  \newtheorem{theorem}{Theorem}[section]
    \newtheorem*{theorem*}{Theorem}
\newtheorem*{lemma*}{Lemma}
  \newtheorem{lemma}[theorem]{Lemma}
  \newtheorem{proposition}[theorem]{Proposition}
  \newtheorem{corollary}[theorem]{Corollary}
  \theoremstyle{definition}
  \newtheorem{conjecture}[theorem]{Conjecture}
  \numberwithin{equation}{section}
  \renewcommand{\C}{{\mathbb C}}
  \newcommand{\D}{{\mathbb D}}
  \newcommand{\E}{{\mathsf E}}
  \newcommand{\cW}{{\mathcal{W}}}
\definecolor{apricot}{rgb}{0.98,0.81,0.69}
  \newcommand{\x}{y}
  \newcommand{\bbR}{\mathbb{R}}
  \newcommand{\bbC}{\mathbb{C}}
\author{Jacob S. Chrisiansen$^{1,3}$ and Olof Rubin$^{2}$}
\thanks{$^1$ Centre for Mathematical Sciences, Lund University, Box 118, 22100 Lund, Sweden.
E-mail: jacob\_stordal.christiansen@math.lth.se}
\thanks{$^2$ Centre for Mathematical Sciences, Lund University, Box 118, 22100 Lund, Sweden.
E-mail: olof.rubin@math.lth.se}
\thanks{$^3$ Research supported by VR grant 2023-04054 from the Swedish Research Council and in part by DFF research project 1026-00267B from the Independent Research Fund Denmark.}
\title{Chebyshev polynomials related to Jacobi weights}
\begin{document}
	\maketitle
\begin{abstract}
	We investigate Chebyshev polynomials corresponding to Jacobi weights and determine monotonicity properties of their related Widom factors. This complements work by Bernstein from 1930-31 where the asymptotical behavior of the related Chebyshev norms was established. As a part of the proof, we analyze a Bernstein-type inequality for Jacobi polynomials due to Chow et al. Our findings shed new light on the asymptotical uniform bounds of Jacobi polynomials. We also show a relation between weighted Chebyshev polynomials on the unit circle and Jacobi weighted Chebyshev polynomials on $[-1,1]$. This generalizes work by Lachance et al. In order to complete the picture we provide numerical experiments on the remaining cases that our proof does not cover.
\end{abstract}

\medskip	
	
	{\bf Keywords} {Chebyshev polynomial, Widom factor, Jacobi weight, Jacobi polynomial, Bernstein-type inequality}
	
\medskip	
	
	{\bf Mathematics Subject Classification} {41A50. 30C10. 33C45.}

\section{Introduction}
In an extensive two part analysis of extremal polynomials on the unit interval, found in \cite{bernstein30,bernstein31}, S.\;N.\;Bernstein investigates the asymptotic behavior of the quantity
\begin{equation}
	\inf_{a_1,\dotsc,a_n}\sup_{x\in [-1,1]}w(x)\left|\prod_{k=1}^{n}(x-a_k)\right|
	\label{eq:chebyshev_norm}
\end{equation}
as $n\rightarrow \infty$ for a variety of different conditions on the weight function $w:[-1,1]\rightarrow [0,\infty)$. If $w$ is bounded and strictly positive on a set consisting of at least $n$ points on $[-1,1]$, there exists a unique set of nodes $\{a_1^\ast,\dotsc,a_n^\ast\}$, all situated in $[-1,1]$, such that
\begin{equation}
	\sup_{x\in [-1,1]}w(x)\left|\prod_{k=1}^{n}(x-a_k^\ast)\right| =\inf_{a_1,\dotsc,a_n}\sup_{x\in [-1,1]}w(x)\left|\prod_{k=1}^{n}(x-a_k)\right|,
\end{equation}
see for instance \cite{achieser56, lorentz86, smirnov-lebedev68}. In other words, the infimum of \eqref{eq:chebyshev_norm} can be replaced by a minimum.
The polynomial 
\begin{equation}
	T_n^{w}(x) := \prod_{k=1}^{n}(x-a_k^\ast)
	\label{eq:chebyshev_polynomial}
\end{equation}
is the weighted Chebyshev polynomial of degree $n$ corresponding to the weight function $w$ and it is the unique monic polynomial of degree $n$ minimizing \eqref{eq:chebyshev_norm}. Under the assumption that the weight function is continuous, the minimizer of \eqref{eq:chebyshev_norm} can be characterized by the so-called alternation property. That is to say, a monic polynomial $P$ of degree $n$ is equal to $T_n^w$ (and thus minimizes \eqref{eq:chebyshev_norm}) if and only if there exists $n+1$ points $-1\leq x_0<\dotsc<x_{n}\leq 1$ such that
\begin{equation}
	P_n(x_j)w(x_j) = (-1)^{n-j}\max_{x\in [-1,1]}w(x)|P(x)|.
	\label{eq:alternation}
\end{equation}
This result -- which is central to the study of best approximations on the real line -- can be found in many works on approximation theory, see for instance \cite{lorentz86, achieser56, christiansen-simon-zinchenko-I, novello-schiefermayr-zinchenko21}.

 The study of \eqref{eq:chebyshev_norm} dates back to the work of P.\;L.\;Chebyshev 
 \cite{chebyshev54,chebyshev59} who considered weight functions of the form $w(x) = 1/P(x)$ where $P$ is a polynomial which is strictly positive on the interval $[-1,1]$. Later A.\;A.\;Markov \cite{markov84} extended these results to allow for weight functions of the form $w(x) = 1/\sqrt{P(x)}$, where again $P$ is a polynomial which is assumed to be strictly positive on $[-1,1]$. It should be stressed that both Chebyshev and Markov provided explicit formulas for the minimizer $T_n^{w}$ in these cases in terms of the polynomial $P$. These formulas can be conveniently found in Achiezer's monograph \cite[Appendix A]{achieser56}.

The considerations in \cite{bernstein30,bernstein31} were different from those of Chebyshev and Markov as Bernstein considered the asymptotic behavior of \eqref{eq:chebyshev_norm} as $n\rightarrow \infty$ for a broad family of weights, not restricting himself to continuous ones. In particular, writing $f(n)\sim g(n)$ as $n\to\infty$ if $f(n)/g(n)\to 1$ as $n\to\infty$, he obtained the following result. 
\begin{theorem}[Bernstein \cite{bernstein31}]
	Let $b_k\in [-1,1]$ and $s_k\in \bbR$ for $k=1,\dotsc,m$, and let $w_0:[-1,1]\rightarrow (0,\infty)$ be a Riemann integrable function such that there exists a value $M>1$ for which $1/M<w_0(x)<M$ holds for every $x\in [-1,1]$. Consider the weight function given by 
	\[w(x) = w_0(x)\prod_{k=1}^{m}|x-b_k|^{s_k}.\]
	Under the above conditions, we have that 
	\begin{equation}
		\min_{a_1,\dotsc,a_n}\sup_{x\in [-1,1]}w(x)\left|\prod_{k=1}^{n}(x-a_k)\right|\sim 2^{1-n}\exp\left\{\frac{1}{\pi}\int_{-1}^{1}\frac{\log w(x)}{\sqrt{1-x^2}}dx\right\}
		\label{eq:bernstein_formula}
	\end{equation}
	as $n\rightarrow \infty$.
	\label{thm:bernstein}
\end{theorem}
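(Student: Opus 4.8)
The plan is to transfer the problem to the unit circle via the Joukowski map, where the constant $2$ and the exponential factor become transparent: $2$ comes from a self‑reciprocity constraint and the exponential factor from the Szegő function. Write $E=[-1,1]$ and let $\mu_E=\frac{dx}{\pi\sqrt{1-x^2}}$ be its equilibrium measure, so $\Cap(E)=\tfrac12$ and the right‑hand side of \eqref{eq:bernstein_formula} is $2\,\Cap(E)^n\exp\!\big(\int_E\log w\,d\mu_E\big)$. Under $x=\cos\theta=\tfrac12(u+u^{-1})$ a monic $P_n$ corresponds to $Q(u):=2^nu^nP_n\big(\tfrac12(u+u^{-1})\big)=\prod_{k}(u^2-2a_ku+1)$, a polynomial of degree $2n$ with leading coefficient $1$ and $Q(0)=1$ (a ``self‑reciprocal'' polynomial); since $|Q(e^{i\theta})|=2^n|P_n(\cos\theta)|$ and, by the substitution $x=\cos\theta$, $\int_E\log w\,d\mu_E=\tfrac1{2\pi}\int_0^{2\pi}\log W(e^{i\theta})\,d\theta$ with $W(e^{i\theta}):=w(\cos\theta)$, the theorem is equivalent to
$$\min\Big\{\|W Q\|_{L^\infty(\mathbb T)}:Q\text{ self-reciprocal, monic, }\deg Q=2n,\ Q(0)=1\Big\}\ \sim\ 2\exp\!\Big(\tfrac1{2\pi}\int_0^{2\pi}\log W(e^{i\theta})\,d\theta\Big).$$
Here $W$ is even and its only singularities, coming from the factors $|x-b_k|^{s_k}$, are log‑integrable, so $\log W\in L^1(\mathbb T)$.

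The next step is to bring in the Szegő (outer) function $D(u)=\exp\!\big(\frac1{4\pi}\int\frac{e^{i\theta}+u}{e^{i\theta}-u}\log W(\theta)\,d\theta\big)$: analytic and zero‑free in $\mathbb D$, real‑symmetric (as $W$ is even), with $|D_*|^2=W$ a.e.\ on $\mathbb T$ and $\gamma:=D(0)^2=\exp\!\big(\tfrac1{2\pi}\int\log W\big)$, so the target constant is $2\gamma$. Set $\phi:=D(0)/D$, so $\phi(0)=1$, $\phi^2$ is outer with $\phi^2(0)=1$, and $|\phi^2_*|=\gamma/W$ on $\mathbb T$. For any polynomial $g$ of degree $\le n$ with real coefficients and $g(0)=1$, the cosine polynomial $R(\theta):=2\operatorname{Re}\!\big(e^{in\theta}g(e^{-i\theta})\big)=2\sum_{j=0}^{n}g_j\cos((n-j)\theta)$ has top coefficient $2$, hence equals $2^nP_n(\cos\theta)$ for a monic $P_n$, and $W(\theta)R(\theta)=2\operatorname{Re}\!\big(W(\theta)e^{in\theta}g(e^{-i\theta})\big)$. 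Choosing $g=g_n$ to be a suitable polynomial approximation of $\phi^2$ (a truncation, or better a smoothed partial sum) makes $W(\theta)e^{in\theta}g_n(e^{-i\theta})\approx\gamma\,e^{i(n\theta+\beta(\theta))}$ with $\beta=\arg\phi^2_*$, whence $WR_n\approx2\gamma\cos(n\theta+\beta(\theta))$. From this one reads off both the upper bound $\|WR_n\|_{L^\infty}\le(1+o(1))\,2\gamma$ and — because $\beta$ has bounded variation away from the $b_k$, so (with care near the $b_k$) the phase $n\theta+\beta(\theta)$ still runs through $n+1$ consecutive half‑periods — the fact that $WR_n$ attains values of alternating sign and magnitude $\ge(1-o(1))\,2\gamma$ at $n+1$ points. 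Since a nonzero difference of two competing self‑reciprocal polynomials is, after removing the factor $u^n$, a cosine polynomial of degree $\le n-1$, the classical Chebyshev zero‑counting argument then forces every competitor to have norm $\ge(1-o(1))\,2\gamma$. Combining the two inequalities gives $2^nt_n(w)\to2\gamma$, i.e.\ the assertion.

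The delicate point — and the main obstacle — is the polynomialization of $\phi^2$ near the singular points $b_k$: $\phi^2$ is not in the disc algebra once some $s_k$ is large, its Taylor partial sums overshoot near $b_k$, and one must choose $g_n$ so that $W|g_n|$ does not exceed $(1+o(1))\gamma$ there (e.g.\ by inserting a slowly growing extra zero of $P_n$ at $b_k$, or by a modulus‑of‑continuity–aware approximation keyed to the explicit Szegő function of a Jacobi‑type factor) while keeping $g_n(0)=1$ and $\deg g_n\le n$; this is precisely the estimate that, later in the paper, is packaged through the Bernstein‑type inequality of Chow et al. One must also treat a pole of $W$ (a factor with $s_k<0$) by first extracting from $P_n$ the rigid power $(x-b_k)^{\lceil|s_k|\rceil}$ that boundedness of $wP_n$ forces, and running the construction on the remaining, now vanishing, weight. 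Finally the general case $w=w_0\prod_k|x-b_k|^{s_k}$ reduces to the above by approximating the Riemann‑integrable factor $w_0$ from above and below by continuous (then smooth) weights bounded between $1/M$ and $M$, for which $D$ and $1/D$ are well behaved, and passing to the limit in both the upper and the lower bound.
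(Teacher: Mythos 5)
Your plan — pass to the unit circle, build a near-extremal polynomial by polynomializing the square of the Szegő function so that $W R_n \approx 2\gamma\cos(n\theta+\beta(\theta))$, and get the matching lower bound by a de la Vallée-Poussin zero-counting argument at the near-alternation points — is a legitimate strategy, and for weights that are continuous and bounded away from zero it can be carried through; at that level it runs parallel to Bernstein's own first step (which the paper only cites), where the orthogonal polynomials for $w_0^2/\sqrt{1-x^2}$ are shown to be asymptotically alternating and de la Vallée-Poussin's theorem is invoked, cf.\ \eqref{eq:orthogonal_asymptotic_minimal}. Your reduction of the Riemann-integrable factor $w_0$ by continuous upper/lower approximants, and the extraction of forced zeros at poles ($s_k<0$), are also fine as sketched.

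The genuine gap is exactly the step you flag and then do not prove: constructing $g_n$ with $\deg g_n\le n$, $g_n(0)=1$ and $W|g_n|\le(1+o(1))\gamma$ uniformly up to the singular points $b_k$, and — crucially for the lower bound — retaining a \emph{full} set of $n+1$ asymptotic alternation points in the presence of the zeros/poles of $W$. ``With care near the $b_k$'' conceals the entire difficulty: near a zero of $w$ the weighted polynomial is small, so alternation points are lost there, and losing even a bounded number of them breaks the de la Vallée-Poussin comparison (the difference with a monic competitor has degree at most $n-1$, so you need all $n$ forced sign changes). This is precisely why Bernstein abandons the alternation/orthogonal-polynomial mechanism once the factors $|x-b_k|^{s_k}$ enter — the paper states that this connection ``gets lost'' — and instead proves \eqref{eq:bernstein_formula} by sandwiching each factor $|x-b_k|^{s_k}$ between nonvanishing weights with small multiplicative error, which bypasses alternation altogether. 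Moreover, your proposed crutch cannot fill the hole: the Chow--Gatteschi--Wong inequality (Theorem \ref{thm:chow-gatteschi-wong}) concerns only the pure Jacobi weight $(1-x)^{\alpha}(1+x)^{\beta}$ with $-1/2\le\alpha,\beta\le 1/2$ — endpoint singularities and a restricted exponent range — says nothing about interior points $b_k\in(-1,1)$ or arbitrary $s_k\in\mathbb{R}$, and is used in the paper for the sharper non-asymptotic statements (Theorem \ref{thm:jacobi_inequality} and case (2) of Theorem \ref{main:thm}), not for Theorem \ref{thm:bernstein}. Until you either produce an alternation-preserving approximation near the $b_k$ or switch to the sandwich argument for the singular factors, the proof is incomplete at its central point.
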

This result is shown in two steps. Bernstein initially considers weight functions $w_0$ as in Theorem \ref{thm:bernstein} and finds that if $c_1^\ast,\dotsc,c_n^\ast$ are the unique nodes satisfying
\begin{equation}
	\int_{-1}^{1}\left|\prod_{k=1}^{n}(x-c_k^\ast)\right|^2\frac{w_0(x)^2}{\sqrt{1-x^2}}dx = \min_{c_1,\dotsc,c_n}\int_{-1}^{1}\left|\prod_{k=1}^{n}(x-c_k)\right|^2\frac{w_0(x)^2}{\sqrt{1-x^2}}dx,
	\label{eq:orthogonal_polynomials}
\end{equation}
then the weighted expression
\[w_0(x)\prod_{k=1}^{n}(x-c_k^\ast)\]
is asymptotically alternating on $[-1,1]$, see \cite{bernstein30}. As it turns out, this information suffices to apply a result due to de la Vall\'{e}e-Poussin found in \cite{delavallepoussin10} and ensure that 
\begin{equation}
	\max_{x\in [-1,1]}\left|w_0(x)\prod_{k=1}^{n}(x-c_k^\ast)\right|\sim \min_{a_1,\dotsc,a_n}\max_{x\in [-1,1]}w_0(x)\left|\prod_{k=1}^{n}(x-a_k)\right|
	\label{eq:orthogonal_asymptotic_minimal}
\end{equation}
as $n\rightarrow \infty$. Note that 
\[\prod_{k=1}^{n}(x-c_k^\ast)\] is nothing but the monic orthogonal polynomial of degree $n$ corresponding to the weight function $w_0(x)^2/\sqrt{1-x^2}$ on $[-1,1]$. By showing that
\[\max_{x\in [-1,1]}\left|w_0(x)\prod_{k=1}^{n}(x-c_k^\ast)\right|\sim 2^{1-n}\exp\left\{\frac{1}{\pi}\int_{-1}^{1}\frac{\log w_0(x)}{\sqrt{1-x^2}}dx\right\}\]
as $n\rightarrow \infty$, Bernstein obtains \eqref{eq:bernstein_formula} from \eqref{eq:orthogonal_asymptotic_minimal}. Consequently, Theorem \ref{thm:bernstein} is verified in this case. Proceeding from this, Bernstein extends the analysis by allowing for vanishing factors of the form $|x-b_k|^{s_k}$ to be introduced to the weight where $b_k\in [-1,1]$ and $s_k\in \mathbb{R}$, see \cite{bernstein31}. To show this, he uses a clever technique of bounding the factors $|x-b_k|^{s_k}$ from above and below. However, the connection to the maximal deviation of the corresponding orthogonal polynomials as in \eqref{eq:orthogonal_asymptotic_minimal} gets lost. 

To illustrate why this connection becomes more delicate when zeros are added to the weight function, he provides the explicit example of the monic Jacobi polynomials. Following \cite[Chapter IV]{szego75}, we let $P_n^{(\alpha,\beta)}$ denote the classical Jacobi polynomials with parameters $\alpha,\beta >-1.$ These are uniquely defined by the property that $P_n^{(\alpha,\beta)}$ is a polynomial of exact degree $n$ and
\begin{equation}
	\int_{-1}^{1}(1-x)^\alpha(1+x)^\beta P_m^{(\alpha,\beta)}(x)P_n^{(\alpha,\beta)}(x)dx = \frac{2^{\alpha+\beta+1}}{2n+\alpha+\beta+1}\frac{\Gamma(n+\alpha+1)\Gamma(n+\beta+1)}{\Gamma(n+\alpha+\beta+1)n!}\delta_{nm},
\end{equation}
where $\delta_{nm}$ is the Kronecker delta.
Since all zeros of the Jacobi polynomials reside in $[-1,1]$ and since 
\[P_n^{(\alpha,\beta)}(x) = 2^{-n}{{2n+\alpha+\beta}\choose{n}}x^n+\mbox{lower order terms},\]
we find that
\begin{equation}
	2^nP_n^{(\alpha,\beta)}(x)\Big/{{2n+\alpha+\beta}\choose{n}} = \prod_{k=1}^{n}(x-\cos \psi_k^\ast)
	\label{eq:monic_jacobi}
\end{equation}
for some values $\psi_k^\ast\in [0,\pi]$. Bernstein's analysis provides the following result.
\begin{theorem}[Bernstein \cite{bernstein31}]
	Let $\rho_\alpha = \frac{\alpha}{2}+\frac{1}{4}$ and $\rho_\beta = \frac{\beta}{2}+\frac{1}{4}$,  and let $\psi_k^\ast$ be associated with $P_n^{(\alpha,\beta)}$ as in \eqref{eq:monic_jacobi}. 
	\begin{enumerate}
		\item If 
		\begin{equation}
			0\leq \rho_\alpha\leq 1/2\quad \text{and}\quad 0\leq \rho_\beta\leq 1/2
			\label{eq:condition_jacobi_1}
		\end{equation}
		both hold, then
		\begin{equation}
			\max_{x\in [-1,1]}(1-x)^{\rho_\alpha}(1+x)^{\rho_\beta}\left|\prod_{k=1}^{n}(x-\cos\psi_k^\ast)\right|\sim 2^{1-n-\rho_\alpha-\rho_\beta}
			\label{eq:jacobi_max_asymptotic}
		\end{equation}
		as $n\rightarrow \infty$.
		\item If one of the conditions in \eqref{eq:condition_jacobi_1} fails to hold, then so does \eqref{eq:jacobi_max_asymptotic}.
	\end{enumerate}
	\label{thm:bernstein_jacobi}
\end{theorem}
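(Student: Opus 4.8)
To prove Theorem~\ref{thm:bernstein_jacobi}, the plan is to pass to the trigonometric parametrization and reduce both parts to the asymptotics of the weighted Jacobi polynomial
\[
\widetilde P_n(\theta):=\Bigl(\sin\tfrac{\theta}{2}\Bigr)^{\alpha+\frac12}\Bigl(\cos\tfrac{\theta}{2}\Bigr)^{\beta+\frac12}\bigl|P_n^{(\alpha,\beta)}(\cos\theta)\bigr|,\qquad 0\le\theta\le\pi .
\]
Writing $x=\cos\theta$ and using $1-x=2\sin^2(\theta/2)$, $1+x=2\cos^2(\theta/2)$ together with \eqref{eq:monic_jacobi}, one gets
\[
(1-x)^{\rho_\alpha}(1+x)^{\rho_\beta}\Bigl|\prod_{k=1}^{n}(x-\cos\psi_k^\ast)\Bigr|=\frac{2^{\,n+\rho_\alpha+\rho_\beta}}{\binom{2n+\alpha+\beta}{n}}\,\widetilde P_n(\theta),
\]
and Stirling's formula gives $\binom{2n+\alpha+\beta}{n}\sim 2^{2n+\alpha+\beta}/\sqrt{\pi n}$. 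Since $2\rho_\alpha+2\rho_\beta=\alpha+\beta+1$, the prefactor is $\sim 2^{\,1-n-\rho_\alpha-\rho_\beta}\sqrt{\pi n}$, so \eqref{eq:jacobi_max_asymptotic} is equivalent to
\[
M_n:=\max_{0\le\theta\le\pi}\widetilde P_n(\theta)\ \sim\ \frac{1}{\sqrt{\pi n}}\qquad(n\to\infty),
\]
while part (2) amounts to the assertion that this equivalence breaks down as soon as $\rho_\alpha$ or $\rho_\beta$ leaves $[0,1/2]$.

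The lower bound $M_n\ge(1-o(1))/\sqrt{\pi n}$ holds for all $\alpha,\beta>-1$ and follows at once from the bulk asymptotics of Jacobi polynomials \cite{szego75}: uniformly for $\theta$ in a fixed compact subinterval of $(0,\pi)$,
\[
\widetilde P_n(\theta)=\frac{1}{\sqrt{\pi n}}\,\bigl|\cos(N\theta+\gamma)\bigr|+O\!\left(n^{-3/2}\right),\qquad N=n+\tfrac{\alpha+\beta+1}{2},\quad \gamma=-\bigl(\alpha+\tfrac12\bigr)\tfrac{\pi}{2},
\]
and $\sup_{\theta\in[\pi/4,\pi/2]}|\cos(N\theta+\gamma)|\to1$. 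All the content therefore lies in the matching upper bound, and this is where the hypotheses on $\rho_\alpha,\rho_\beta$ enter.

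For the upper bound I would combine the bulk formula above (valid once $\theta$ is bounded away from $0$ and $\pi$) with the uniform Hilb-type asymptotics near the endpoints: uniformly for $0<\theta\le\pi-\varepsilon$,
\[
\widetilde P_n(\theta)=\frac{1}{\sqrt{2n}}\,(N\theta)^{1/2}\bigl|J_\alpha(N\theta)\bigr|+O\!\left(n^{-3/2}\right),
\]
and symmetrically near $\theta=\pi$ with $\alpha$, $\beta$ and $N\theta$, $N(\pi-\theta)$ interchanged; since the two ranges overlap in the bulk they cover $[0,\pi]$. (This is exactly the circle of estimates surrounding the Bernstein-type inequality of Chow et al.\ referred to in the abstract, which it is natural to re-examine here.) These formulas give
\[
M_n\ \le\ \frac{1+o(1)}{\sqrt{2n}}\,\max\{\mathfrak b_\alpha,\mathfrak b_\beta\},\qquad \mathfrak b_\nu:=\sup_{x\ge0}x^{1/2}\bigl|J_\nu(x)\bigr| .
\]
The decisive analytic input is the sharp Bessel bound $\mathfrak b_\nu=\sqrt{2/\pi}$ for $-\tfrac12\le\nu\le\tfrac12$, together with $\mathfrak b_\nu>\sqrt{2/\pi}$ for $\nu>\tfrac12$ and $\mathfrak b_\nu=+\infty$ for $\nu<-\tfrac12$. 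When $0\le\rho_\alpha\le1/2$ and $0\le\rho_\beta\le1/2$ we have $-\tfrac12\le\alpha,\beta\le\tfrac12$, hence $\mathfrak b_\alpha=\mathfrak b_\beta=\sqrt{2/\pi}$ and $M_n\le(1+o(1))/\sqrt{\pi n}$; combined with the lower bound this proves $M_n\sim1/\sqrt{\pi n}$ and therefore \eqref{eq:jacobi_max_asymptotic}.

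Part (2) then comes from the two failure modes of $\mathfrak b_\nu$. If $\rho_\alpha<0$, i.e.\ $\alpha<-\tfrac12$, then $(1-x)^{\rho_\alpha}\to\infty$ as $x\to1^-$ whereas $\prod_k(1-\cos\psi_k^\ast)>0$ because all zeros of $P_n^{(\alpha,\beta)}$ lie in the open interval; hence the left-hand side of \eqref{eq:jacobi_max_asymptotic} is $+\infty$ and the asymptotic fails trivially, and the same holds for $\rho_\beta<0$ by symmetry. If instead $\rho_\alpha>1/2$, i.e.\ $\alpha>\tfrac12$, then $\mathfrak b_\alpha>\sqrt{2/\pi}$ and this supremum is attained at some finite $x_0>0$; evaluating the Hilb asymptotics at $\theta_n=x_0/N$ gives $M_n\ge(1-o(1))\mathfrak b_\alpha/\sqrt{2n}$, so the ratio in \eqref{eq:jacobi_max_asymptotic} tends to $\mathfrak b_\alpha\sqrt{\pi/2}>1$ rather than to $1$ (and similarly when $\rho_\beta>1/2$). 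The step I expect to be the real obstacle is the rigorous justification of the sharp value $\mathfrak b_\nu=\sqrt{2/\pi}$ on $|\nu|\le\tfrac12$ (and its strict increase beyond), together with the uniform $O(n^{-3/2})$ control needed to push the endpoint expansion through the delicate intermediate regime $1/n\lesssim\theta\lesssim1$, where neither the pure Mehler--Heine limit nor the bulk formula alone is adequate.
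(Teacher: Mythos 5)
Your proposal is essentially correct, but be aware that it necessarily takes a different route from the paper, because the paper does not prove Theorem \ref{thm:bernstein_jacobi} at all: the statement is quoted from Bernstein \cite{bernstein31} (with a translated proof referenced via \cite{christiansen-eichinger-rubin23}), and what the paper itself establishes in this parameter range is the sharper non-asymptotic bound of Theorem \ref{thm:jacobi_inequality}, obtained by feeding the Chow--Gatteschi--Wong inequality (Theorem \ref{thm:chow-gatteschi-wong}) into the monotonicity analysis of $M_n(\alpha,\beta)$ in Lemma \ref{lem:jacobi_estimate}, while the asymptotics themselves are imported from Theorem \ref{thm:bernstein}. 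Your reduction is carried out correctly (the Stirling computation $\binom{2n+\alpha+\beta}{n}\sim 2^{2n+\alpha+\beta}/\sqrt{\pi n}$ and the identity $\alpha+\beta+1=2\rho_\alpha+2\rho_\beta$ do give the prefactor $2^{1-n-\rho_\alpha-\rho_\beta}\sqrt{\pi n}$), and your strategy of proving $\max_\theta \widetilde P_n(\theta)\sim(\pi n)^{-1/2}$ via Szeg\H{o}'s bulk asymptotics (lower bound, all $\alpha,\beta>-1$) and the uniform Hilb-type formula plus the Bessel bound (upper bound) is the classical mechanism underlying both Bernstein's result and the CGW inequality; it buys a self-contained proof of \emph{both} parts, including the quantitative failure in part (2) ($\liminf$ of the normalized maximum at least $\sqrt{\pi/2}\,\sup_{x\ge0}\sqrt{x}\,|J_\alpha(x)|>1$ when $\alpha>1/2$, and an infinite supremum when $\alpha<-1/2$ since $P_n^{(\alpha,\beta)}(1)\neq0$), whereas the paper's route yields the for-every-$n$ inequality \eqref{eq:jacobi_inequality} that an asymptotic argument cannot deliver. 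The two ingredients you flag as the ``real obstacle'' are in fact classical and citable, so there is no genuine gap: the bound $\sup_{x\ge0}\sqrt{x}\,|J_\nu(x)|=\sqrt{2/\pi}$ for $|\nu|\le\tfrac12$, its strict failure for $\nu>\tfrac12$, and the blow-up for $\nu<-\tfrac12$ all follow from the monotonicity of $x\bigl(J_\nu(x)^2+Y_\nu(x)^2\bigr)$ (Watson; Szeg\H{o} \cite{szego75}), and the uniformity through the intermediate regime $1/n\lesssim\theta\lesssim1$ is exactly what Szeg\H{o}'s Hilb-type theorem supplies, its two error regimes $\theta^{1/2}O(n^{-3/2})$ and $\theta^{\alpha+2}O(n^{\alpha})$ both becoming a uniform $O(n^{-3/2})$ after multiplication by the weight $(\sin\tfrac{\theta}{2})^{\alpha+1/2}(\cos\tfrac{\theta}{2})^{\beta+1/2}$; only these standard references need to be filled in to make your sketch a complete proof.
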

Note that by replacing $w_0$ with $(1-x)^{\rho_\alpha}(1+x)^{\rho_\beta}$ in \eqref{eq:orthogonal_polynomials}, the corresponding weight function, to which the orthogonal polynomials are associated, is nothing but
\begin{equation}
	\frac{[(1-x)^{\rho_\alpha}(1+x)^{\rho_\beta}]^2}{\sqrt{1-x^2}} = (1-x)^\alpha(1+x)^\beta.
\end{equation}
It is a simple matter to verify that
\begin{equation}
	2^{1-n}\exp\left\{\frac{1}{\pi}\int_{-1}^{1}\frac{\log [(1-x)^{\rho_\alpha}(1+x)^{\rho_\beta}]}{\sqrt{1-x^2}}dx\right\} = 2^{1-n-\rho_\alpha-\rho_\beta},
	\label{eq:asymptotic_jacobi_cheybshev_norm}
\end{equation}
see for instance \cite{christiansen-eichinger-rubin23} where also a translation of the proof of Theorem \ref{thm:bernstein} appears. As such, we see that the monic Jacobi polynomials defined in \eqref{eq:monic_jacobi} are close to minimal with respect to
\begin{equation}
	\min_{a_1,\dotsc,a_n}\max_{x\in [-1,1]}(1-x)^{\rho_\alpha}(1+x)^{\rho_\beta}\left|\prod_{k=1}^{n}(x-a_k)\right|
	\label{eq:chebyshev_norm_jacobi}
\end{equation}
precisely when \eqref{eq:condition_jacobi_1} holds. 

As a partial result, we wish to carry out a detailed analysis of the quantity 
\[\max_{x\in [-1,1]}(1-x)^{\rho_\alpha}(1+x)^{\rho_\beta}\left|\prod_{k=1}^{n}(x-\cos\psi_k^\ast)\right|\]
for the parameters satisfying \eqref{eq:condition_jacobi_1}. By carefully manipulating a Bernstein-type inequality due to Chow et\,al. \cite{chow-gatteschi-wong94}, we will establish the following result.
\begin{theorem}
	Let $\rho_\alpha = \frac{\alpha}{2}+\frac{1}{4}$ and $\rho_\beta = \frac{\beta}{2}+\frac{1}{4}$, and let $\psi_k^\ast$ be as in \eqref{eq:monic_jacobi}. If \eqref{eq:condition_jacobi_1} holds, then
	\begin{equation}
		\max_{x\in [-1,1]}(1-x)^{\rho_\alpha}(1+x)^{\rho_\beta}\left|\prod_{k=1}^{n}(x-\cos\psi_k^\ast)\right|\leq 2^{1-n-\rho_\alpha-\rho_\beta}
		\label{eq:jacobi_inequality}
	\end{equation}
	for every $n$ with equality if and only if $\rho_\alpha,\rho_\beta\in \{0,1/2\}$.
	\label{thm:jacobi_inequality}
\end{theorem}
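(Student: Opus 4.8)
The plan is to transfer the problem to the unit circle via $x=\cos\theta$, recognise the monic polynomial in \eqref{eq:jacobi_inequality} as a normalised Jacobi polynomial, squeeze the resulting supremum with the Bernstein-type inequality of Chow--Gatteschi--Wong, and reduce the endgame to a comparison of products of Gamma functions. First I would rewrite the left-hand side of \eqref{eq:jacobi_inequality}: by \eqref{eq:monic_jacobi} the monic polynomial $\prod_k(x-\cos\psi_k^\ast)$ equals $2^{n}P_n^{(\alpha,\beta)}\big/\binom{2n+\alpha+\beta}{n}$, and for $x=\cos\theta$ we have $(1-x)^{\rho_\alpha}(1+x)^{\rho_\beta}=2^{(\alpha+\beta+1)/2}(\sin\tfrac\theta2)^{\alpha+1/2}(\cos\tfrac\theta2)^{\beta+1/2}$ since $2\rho_\alpha=\alpha+\tfrac12$ and $2\rho_\beta=\beta+\tfrac12$. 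Hence \eqref{eq:jacobi_inequality} is equivalent to
\[
	\cM_n:=\max_{0\le\theta\le\pi}(\sin\tfrac\theta2)^{\alpha+1/2}(\cos\tfrac\theta2)^{\beta+1/2}\bigl|P_n^{(\alpha,\beta)}(\cos\theta)\bigr|\le 2^{-2n-\alpha-\beta}\binom{2n+\alpha+\beta}{n},
\]
and Legendre's duplication formula applied to $\Gamma(2n+\alpha+\beta+1)$ turns the right-hand side into $\dfrac{\Gamma\!\bigl(n+\frac{\alpha+\beta+1}{2}\bigr)\,\Gamma\!\bigl(n+\frac{\alpha+\beta}{2}+1\bigr)}{\sqrt\pi\,\Gamma(n+\alpha+\beta+1)\,n!}$, which depends on $\alpha,\beta$ only through $\alpha+\beta$. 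Observe also that \eqref{eq:condition_jacobi_1} is precisely the statement $-\tfrac12\le\alpha,\beta\le\tfrac12$.

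Next I would invoke the inequality of Chow--Gatteschi--Wong \cite{chow-gatteschi-wong94}, which bounds $(\sin\tfrac\theta2)^{\alpha+1/2}(\cos\tfrac\theta2)^{\beta+1/2}|P_n^{(\alpha,\beta)}(\cos\theta)|$, uniformly in $\theta$, by an explicit quantity assembled from $\sqrt{2/\pi}$, a power of $n+\tfrac{\alpha+\beta+1}{2}$, and a ratio of Gamma values in $n$ and $\max(\alpha,\beta)$. The task then becomes to show that, under \eqref{eq:condition_jacobi_1}, this bound does not exceed $2^{-2n-\alpha-\beta}\binom{2n+\alpha+\beta}{n}$. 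Clearing the $\sqrt\pi$'s and powers of $2$ with the duplication formula, this should collapse to an elementary-looking inequality of the type $\prod_{j=1}^{n}\frac{n+a+j}{n+b+j}\ge 2^{\,a-b}$ --- equivalently, a monotonicity/log-convexity statement for the quotients $\Gamma(x+a)/\Gamma(x+b)$ of Gautschi--Kershaw type --- that holds in the range $|\alpha|,|\beta|\le\tfrac12$ and only there. The hard part will be this Gamma-quotient comparison: the two sides agree to leading order in $n$ for all admissible $(\alpha,\beta)$, so the estimate has to be carried through with the lower-order terms tracked carefully, and the hypothesis \eqref{eq:condition_jacobi_1} must be used in an essential way --- it is the borderline case, in that for $\max(\alpha,\beta)>\tfrac12$ the inequality \eqref{eq:jacobi_inequality} fails for large $n$, consistent with part (2) of Theorem~\ref{thm:bernstein_jacobi}.

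Finally, for the equality characterisation I would follow the chain $\cM_n\le(\text{CGW bound})\le 2^{-2n-\alpha-\beta}\binom{2n+\alpha+\beta}{n}$ and ask when it is tight. The first inequality is an equality only when $P_n^{(\alpha,\beta)}(\cos\theta)$ is, up to the reciprocal of the weight factor, a genuine trigonometric polynomial --- that is, exactly when $\alpha,\beta\in\{-\tfrac12,\tfrac12\}$, the four cases in which $(\sin\tfrac\theta2)^{\alpha+1/2}(\cos\tfrac\theta2)^{\beta+1/2}P_n^{(\alpha,\beta)}(\cos\theta)$ is a constant multiple of one of $\cos n\theta$, $\sin((n+1)\theta)$, $\cos((n+\tfrac12)\theta)$, $\sin((n+\tfrac12)\theta)$ and hence attains $\pm$ its supremum. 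In each of these four cases one computes the constant explicitly (for instance from $P_n^{(\alpha,\beta)}(1)=\binom{n+\alpha}{n}$, or by evaluating at $\theta=\pi$) and checks that the Chow--Gatteschi--Wong bound is attained and equals $2^{-2n-\alpha-\beta}\binom{2n+\alpha+\beta}{n}$, so that \eqref{eq:jacobi_inequality} holds with equality there; this also confirms the computation of the right-hand side in terms of $\rho_\alpha,\rho_\beta$. For every other admissible pair $(\alpha,\beta)$ at least one of the two inequalities in the chain is strict --- the oscillatory (trigonometric/Bessel) factor underlying the Chow--Gatteschi--Wong bound is then bounded strictly away from its extreme value, or the Gamma comparison of the second step is strict --- so \eqref{eq:jacobi_inequality} is strict, which completes the proof.
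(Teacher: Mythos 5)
Your overall strategy is the same as the paper's: pass to $x=\cos\theta$, identify the monic polynomial with $2^nP_n^{(\alpha,\beta)}/\binom{2n+\alpha+\beta}{n}$, apply the Chow--Gatteschi--Wong inequality, and reduce to a comparison of Gamma-function expressions. The problem is that the decisive step is not actually carried out. After CGW and Legendre duplication, what must be shown is that for all $n$ and all $-1/2\le\alpha,\beta\le1/2$,
\begin{equation*}
	M_n(\alpha,\beta)\;=\;2^{\frac{1-\alpha-\beta}{2}}\,
	\frac{\Gamma(n+q+1)\,\Gamma(n+\alpha+\beta+1)}
	{\bigl(n+\tfrac{\alpha+\beta+1}{2}\bigr)^{q+\frac12}\,\Gamma\bigl(n+\tfrac{\alpha+\beta+1}{2}\bigr)\,\Gamma\bigl(n+\tfrac{\alpha+\beta}{2}+1\bigr)}
	\;\le\;2^{1-\rho_\alpha-\rho_\beta},
\end{equation*}
with strict inequality unless $|\alpha|=|\beta|=1/2$. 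This is not of the clean telescoping form $\prod_{j=1}^{n}\frac{n+a+j}{n+b+j}\ge 2^{a-b}$ that you suggest, because one of the Gamma quotients is replaced by the power $\bigl(n+\tfrac{\alpha+\beta+1}{2}\bigr)^{q+\frac12}$; it is a Gautschi--Kershaw-type bound whose constant is exactly attained in the limit $n\to\infty$, so no leading-order asymptotic argument can give it for finite $n$ --- the lower-order terms are the whole point. The paper proves it by showing that $M_n(\alpha,\beta)$ increases monotonically to its limit $2^{1-\rho_\alpha-\rho_\beta}$: the ratio $M_{n+1}/M_n$ is written as $f(n)$ for an explicit rational-power function $f$, and the logarithmic derivative of $f$ is shown to be negative for $x>0$ via a sign analysis of a quadratic numerator $c_2x^2+c_1x+c_0$, where proving $c_0,c_1,c_2<0$ off the corner cases occupies a separate appendix lemma. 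Your proposal labels precisely this computation ``the hard part'' and leaves it open, so neither the inequality \eqref{eq:jacobi_inequality} nor its strictness off the corners is established.

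A secondary issue concerns the equality characterisation. Your ``only if'' argument relies on the claim that, for fixed $n$, the CGW inequality is strict whenever $(\alpha,\beta)$ is not a corner, or else on strictness of the Gamma comparison; the first claim is not proved (and is not needed), while the second is exactly the missing piece above --- in the paper the strict monotone increase of $M_n$ to its limit yields $M_n<2^{1-\rho_\alpha-\rho_\beta}$ and hence strictness in \eqref{eq:jacobi_inequality} off the corners. Your ``if'' direction, verifying equality at $\rho_\alpha,\rho_\beta\in\{0,1/2\}$ through the explicit trigonometric forms of the four classical Chebyshev polynomials, is fine and matches what the paper does.
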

By combining Theorem \ref{thm:jacobi_inequality} with Theorem \ref{thm:bernstein_jacobi}, we see that 
\begin{align}
	\begin{split}
		\lim_{n\rightarrow \infty }2^n\max_{x\in [-1,1]}(1-x)^{\rho_\alpha}(1+x)^{\rho_\beta}\left|\prod_{k=1}^{n}(x-\cos\psi_k^\ast)\right|\\  = \sup_n2^n\max_{x\in [-1,1]}(1-x)^{\rho_\alpha}2(1+x)^{\rho_\beta}\left|\prod_{k=1}^{n}(x-\cos\psi_k^\ast)\right|= 2^{1-\rho_\alpha-\rho_\beta}
	\end{split}
\end{align}
which shows that the convergence is from below.
The question of obtaining upper bounds for the quantity
\begin{equation}
	\max_{x\in [-1,1]}(1-x)^{a}(1+x)^{b}\left|\prod_{k=1}^{n}P_n^{(\alpha,\beta)}(x)\right|
\end{equation}
for different values of $a,b,\alpha,\beta$ is well studied, see for instance \cite{erdelyi-magnus-nevai94,chow-gatteschi-wong94, koornwinder-kostenko-teschl18} and has applications to e.g. representation theory and the study of Schr\"{o}dinger operators. Our main interest is the applications that such a result has to the corresponding Chebyshev problem \eqref{eq:chebyshev_norm_jacobi}. To state our main result, we introduce the Widom factor $\cW_n(\rho_\alpha,\rho_\beta)$ defined by
\begin{equation}
	\cW_n(\rho_\alpha,\rho_\beta):= 2^{n}\min_{a_1,\dotsc,a_n}\max_{x\in [-1,1]}(1-x)^{\rho_\alpha}(1+x)^{\rho_\beta}\left|\prod_{k=1}^{n}(x-a_k)\right|.
	\label{eq:widom_factor}
\end{equation}
The choice of naming in honour of H. Widom -- who in \cite{widom69} considered Chebyshev polynomials corresponding to a plethora of compact sets in the complex plane -- stems from \cite{goncharov-hatinoglu15}. The considerations of Widom factors in the literature are plentiful, see for instance \cite{christiansen-simon-zinchenko-I,christiansen-simon-zinchenko-II,christiansen-simon-zinchenko-III,christiansen-simon-zinchenko-IV,alpan22, alpan-zinchenko20,goncharov-hatinoglu15,christiansen-simon-zinchenko-review, novello-schiefermayr-zinchenko21, alpan-zinchenko24}. In this article we wish to carry out a detailed study of the behavior of $\cW_n(\rho_\alpha,\rho_\beta)$ for different parameter values.
\begin{theorem}
	For any value of the parameters $\rho_\alpha,\rho_\beta \geq 0$, we have that
		\[\cW_n(\rho_\alpha,\rho_\beta)\sim 2^{1-\rho_\alpha-\rho_\beta}\]
		as $n\rightarrow \infty$. Furthermore:
		\begin{enumerate}
			\item If $\rho_\alpha,\rho_\beta \in  \{0,1/2\}$, then the quantity $\cW_n(\rho_\alpha,\rho_\beta)$ is constant.
			\item If $\rho_\alpha,\rho_\beta \in [0,1/2]$, then
			\begin{equation}
				\sup_{n}\cW_n(\rho_\alpha,\rho_\beta)= 2^{1-\rho_\alpha-\rho_\beta}.
			\end{equation}
			\item If $\rho_\alpha,\rho_\beta \in \{0\}\cup [1/2,\infty)$, then
			\begin{equation}
				\inf_{n}\cW_n(\rho_\alpha,\rho_\beta) = 2^{1-\rho_\alpha-\rho_\beta},
			\end{equation}
			\begin{equation}
				\sup_n \cW_n(\rho_\alpha,\rho_\beta)\leq \left(\frac{2\rho_\alpha}{\rho_\alpha+\rho_\beta}\right)^{\rho_\alpha}\left(\frac{2\rho_\beta}{\rho_\alpha+\rho_\beta}\right)^{\rho_\beta},
			\end{equation}
			and $\cW_n(\rho_\alpha,\rho_\beta)$ decreases monotonically as $n$ increases.
		\end{enumerate}
		\label{main:thm}
\end{theorem}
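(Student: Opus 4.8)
The plan is to dispose of the overall asymptotics and of parts~(1)--(2) quickly, using the results already in place, and to concentrate the real work on the monotonicity in part~(3).

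For the asymptotics, $\cW_n(\rho_\alpha,\rho_\beta)\sim 2^{1-\rho_\alpha-\rho_\beta}$ (all $\rho_\alpha,\rho_\beta\ge 0$) is Theorem~\ref{thm:bernstein} applied with $w_0\equiv 1$ and the vanishing factors $|x-1|^{\rho_\alpha}$, $|x+1|^{\rho_\beta}$ (omitting either one when its exponent is $0$): formula~\eqref{eq:bernstein_formula}, combined with the evaluation~\eqref{eq:asymptotic_jacobi_cheybshev_norm} and multiplied by $2^n$, is precisely the claim. For part~(1) I would set $x=\cos\theta$ and check that, after monic normalisation, the extremal polynomials for the weights $1$, $\sqrt{1-x^2}$, $(1+x)^{1/2}$ and $(1-x)^{1/2}$ are the monic Chebyshev polynomials of the first, second, third and fourth kinds; in each case the weighted polynomial has the form $c\cos\!\bigl((n+\rho_\alpha+\rho_\beta)\theta-\varphi\bigr)$, so it has constant modulus at its $n+1$ alternation points and~\eqref{eq:alternation} identifies it, and reading off $c$ gives $\cW_n=2,1,\sqrt2,\sqrt2$, each independent of $n$ and equal to $2^{1-\rho_\alpha-\rho_\beta}$. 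For part~(2), the monic polynomial $\prod_{k=1}^n(x-\cos\psi_k^\ast)$ of~\eqref{eq:monic_jacobi} is an admissible competitor in~\eqref{eq:widom_factor}, so under~\eqref{eq:condition_jacobi_1} Theorem~\ref{thm:jacobi_inequality} yields $\cW_n(\rho_\alpha,\rho_\beta)\le 2^n\cdot 2^{1-n-\rho_\alpha-\rho_\beta}=2^{1-\rho_\alpha-\rho_\beta}$ for every $n$; together with the asymptotics this forces $\sup_n\cW_n(\rho_\alpha,\rho_\beta)=2^{1-\rho_\alpha-\rho_\beta}$.

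For part~(3) I would first record the elementary identity
\[
\cW_0(\rho_\alpha,\rho_\beta)=\max_{x\in[-1,1]}(1-x)^{\rho_\alpha}(1+x)^{\rho_\beta}=\Bigl(\tfrac{2\rho_\alpha}{\rho_\alpha+\rho_\beta}\Bigr)^{\rho_\alpha}\Bigl(\tfrac{2\rho_\beta}{\rho_\alpha+\rho_\beta}\Bigr)^{\rho_\beta},
\]
obtained by maximising $\rho_\alpha\log(1-x)+\rho_\beta\log(1+x)$ at $x=(\rho_\beta-\rho_\alpha)/(\rho_\alpha+\rho_\beta)$. Granting the monotonicity $\cW_{n+1}(\rho_\alpha,\rho_\beta)\le\cW_n(\rho_\alpha,\rho_\beta)$, everything else in part~(3) is immediate: $\sup_n\cW_n=\cW_0$ is the asserted upper bound, and $\inf_n\cW_n=\lim_n\cW_n=2^{1-\rho_\alpha-\rho_\beta}$ by the asymptotics. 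So the whole of part~(3) reduces to proving this monotonicity, and my plan for that is to transfer the problem to the unit circle. Under $x=\tfrac12(z+z^{-1})$ a monic polynomial $\prod_{k=1}^n(x-a_k)$ corresponds, up to the unimodular factor $2^{-n}z^{-n}$, to the self-reciprocal monic polynomial $Q(z)=\prod_{k=1}^n(z^2-2a_kz+1)$ of degree $2n$, the Jacobi weight pulls back to $v(z)=2^{-\rho_\alpha-\rho_\beta}|1-z|^{2\rho_\alpha}|1+z|^{2\rho_\beta}$ on $\T$, and consequently
\[
\cW_n(\rho_\alpha,\rho_\beta)=\min\bigl\{\|vQ\|_{L^\infty(\T)}:Q\text{ monic and self-reciprocal},\ \deg Q=2n\bigr\}.
\]
The monotonicity now says that enlarging the degree by $2$, while keeping self-reciprocity, cannot increase this minimum, and the plan is to multiply the degree-$2n$ minimiser by a self-reciprocal quadratic $z^2-2az+1$ (that is, to insert one extra node $a\in[-1,1]$) with $a$ chosen so that the resulting growth of $\|vQ\|_{L^\infty(\T)}$ is absorbed by the decay of $v$ near $z=\pm1$. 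It is precisely here that the hypothesis $2\rho_\alpha,2\rho_\beta\in\{0\}\cup[1,\infty)$ -- i.e.\ $v$ vanishing to order at least $1$ at $\pm1$ -- has to be used.

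The step I expect to be the real obstacle is exactly this last one. A crude estimate gives only $\|z^2-2az+1\|_{L^\infty(\T)}\ge2$ and hence merely $\cW_{n+1}\le2\cW_n$; the superfluous factor $2$ is the Widom factor of $[-1,1]$ itself, and removing it forces one to quantify how far from $\pm1$ the maxima of the extremal weighted polynomial can lie and how small $v$ is there -- which is what the condition $\rho_\alpha,\rho_\beta\ge1/2$ (together with the trivial value $0$) controls, and which is expected to break down for exponents in $(0,1/2)$, hence the numerical experiments supplied there.
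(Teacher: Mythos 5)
Your treatment of the asymptotics and of parts (1)--(2) is fine and essentially matches the paper: Bernstein's Theorem \ref{thm:bernstein} with \eqref{eq:asymptotic_jacobi_cheybshev_norm} gives the limit, the four kinds of Chebyshev polynomials settle (1) via the alternation criterion, and Theorem \ref{thm:jacobi_inequality} (which the paper proves via the Chow--Gatteschi--Wong bound and Lemma \ref{lem:jacobi_estimate}) gives $\cW_n\leq 2^{1-\rho_\alpha-\rho_\beta}$ and hence the supremum in (2). Likewise, your reduction of part (3) to the single inequality $\cW_{n+1}(\rho_\alpha,\rho_\beta)\leq \cW_n(\rho_\alpha,\rho_\beta)$, together with the computation $\cW_0(\rho_\alpha,\rho_\beta)=\bigl(\tfrac{2\rho_\alpha}{\rho_\alpha+\rho_\beta}\bigr)^{\rho_\alpha}\bigl(\tfrac{2\rho_\beta}{\rho_\alpha+\rho_\beta}\bigr)^{\rho_\beta}$, is a correct organization of that case.

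The genuine gap is that this monotonicity is never proved. Your plan is to work inside the constrained class on the circle: $\cW_n$ is the minimum of $\|vQ\|_{L^\infty(\T)}$ over monic \emph{self-reciprocal} $Q$ of degree $2n$, and you propose to pass from $n$ to $n+1$ by multiplying the extremal $Q$ by a quadratic $z^2-2az+1$, hoping the factor's size is absorbed by the vanishing of $v$ at $\pm1$. As you yourself concede, the crude estimate only yields $\cW_{n+1}\leq 2\,\cW_n$, and no argument is supplied for removing the factor $2$; ``quantify how far from $\pm1$ the maxima can lie'' is exactly the hard analytic content, and nothing in the proposal shows it can be done, nor where $\rho_\alpha,\rho_\beta\geq 1/2$ would enter quantitatively. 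The paper avoids this difficulty by a structurally different step, Theorem \ref{thm:main_result}: using the P\'olya--Szeg\H{o} construction (Lemma \ref{thm:polya-szego}), Gauss--Lucas, and the sharp Erd\H{o}s--Lax-type \emph{equality} of Theorem \ref{thm:bergman_rubin} for generalized polynomials with exponents $\geq 1$ (this is precisely where $2\rho_\alpha,2\rho_\beta\geq 1$ is used), it identifies the degree-$n$ interval problem, up to the explicit constant $2^{n+\rho_\alpha+\rho_\beta-1}$, with the \emph{unconstrained} circle problem for the reduced weight $(z-1)^{2\rho_\alpha-1}(z+1)^{2\rho_\beta-1}$ over all monic polynomials of degree $2n+1$ -- the passage is via differentiating the self-reciprocal extremal expression, which trades the symmetry constraint for one degree and one unit of each exponent. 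Once the self-reciprocity constraint is gone, monotonicity is immediate: multiplying the degree-$(2n+1)$ minimizer by $z^2$, which is unimodular on $\T$, produces an admissible degree-$(2n+3)$ competitor with the same norm. Without this (or some substitute for your ``absorption'' step), part (3) of the theorem remains unproven in your proposal.
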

The case of $\rho_\alpha,\rho_\beta \in  \{0,1/2\}$ is classical and if $\rho_\beta\in \{0,1/2\}$ and $\rho_\alpha\geq 1/2$ (or vice versa), the result was settled in \cite{bergman-rubin24}. We will in fact apply a similar technique to what is used there for the general setting of $\rho_\alpha,\rho_\beta\geq1/2$. This method originates with \cite{lachance-saff-varga79}. As such, the proofs of the different cases of Theorem \ref{main:thm} uses essentially different techniques and will be split into three separate sections.

\begin{figure}
\centering
\begin{tikzpicture}[scale=3.5]
		\draw[->] (-.5,0) -- (2,0) node[right] {$\rho_\alpha$}; 
	    \draw[->] (0,-.5) -- (0,2) node[above] {$\rho_\beta$};
	\node (r0) at ( 0,  0) {};
   \node (s0) at (0, 0.5) {}; 
   \node (s1) at ( 0.5, 0.5) {}; 
   \node (s2) at ( 0.5, 0) {}; 
  	\draw (0,0.5) node[left] {1/2};
  	 \draw (0.5,0) node[below] {1/2};
   \node (s3) at ( 0.5, 2) {}; 
	   \node (s4) at ( 2, 2) {}; 
	   	   \node (s5) at ( 2, .5) {}; 
   \fill[fill=gray] (r0.center)--(s0.center)--(s1.center)--(s2.center);
   \path[draw] (r0.center)--(s0.center);
   \path[draw] (s0.center)--(s1.center);
   \path[draw] (s1.center)--(s2.center);
   \path[draw] (s2.center)--(r0.center);

	\fill[fill=gray] (s1.center)--(s3.center)--(s4.center)--(s5.center);
	   \path[draw] (s1.center)--(s5.center);
   \path[draw] (s1.center)--(s3.center);
	\draw[color=black, fill=white] (r0) circle (.02);
   \draw[color=black, fill=white]  (s0) circle (.02);
   \draw[color=black, fill=white]  (s1) circle (.02);
      \draw[color=black, fill=white]  (s2) circle (.02);
    	\end{tikzpicture}
					\caption{The decomposition of the parameter domain as suggested by Theorem \ref{main:thm}. If $\rho_\alpha,\rho_\beta\in [0,1/2]$ (corresponding to the lower left square), $\cW_n(\rho_\alpha,\rho_\beta)\leq 2^{1-\rho_\alpha-\rho_\beta}$ with equality if and only if $\rho_\alpha,\rho_\beta\in \{0,1/2\}$. If $\rho_\alpha,\rho_\beta\in [1/2,\infty)$, the quantity $\cW_n(\rho_\alpha,\rho_\beta)$ decreases monotonically to $2^{1-\rho_\alpha-\rho_\beta}$.}
					\label{fig:parameter_domain}

\end{figure}
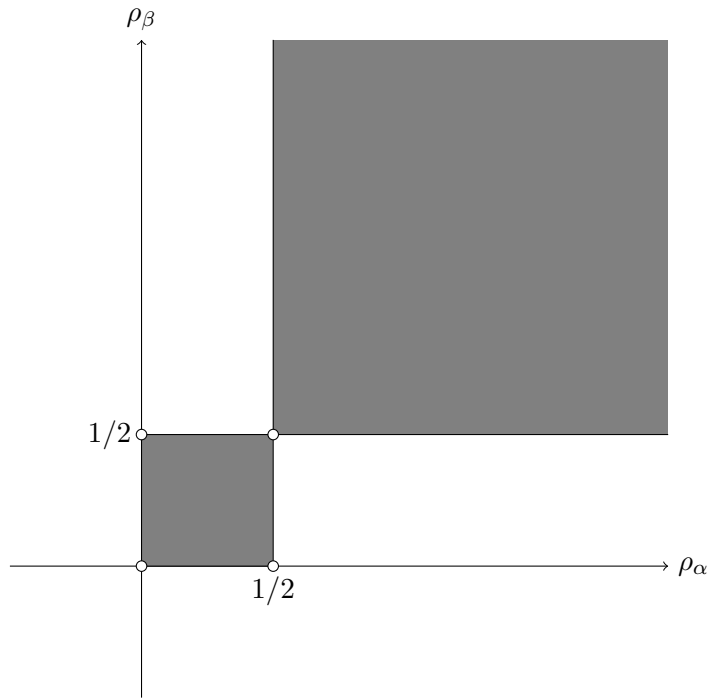

	 We should also note that while negative parameters $\rho_\alpha$ and $\rho_\beta$ are allowed in Theorem \ref{thm:bernstein}, they simply result in the fact that a zero of the minimizer is forcedly placed to cancel the effect of the pole and hence the setting transfers to the one of non-negative parameters if the degree of the associated Chebyshev polynomial is sufficiently large.

Asymptotic results for the Chebyshev polynomials corresponding to \eqref{eq:chebyshev_norm} exist for smooth enough weight functions $w$ which are strictly positive on $[-1,1]$. See for instance \cite{lubinsky-saff87,kroo-peherstorfer08, kroo14}. If the weight function $w$ vanishes somewhere on $[-1,1]$, there does not seem to be many general results for the corresponding Chebyshev polynomials apart from Theorem \ref{thm:bernstein} found in \cite{bernstein31}.

The extension of Chebyshev polynomials to the complex plane was first carried out by Faber in \cite{faber19}. Apart from the proof of Theorem \ref{thm:main_result}, our considerations exclusively concern Chebyshev polynomials corresponding to Jacobi weights and we therefore refrain from a detailed presentation of the general complex case. We note, however, that if $\E\subset \bbC$ is a compact set and $w:\E\rightarrow [0,\infty)$ is a weight function which is non-zero for at least $n$ points, then there exists a unique monic polynomial $T_n^w$ of degree $n$ minimizing
\[\max_{z\in \E}w(z)\left|\prod_{k=1}^{n}(z-a_k)\right|.\]
We invite the reader to consult \cite{smirnov-lebedev68,novello-schiefermayr-zinchenko21} for a detailed presentation of complex Chebyshev polynomials.

Our motivation in providing a detailed study of $\cW_n(\rho_\alpha,\rho_\beta)$ originates from an inquiry of complex Chebyshev polynomials corresponding to star graphs of the form $\{z:z^m\in [-2,2]\}$. It can be shown that these complex Chebyshev polynomials can be directly related to Chebyshev polynomials on $[-1,1]$ with Jacobi weights, see \cite{christiansen-eichinger-rubin23} for details. In the study of the Widom factors for star graphs, certain monotonicity properties appear which can be explained by Theorem \ref{main:thm}.  We trust that Theorem \ref{main:thm} will also be useful in many other connections.

\section{The case of $\rho_\alpha,\rho_\beta \in \{0,1/2\}$}
	We proceed by delving into the proof of Theorem \ref{main:thm}, split into three cases. The first case is very simple. When $\rho_\alpha,\rho_\beta\in \{0,1/2\}$, the minimizers of \eqref{eq:widom_factor} are simply the Chebyshev polynomials of the 1$^{\text{st}}$ to 4$^{\text{th}}$ kind, see e.g. \cite{mason93}. Using basic trigonometry, it follows that if $\theta\in [0,\pi]$ then
		\begin{align}
			\cos \frac{\theta}{2} & = \sqrt{\frac{1+\cos\theta}{2}}, \label{eq:trig-alpha-weight}\\
			\sin \frac{\theta}{2} & = \sqrt{\frac{1-\cos\theta}{2}}.\label{eq:trig-beta-weight}
		\end{align}
		Through the change of variables $x = \cos \theta$, the trigonometric functions
		\begin{align*}
			T_n(x) = 2^{1-n}\cos n\theta,& \quad U_n(x) = 2^{-n}\frac{\sin(n+1)\theta}{\sin \theta},	\\
			V_n(x) = 2^{-n}\frac{\cos\left(n+\frac{1}{2}\right)\theta}{\cos \frac{\theta}{2}}, & \quad W_n(x) = 2^{-n}\frac{\sin\left(n+\frac{1}{2}\right)\theta}{\sin \frac{\theta}{2}},
		\end{align*}
		all define monic polynomials in $x$ of degree $n$. These are precisely, in order, the monic Chebyshev polynomials of the 1$^{\text{st}}$, 2$^{\text{nd}}$, 3$^{\text{rd}}$ and 4$^{\text{th}}$ kind. It is a straightforward matter to verify that they all satisfy the alternating property from \eqref{eq:alternation} upon multiplication with the weight functions
		\begin{align*}
			1,\quad &\sin\theta=\sqrt{1-x^2}, \\
			\sqrt{2}\cos\frac{\theta}{2}=\sqrt{1+x},\quad &\sqrt{2}\sin\frac{\theta}{2}=\sqrt{1-x}, 
		\end{align*}
		respectively. As a result, $T_n, U_n, V_n$, and $W_n$
are the minimal configurations sought after in \eqref{eq:widom_factor} for the weight parameters $\rho_\alpha,\rho_\beta\in \{0,1/2\}$. It is evident that $\cW_n(\rho_\alpha,\rho_\beta) = 2^{1-\rho_\alpha-\rho_\beta}$ in all these cases and this shows the first part of Theorem \ref{main:thm}.
		
	\section{The case of $0\leq \rho_\alpha,\rho_\beta \leq 1/2$}	 
	Having settled the first case of Theorem \ref{main:thm}, we turn toward the verification of the second case. For parameter values $\rho_\alpha,\rho_\beta\in [0,1/2]$, we want to show that
	\begin{equation}
		\sup_n\cW_n(\rho_\alpha,\rho_\beta) = 2^{1-\rho_\alpha-\rho_\beta}.
		\label{eq:sup_bound}
	\end{equation}
	These parameter values correspond to the lower left square in Figure \ref{fig:parameter_domain}. Observe that if $\rho_\alpha,\rho_\beta\in \{0,1/2\}$ then this follows from the first case of Theorem \ref{main:thm} since then the sequence $\{\cW_n(\rho_\alpha,\rho_\beta)\}$ is constant. Our approach will consist of first verifying Theorem \ref{thm:jacobi_inequality}. For this reason we remind the reader of our definition of $\psi_k^\ast\in [0,\pi]$ through the formula \begin{equation}
		\prod_{k=1}^{n}(x-\cos\psi_k^\ast)=2^nP_n^{(\alpha,\beta)}(x)\Big/{{2n+\alpha+\beta}\choose{n}},
		\label{eq:monic_jacobi 2}
	\end{equation}
	where  $P_n^{(\alpha,\beta)}$ denotes the Jacobi polynomial with parameters $\alpha,\beta$ and the angles $\psi_k^\ast$ are those for which $\cos\psi_1^\ast,\dotsc,\cos\psi_n^\ast$ enumerate its zeros. We also recall the relation between the $L^2$ and $L^\infty$ parameters
	\begin{equation}
		\begin{cases}
			\rho_\alpha=\frac{\alpha}{2}+\frac{1}{4},\\
		\rho_\beta =\frac{\beta}{2}+\frac{1}{4}.
		\end{cases}
		\label{eq:parameter_relation}
	\end{equation}
	If we can show the validity of \eqref{eq:jacobi_inequality}, then we trivially obtain
	\[\cW_n(\rho_\alpha,\rho_\beta)\leq 2^n\max_{x\in [-1,1]}(1-x)^{\rho_\alpha}(1+x)^{\rho_\beta}\left|\prod_{k=1}^{n}(x-\cos\psi_k^\ast)\right|\leq 2^{1-\rho_\alpha-\rho_\beta}\]
	from which \eqref{eq:sup_bound}, the second case of Theorem \ref{main:thm} follows from the fact that $\cW_n(\rho_\alpha,\rho_\beta)\sim 2^{1-\rho_\alpha-\rho_\beta}$ as $n\rightarrow \infty$. The key to verifying \eqref{eq:jacobi_inequality} is the following result.
	\begin{theorem}[\cite{chow-gatteschi-wong94}]
		If $-1/2\leq \alpha,\beta \leq 1/2$ and $\theta\in[0,\pi]$, then 
		\begin{equation}
			\left(\sin\frac{\theta}{2}\right)^{\alpha+1/2}\left(\cos\frac{\theta}{2}\right)^{\beta+1/2}\left|P_n^{(\alpha,\beta)}(\cos \theta)\right|\leq \frac{\Gamma(q+1)}{\Gamma(\frac{1}{2})}{{n+q}\choose{n}}\left(n+\frac{\alpha+\beta+1}{2}\right)^{-q-\frac{1}{2}}
			\label{eq:chow-gatteschi-wong}
		\end{equation}
		where $q = \max(\alpha,\beta)$.
		\label{thm:chow-gatteschi-wong}
	\end{theorem}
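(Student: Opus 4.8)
The plan is to pass to the Liouville normal form of the Jacobi differential equation and then run a Sonine-type monotonicity argument, calibrated against the Bessel function, to extract the sharp constant. Writing $N = n + \frac{\alpha+\beta+1}{2}$ and
\[
u(\theta) = \left(\sin\tfrac{\theta}{2}\right)^{\alpha+1/2}\left(\cos\tfrac{\theta}{2}\right)^{\beta+1/2}P_n^{(\alpha,\beta)}(\cos\theta),
\]
so that the left-hand side of \eqref{eq:chow-gatteschi-wong} is exactly $|u(\theta)|$, the standard substitution (see \cite[Ch.~IV]{szego75}) turns the Jacobi equation into the normal form
\[
u''(\theta) + g(\theta)\,u(\theta) = 0,\qquad g(\theta) = N^2 + \frac{\tfrac14-\alpha^2}{4\sin^2(\theta/2)} + \frac{\tfrac14-\beta^2}{4\cos^2(\theta/2)}.
\]
The hypothesis $-\tfrac12\le\alpha,\beta\le\tfrac12$ is used here precisely to make both numerators $\tfrac14-\alpha^2$ and $\tfrac14-\beta^2$ nonnegative, so that $g(\theta)\ge N^2>0$ throughout $(0,\pi)$. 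By the symmetry $\theta\mapsto\pi-\theta$, which interchanges $\alpha$ and $\beta$, I may assume $q=\alpha=\max(\alpha,\beta)$.

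Next I would introduce the Sonine function $M(\theta) = u(\theta)^2 + u'(\theta)^2/g(\theta)$, whose derivative is $M'(\theta) = -g'(\theta)g(\theta)^{-2}u'(\theta)^2$. A short computation shows that $g$ is non-increasing and then non-decreasing, with at most one interior minimum; hence $M$ rises up to that minimum and falls afterwards, so that the successive relative maxima of $u^2$ increase toward the middle of the interval. This already shows that $|u|$ is uniformly bounded and locates the extremum, but it does not by itself produce the explicit constant.

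To pin down the constant I would compare with the Bessel equation. Using $\sin(\theta/2)\le\theta/2$ and discarding the nonnegative $\beta$-term gives $g(\theta)\ge N^2 + (\tfrac14-\alpha^2)/\theta^2$, the normal-form potential of $V(\theta)=\sqrt{\theta}\,J_\alpha(N\theta)$. Since $u$ and $V$ share the leading behaviour $\propto\theta^{\alpha+1/2}$ as $\theta\to0^+$, they are asymptotically proportional there, and the proportionality constant is fixed by the endpoint value $P_n^{(\alpha,\beta)}(1)=\binom{n+\alpha}{n}$ together with the Mehler--Heine normalization $\Gamma(\alpha+1)(z/2)^{-\alpha}J_\alpha(z)\to1$. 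An amplitude comparison, propagating the pointwise inequality $g\ge N^2+(\tfrac14-\alpha^2)/\theta^2$ into a bound on the relative maxima of $u$ by the scaled Bessel envelope, then yields $|u(\theta)|\le \binom{n+\alpha}{n}\Gamma(\alpha+1)\,2^{-1/2}N^{-\alpha-1/2}\sup_{z>0}z^{1/2}|J_\alpha(z)|$. Finally I would invoke the classical inequality $\sqrt{z}\,|J_\nu(z)|\le\sqrt{2/\pi}$, valid for $|\nu|\le\tfrac12$ (with equality for $\nu=\pm\tfrac12$, and approached asymptotically otherwise), which collapses the constant to $\frac{\Gamma(q+1)}{\Gamma(1/2)}\binom{n+q}{n}N^{-q-1/2}$, matching \eqref{eq:chow-gatteschi-wong}.

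The main obstacle is the amplitude-comparison step: converting the pointwise potential inequality into a rigorous bound on the global maximum of $|u|$ by the supremum of the scaled Bessel envelope, while tracking the multiplicative constant exactly. The tension is that the true maximum of $|u|$ sits in the interior (near the minimum of $g$), whereas the clean constant is read off at the endpoint; reconciling these requires the monotonicity of the envelope $z^{1/2}|J_\alpha(z)|$ for $|\alpha|\le\tfrac12$ and a careful matching of $u$ with $V$ near $\theta=0$, which is where the delicate analysis, and the sharpness statement, is concentrated.
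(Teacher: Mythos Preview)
The paper does not prove this theorem; it is quoted from \cite{chow-gatteschi-wong94} and then used as a black box to bound the weighted Jacobi polynomials and thereby the Widom factors for $\rho_\alpha,\rho_\beta\in[0,1/2]$. There is consequently no proof in the paper to compare your attempt against.

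That said, your outline is essentially the strategy of the original reference: pass to the Liouville normal form, use $|\alpha|,|\beta|\le\tfrac12$ to make both singular terms in the potential nonnegative, and control the envelope by comparison with a Bessel function of order $q=\max(\alpha,\beta)$. The auxiliary facts you invoke are correct, including $\sqrt{z}\,|J_\nu(z)|\le\sqrt{2/\pi}$ for $|\nu|\le\tfrac12$ and the endpoint normalization via $P_n^{(\alpha,\beta)}(1)=\binom{n+\alpha}{n}$, and the constants assemble exactly as you indicate.

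Your self-diagnosed gap is genuine and is precisely the crux. The Sonine function $M=u^2+u'^2/g$ only tells you that the local maxima of $|u|$ peak near the interior minimum of $g$; it does not bound that peak by the Bessel envelope read off at the endpoint. Nor does replacing $g$ by the Bessel potential $h(\theta)=N^2+(\tfrac14-\alpha^2)/\theta^2$ help directly: the modified Sonine quantity $u^2+u'^2/h$ has derivative $2uu'(1-g/h)-h'\,u'^2/h^2$, whose first term carries no definite sign even though $g\ge h$. Converting the pointwise potential inequality into a global amplitude bound with the stated constant requires a more refined comparison than a one-line Sonine argument---in \cite{chow-gatteschi-wong94} this is carried out through an additional change of variables and a carefully constructed auxiliary function---so as written your proposal is a correct roadmap but not yet a proof.
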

	This result provides a Bernstein-type inequality for Jacobi polynomials and sharpens a previous result of Baratella \cite{baratella86} who had shown \eqref{eq:chow-gatteschi-wong} with the larger factor $2.821$ in place of $\Gamma(q+1)/\Gamma(1/2)$. In the case where $|\alpha| = |\beta| = 1/2$, the inequality in \eqref{eq:chow-gatteschi-wong} is saturated. For arbitrary $-1/2\leq \alpha,\beta \leq 1/2$ the constant $\Gamma(q+1)/\Gamma(1/2)$ is asymptotically sharp as can be seen from an argument using Stirling's formula which we will provide in the proof of Lemma \ref{lem:jacobi_estimate}. The type of inequality exemplified in \eqref{eq:chow-gatteschi-wong} originate with Bernstein who showed in \cite{bernstein31} that the Legendre polynomials $P_n := P_n^{(0,0)}$ satisfy
	\begin{equation}
		\left(\sin \theta\right)^{1/2}|P_n(\cos\theta)|\leq \left(\frac{2}{\pi}\right)^{1/2}n^{-\frac{1}{2}},\quad \theta \in [0,\pi].
		\label{eq:bernstein_ineq}
	\end{equation}
	Antonov and Hol\v{s}hevnikov \cite{antonov-holshevnikov80} sharpened \eqref{eq:bernstein_ineq} and later Lorch \cite{lorch84} extended such an inequality to Gegenbauer polynomials. In terms of Jacobi polynomials, this provides a Bernstein-type inequality for $P_n^{(\lambda,\lambda)}$ with $-1/2\leq \lambda \leq 1/2$. Theorem \ref{thm:chow-gatteschi-wong} contains all these inequalities as special cases. In a different direction, the Erd\'{e}lyi--Magnus--Nevai Conjecture \cite{erdelyi-magnus-nevai94} proposes that if $\hat{P}_n^{(\alpha,\beta)}$ denotes the orthonormal Jacobi polynomial then for any parameter values $\alpha,\beta\geq-1/2$,
	\[\max_{x\in [-1,1]}(1-x)^{\alpha+1/2}(1+x)^{\beta+1/2}(\hat{P}_n^{(\alpha,\beta)}(x))^2 = O(\max[1,(\alpha^2+\beta^2)^{1/4}]).\]
	Theorem \ref{thm:chow-gatteschi-wong} can be used to verify this conjecture in the case where $|\alpha|,|\beta|\leq 1/2$. This is shown in \cite{gautschi09} where an extensive consideration concerning the sharpness of Theorem \ref{thm:chow-gatteschi-wong} is studied numerically for different parameter values.
	
	We consider transferring the trigonometric weights to algebraic weights as in the statement of Theorem \ref{thm:jacobi_inequality}. Letting $x=\cos\theta$, we find that
	\begin{align}
		\left(\sin\frac{\theta}{2}\right)^{\alpha+1/2}&=\left(\frac{1-x}{2}\right)^{\frac{\alpha}{2}+\frac{1}{4}} = 2^{-\rho_\alpha}(1-x)^{\rho_\alpha},\label{eq:sin_weight}\\
		\left(\cos\frac{\theta}{2}\right)^{\beta+1/2}&=\left(\frac{1+x}{2}\right)^{\frac{\beta}{2}+\frac{1}{4}}=2^{-\rho_\beta}(1+x)^{\rho_\beta}, \label{eq:cos_weight}
	\end{align}
	with the parameter relation stated in \eqref{eq:parameter_relation}.
	Together with \eqref{eq:sin_weight}, \eqref{eq:cos_weight} and \eqref{eq:monic_jacobi 2}, the change of variables transforms \eqref{eq:chow-gatteschi-wong} into the statement that for $x\in [-1,1]$,
	\begin{equation}
		(1-x)^{\rho_\alpha}(1+x)^{\rho_\beta}\left|\prod_{k=1}^{n}(x-\cos\psi_k^\ast)\right|\leq \frac{\Gamma(q+1)2^{n+\frac{\alpha+\beta+1}{2}}{{n+q}\choose{n}}}{\sqrt{\pi}\left(n+\frac{\alpha+\beta+1}{2}\right)^{q+\frac{1}{2}}{{2n+\alpha+\beta}\choose{n}}},
		\label{eq:gautschi}
	\end{equation}
	where $q = \max(\alpha,\beta)$. From \eqref{eq:parameter_relation} we see that this implies that 
	\begin{equation}
		\cW_n\left(\rho_\alpha,\rho_\beta\right)\leq 2^n\frac{\Gamma(q+1)2^{n+\frac{\alpha+\beta+1}{2}}{{n+q}\choose{n}}}{\sqrt{\pi}\left(n+\frac{\alpha+\beta+1}{2}\right)^{q+\frac{1}{2}}{{2n+\alpha+\beta}\choose{n}}}
		\label{eq:widom_upper_bound_lower_square}
	\end{equation}
	for $\rho_\alpha,\rho_\beta\in[0,1/2]$. In order to determine the upper bound of \eqref{eq:gautschi} and \eqref{eq:widom_upper_bound_lower_square}, we are ultimately led to consider the quantity
	\begin{equation}
		M_n(\alpha,\beta):=\frac{\Gamma(q+1)2^{2n+\frac{\alpha+\beta+1}{2}}{{n+q}\choose{n}}}{\sqrt{\pi}\left(n+\frac{\alpha+\beta+1}{2}\right)^{q+\frac{1}{2}}{{2n+\alpha+\beta}\choose{n}}}.
		\label{eq:M_n}
	\end{equation}
	
	\begin{lemma}
		Suppose that $\alpha,\beta \in [-1/2,1/2]$. Then $M_n(\alpha,\beta)$, defined in \eqref{eq:M_n}, increases monotonically to $2^{1-\rho_\alpha-\rho_\beta}$ as $n\rightarrow\infty$.
		\label{lem:jacobi_estimate}
	\end{lemma}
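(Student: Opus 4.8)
The plan is to reduce $M_n(\alpha,\beta)$ to a transparent quotient of Gamma functions, and then treat the limit and the monotonicity separately.

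\emph{Closed form and limit.} First I would write $\binom{n+q}{n}$ and $\binom{2n+\alpha+\beta}{n}$ as ratios of Gamma functions and apply Legendre's duplication formula in the shape $\Gamma(2n+\alpha+\beta+1)=\Gamma\bigl(2(n+\tfrac{\alpha+\beta+1}{2})\bigr)=\tfrac{2^{2n+\alpha+\beta}}{\sqrt\pi}\,\Gamma\bigl(n+\tfrac{\alpha+\beta+1}{2}\bigr)\Gamma\bigl(n+\tfrac{\alpha+\beta}{2}+1\bigr)$. Since $\rho_\alpha+\rho_\beta=\tfrac{\alpha+\beta}{2}+\tfrac12$, the powers of $2$ telescope and \eqref{eq:M_n} becomes
\[
M_n(\alpha,\beta)=2^{1-\rho_\alpha-\rho_\beta}\,R_n,\qquad R_n:=\frac{\Gamma(n+q+1)\,\Gamma(n+\alpha+\beta+1)}{\bigl(n+\tfrac{\alpha+\beta+1}{2}\bigr)^{q+1/2}\,\Gamma\bigl(n+\tfrac{\alpha+\beta+1}{2}\bigr)\,\Gamma\bigl(n+\tfrac{\alpha+\beta}{2}+1\bigr)}.
\]
The limit is then immediate from the Stirling asymptotics $\Gamma(n+a)/\Gamma(n+b)\sim n^{a-b}$: the exponents $q+1$, $\alpha+\beta+1$, $-(q+\tfrac12)$, $-\tfrac{\alpha+\beta+1}{2}$ and $-(\tfrac{\alpha+\beta}{2}+1)$ sum to $0$, so $R_n\to1$ and hence $M_n(\alpha,\beta)\to2^{1-\rho_\alpha-\rho_\beta}$. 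Since $M_n$ is, by Theorem~\ref{thm:chow-gatteschi-wong}, an upper bound for $\cW_n(\rho_\alpha,\rho_\beta)$, which by Theorem~\ref{thm:bernstein} has the same limit, this is also the promised confirmation that $\Gamma(q+1)/\Gamma(\tfrac12)$ is asymptotically sharp.

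\emph{Monotonicity — reduction.} Writing $R_n$ as $\Gamma(x+d+\tfrac12)\Gamma(x+c)\big/\bigl(x^{p}\Gamma(x)\Gamma(x+\tfrac12)\bigr)$ with $x:=n+\tfrac{\alpha+\beta+1}{2}$, $c:=\tfrac{\alpha+\beta+1}{2}$, $d:=\tfrac{|\alpha-\beta|}{2}$ and $p:=c+d=q+\tfrac12$ (using $n+q+1=x+d+\tfrac12$ and $n+\alpha+\beta+1=x+c$), shifting $x\mapsto x+1$ collapses the $\Gamma$-quotients to linear factors and gives
\[
\frac{R_{n+1}}{R_n}=\frac{(x+d+\tfrac12)(x+c)}{x(x+\tfrac12)}\Bigl(\frac{x}{x+1}\Bigr)^{p}.
\]
The hypothesis $\alpha,\beta\in[-\tfrac12,\tfrac12]$ translates exactly into $0\le d\le c$ and $c+d\le1$, i.e. $p\in[0,1]$ and $c\in[p/2,p]$. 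Hence $M_{n+1}\ge M_n$ is equivalent to
\[
(x+d+\tfrac12)(x+c)\,x^{\,p-1}\ \ge\ (x+\tfrac12)(x+1)^{p},\tag{$\star$}
\]
to be proved for all $x>0$, $p\in[0,1]$, $c\in[p/2,p]$, $d=p-c$.

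\emph{Monotonicity — the key step, and the main obstacle.} For fixed $x$ and $p$, the left side of $(\star)$ is a downward parabola in $c$ (the $c^{2}$-coefficient is $-x^{p-1}<0$), so it suffices to verify $(\star)$ at the endpoints $c=p$ and $c=p/2$. At $c=p$ (so $d=0$), $(\star)$ collapses to $1+\tfrac px\ge(1+\tfrac1x)^{p}$, which is Bernoulli's inequality. At $c=d=p/2$, dividing by $x^{p}(x+\tfrac12)$ turns $(\star)$ into $\bigl(1+\tfrac{p}{2x}\bigr)\bigl(1+\tfrac{p}{2x+1}\bigr)\ge(1+\tfrac1x)^{p}$; now use the telescoping identity $\bigl(1+\tfrac1{2x}\bigr)\bigl(1+\tfrac1{2x+1}\bigr)=1+\tfrac1x$ together with Bernoulli $(1+t)^{p}\le1+pt$ applied to each of the two factors. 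Tracking the equality cases in Bernoulli shows $(\star)$ is strict whenever $p\in(0,1)$, so with the concavity argument $R_{n+1}>R_n$ except when $p\in\{0,1\}$ and $c$ is an endpoint of $[p/2,p]$, which unwinds precisely to $\alpha,\beta\in\{-\tfrac12,\tfrac12\}$; there $R_n\equiv1$ and $M_n$ is constant, in agreement with Theorem~\ref{thm:jacobi_inequality}. The substance here is small once the reductions are made, and the trap is to attack $(\star)$ head-on with a single weighted AM--GM or Bernoulli estimate: every such estimate loses a term of size $\sim p(1-p)/x$ and therefore just fails. The genuinely clever points — and the ones I would emphasize — are the concavity-in-$c$ reduction to two boundary profiles and, for the profile $c=p/2$, the factorisation $1+\tfrac1x=(1+\tfrac1{2x})(1+\tfrac1{2x+1})$, which converts the surviving inequality into two applications of Bernoulli.
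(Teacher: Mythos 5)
Your proof is correct, and while the first half (rewriting the binomials via Gamma functions, Legendre duplication, and $\Gamma(n+a)/\Gamma(n+b)\sim n^{a-b}$ to get $M_n=2^{1-\rho_\alpha-\rho_\beta}R_n$ with $R_n\to1$) coincides with the paper's argument, your monotonicity proof takes a genuinely different and more elementary route. The paper forms the same ratio \eqref{eq:M_n_quot}, replaces $n$ by a real variable to get the function $f$ of \eqref{eq:thefunctionf}, computes the logarithmic derivative $f'/f$ as a rational function with numerator $c_2(\alpha,\beta)x^2+c_1(\alpha,\beta)x+c_0(\alpha,\beta)$, and then devotes the appendix (Lemma \ref{lem:coeff_analysis}) to a case-by-case sign analysis of these cubic/quartic coefficients; from $f'<0$ and $f\to1$ it concludes $f\ge1$. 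You instead prove $M_{n+1}/M_n\ge1$ directly: the substitution $x=n+\tfrac{\alpha+\beta+1}{2}$, $c=\tfrac{\alpha+\beta+1}{2}$, $d=\tfrac{|\alpha-\beta|}{2}$, $p=q+\tfrac12$ is an exact translation of $\alpha,\beta\in[-\tfrac12,\tfrac12]$ into $p\in[0,1]$, $c\in[p/2,p]$ (I checked: $d\le c$ is exactly $\min(\alpha,\beta)\ge-\tfrac12$ and $p\le1$ is exactly $q\le\tfrac12$), the left side of your $(\star)$ is indeed concave in $c$ with the right side $c$-independent, and the two endpoint cases reduce to Bernoulli's inequality, the case $c=d=p/2$ via the factorization $1+\tfrac1x=(1+\tfrac1{2x})(1+\tfrac1{2x+1})$ — all of which is sound, and proving $(\star)$ for all independent $x>0$ is legitimate even though $x=n+c$ in the application. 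Your route buys a calculus-free argument that dispenses with the paper's appendix and makes the equality cases transparent (strict Bernoulli for $p\in(0,1)$, strict concavity at interior $c$, so constancy only when $|\alpha|=|\beta|=\tfrac12$, matching the paper); what the paper's heavier computation buys is explicit access to the leading coefficient $c_2(\alpha,\beta)=\tfrac{\alpha^2}{2}+\tfrac{\beta^2}{2}-\tfrac14$, whose sign the authors reuse in Section 5 to speculate where the monotonicity of $\cW_n(\rho_\alpha,\rho_\beta)$ flips outside the box — information your endpoint argument does not produce.
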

	The proof we provide constitutes a lengthy computation. We have therefore chosen to illustrate the simpler case where $\alpha = \beta = q$ in the proof below while postponing most of the remaining computations to the appendix. The asymptotical behavior $M_n(\alpha,\beta)\sim 2^{1-\rho_\alpha-\rho_\beta}$ as $n\rightarrow \infty$ merits attention since this shows that the constant $\Gamma(q+1)/\Gamma(1/2)$ is sharp in \eqref{eq:chow-gatteschi-wong}. To see this, note that
	\[\cW_n(\rho_\alpha,\rho_\beta)\leq 2^n\max_{x\in [-1,1]}(1-x)^{\rho_\alpha}(1+x)^{\rho_\beta}\left|\prod_{k=1}^{n}(x-\cos\psi_k^\ast)\right|\leq M_n(\alpha,\beta).\]
	The left-hand side converges to $2^{1-\rho_\alpha-\rho_\beta}$ as $n\rightarrow \infty$ and consequently
				\[\liminf_{n\rightarrow \infty}M_n(\alpha,\beta)\geq 2^{1-\rho_\alpha-\rho_\beta}.\]
				 On the other hand, as Lemma \ref{lem:jacobi_estimate} shows, $M_n(\alpha,\beta)\sim 2^{1-\rho_\alpha-\rho_\beta}$ as $n\rightarrow \infty$ and therefore the constant factor in \eqref{eq:chow-gatteschi-wong} cannot be improved.
	
			\begin{proof}[Proof of Lemma \ref{lem:jacobi_estimate}]
				We rewrite the binomial coefficients using Gamma functions so that
				\begin{equation}
					{{n+q}\choose{n}}=\frac{\Gamma(n+q+1)}{\Gamma(n+1)\Gamma(q+1)},
					\label{eq:binom_1}
				\end{equation}
				\begin{equation}
					{{2n+\alpha+\beta}\choose{n}}=\frac{\Gamma(2n+\alpha+\beta+1)}{\Gamma(n+1)\Gamma(n+\alpha+\beta+1)}.
					\label{eq:binom_2}
				\end{equation}
				The Legendre duplication formula implies that
				\begin{equation}
					\Gamma(2n+\alpha+\beta+1)=\frac{2^{2n+\alpha+\beta}}{\sqrt{\pi}}\Gamma\left(n+\frac{\alpha+\beta+1}{2}\right)\Gamma\left(n+\frac{\alpha+\beta}{2}+1\right).
					\label{eq:legendre}
				\end{equation}
				The combination of \eqref{eq:M_n}, \eqref{eq:binom_1}, \eqref{eq:binom_2} and \eqref{eq:legendre} yields
				\begin{align*}
					M_n(\alpha,\beta)&=\frac{2^{2n+\frac{\alpha+\beta+1}{2}}\Gamma(n+q+1)\Gamma(n+\alpha+\beta+1)}{\sqrt{\pi}(n+\frac{\alpha+\beta+1}{2})^{q+1/2}\Gamma(2n+\alpha+\beta+1)}\\
					&=2^{\frac{1-\alpha-\beta}{2}}\frac{\Gamma(n+q+1)\Gamma(n+\alpha+\beta+1)}{(n+\frac{\alpha+\beta+1}{2})^{q+\frac{1}{2}}\Gamma(n+\frac{\alpha+\beta+1}{2})\Gamma(n+\frac{\alpha+\beta}{2}+1)}.
				\end{align*}
				The Gamma function satisfies that $\Gamma(n+a)/\Gamma(n+b) \sim n^{a-b}$ as $n\rightarrow \infty$, and we may conclude that
				\begin{align*}
					M_n(\alpha,\beta)\sim2^{\frac{1-\alpha-\beta}{2}}\frac{n^{q+1}n^{\alpha+\beta+1}}{n^{q+\frac{1}{2}}n^{\frac{\alpha+\beta+1}{2}}n^{\frac{\alpha+\beta}{2}+1}}=2^{\frac{1-\alpha-\beta}{2}}=2^{1-\rho_\alpha-\rho_\beta}	
				\end{align*}
				as $n\rightarrow\infty$. 				
				  
				We are left to determine the monotonicity of $M_n(\alpha,\beta)$. For this reason we consider the quotient 
				\begin{align}
				\begin{split}
					\frac{M_{n+1}(\alpha,\beta)}{M_n(\alpha,\beta)}&=\frac{(n+q+1)(n+\alpha+\beta+1)\left(n+\frac{\alpha+\beta+1}{2}\right)^{q+\frac{1}{2}}}{\left(n+\frac{\alpha+\beta+1}{2}+1\right)^{q+\frac{1}{2}}(n+\frac{\alpha+\beta+1}{2})(n+\frac{\alpha+\beta}{2}+1)}\\
					&=\frac{(n+q+1)(n+\alpha+\beta+1)\left(n+\frac{\alpha+\beta+1}{2}\right)^{q-\frac{1}{2}}}{\left(n+\frac{\alpha+\beta+1}{2}+1\right)^{q+\frac{1}{2}}(n+\frac{\alpha+\beta}{2}+1)}.
				\end{split}
				\label{eq:M_n_quot}
					\end{align}
				Whether this quotient is bigger than or smaller than $1$ determines the monotonicity of the sequence $\{M_n(\alpha,\beta)\}$. To simplify, we begin by considering the case $\alpha=\beta = q$. Writing $M_n(q):=M_n(q,q)$, the quotient \eqref{eq:M_n_quot} becomes
				\begin{align*}
					\frac{M_{n+1}(q)}{M_n(q)} & = \frac{(n+1+q)(n+2q+1)}{(n+q+\frac{1}{2})(n+q+1)}\left(\frac{n+q+\frac{1}{2}}{n+q+\frac{3}{2}}\right)^{q+\frac{1}{2}}\\
					& = (n+2q+1)\frac{(n+q+\frac{1}{2})^{q-\frac{1}{2}}}{(n+q+\frac{3}{2})^{q+\frac{1}{2}}}.
				\end{align*}
				We introduce the function
				\[f(x) = (x+2q+1)\frac{(x+q+\frac{1}{2})^{q-\frac{1}{2}}}{(x+q+\frac{3}{2})^{q+\frac{1}{2}}},\]
				and claim that $f'(x)<0$ for $x>0$. From the fact that $f(x)\rightarrow 1$ as $x\rightarrow \infty$, this will imply that $f(x)\geq 1$ for every $x>0$. As a consequence we see that $M_{n+1}(q)\geq f(n)M_n(q)\geq M_n(q)$.
				
				Taking the logarithmic derivative of $f$ generates
				\[\frac{f'(x)}{f(x)} = \frac{q^2-\frac{1}{4}}{(x+2q+1)(x+q+\frac{1}{2})(x+q+\frac{3}{2})}.\]
				If $-1/2<q<1/2$, then it is clear that 
				\[\frac{f'(x)}{f(x)}<0\]
				and as a consequence $f'(x)<0$. We conclude that $M_{n+1}(q)\geq M_n(q)$ and this settles the case where $\alpha= \beta = q$. 
				
				If $\alpha\neq \beta$, then the problem is more delicate. We again need to show that the quotient \eqref{eq:M_n_quot} is strictly greater than $1$. In order to show this, we reintroduce the function $f$ by defining it as
			\begin{align}
				f(x)=\frac{(x+q+1)(x+\alpha+\beta+1)}{(x+\frac{\alpha+\beta}{2}+1)}\frac{(x+\frac{\alpha+\beta+1}{2})^{q-1/2}}{(x+\frac{\alpha+\beta+1}{2}+1)^{q+1/2}}.
				\label{eq:thefunctionf}
			\end{align}
			In a completely analogous manner, we only need to show that $f'(x)<0$ for every $x\in (0,\infty)$ in order to conclude that $f(x)\geq 1$ and consequently $M_{n+1}(\alpha,\beta)\geq f(n)M_{n}(\alpha,\beta)\geq M_n(\alpha,\beta)$. Since $M_n$ is symmetric with respect to its variables and $q = \max(\alpha,\beta)$, it is enough to consider the parameter values $-1/2\leq \beta\leq \alpha\leq 1/2$.
			 
			The logarithmic derivative of $f$ is given by
			\[\frac{f'(x)}{f(x)}=\frac{1}{x+q+1}+\frac{1}{x+\alpha+\beta+1}+\frac{q-1/2}{x+\frac{\alpha+\beta+1}{2}}-\frac{1}{x+\frac{\alpha+\beta}{2}+1}-\frac{q+1/2}{x+\frac{\alpha+\beta+1}{2}+1}.\]
			Written over common denominator and utilizing that $\alpha = q$, this expression becomes
			\begin{equation}
				\frac{f'(x)}{f(x)}=\frac{c_2(\alpha,\beta)x^2+c_1(\alpha,\beta)x+c_0(\alpha,\beta)}{(x+\alpha+1)(x+\alpha+\beta+1)(x+\frac{\alpha+\beta}{2})(x+\frac{\alpha+\beta}{2}+1)(x+\frac{\alpha+\beta}{2}+\frac{3}{2})},
				\label{eq:logarithmic_derivative}
			\end{equation}
			where
			\begin{align}
			c_2(\alpha,\beta)&=\frac{\alpha^2}{2}+\frac{\beta^2}{2}-\frac{1}{4},\label{eq:c_2}\\
			c_1(\alpha,\beta)&=\frac{3\alpha^3}{4}+\frac{(4\beta+8)\alpha^2}{8}+\frac{(\beta^2-1)\alpha}{4}+\frac{\beta^3}{2}+\beta^2-\frac{\beta}{4}-\frac{1}{2},\label{eq:c_1}\\
			c_0(\alpha,\beta)&=\frac{\alpha^4}{4}+\frac{(3\beta+6)\alpha^3}{8}+\frac{(\beta+2)^2\alpha^2}{8}+\frac{(\beta^2-1)(\beta+2)\alpha}{8}+\frac{\beta^4}{8}+\frac{\beta^3}{2}+\frac{3\beta^2}{8}-\frac{\beta}{4}-\frac{1}{4}.\label{eq:c_0}
			\end{align}
			Now it is immediately clear that $c_2(\alpha,\beta)<0$ unless $|\alpha|=|\beta|=1/2$. We claim that the same is true for the remaining coefficients $c_1(\alpha,\beta)$ and $c_0(\alpha,\beta)$. Namely, $c_k(\alpha,\beta)<0$ for $k=0,1,2$ unless $|\alpha|=|\beta| = 1/2$. A consequence of these inequalities is that $f'(x)<0$ for $x>0$ which is precisely the sought after inequality. The proof of this is provided in Lemma \ref{lem:coeff_analysis} in the appendix. As a result, we obtain that $f(n)\geq 1$ and therefore
			\[M_{n+1}(\alpha,\beta)=f(n)M_n(\alpha,\beta)\geq M_n(\alpha,\beta).\]
			In conclusion, $M_n(\alpha,\beta)$ converges monotonically to $2^{1-\rho_\alpha-\rho_\beta}$ from below. \hfill\qedhere
			\end{proof}
			
			An immediate consequence of Lemma \ref{lem:jacobi_estimate} together with \eqref{eq:gautschi} is that 
			\begin{equation}
				\cW_n(\rho_\alpha,\rho_\beta)\leq 2^n\max_{x\in [-1,1]}(1-x)^{\rho_\alpha}(1+x)^{\rho_\beta}\left|\prod_{k=1}^{n}(x-\cos\psi_k^\ast)\right|\leq M_n(\alpha,\beta)\leq 2^{1-\rho_\alpha-\rho_\beta}.
			\end{equation}
			This simultaneously shows that Theorem \ref{thm:jacobi_inequality} holds and that 
			\[\cW_n(\rho_\alpha,\rho_\beta)\leq 2^{1-\rho_\alpha-\rho_\beta}\]
			is valid for every $n$. On the other hand, \eqref{eq:asymptotic_jacobi_cheybshev_norm} implies that these quantities are all asymptotically equal as $n\rightarrow \infty$. This is possible only if
			\[\sup_n\cW_n(\rho_\alpha,\rho_\beta)= 2^{1-\rho_\alpha-\rho_\beta}\]
			and hence the second case of Theorem \ref{main:thm} is shown. From the fact that $\cW_{n}(\rho_\alpha,\rho_\beta)\leq M_n(\alpha,\beta)$, we gather the following corollary to Lemma \ref{lem:jacobi_estimate}.
			\begin{corollary}
				If $0\leq \rho_\alpha, \rho_\beta\leq 1/2$ and $r=2\max(\rho_\alpha, \rho_\beta)$, then
				\begin{equation}
					\cW_n(\rho_\alpha,\rho_\beta)\leq \frac{\Gamma(r+1/2)2^{2n+\rho_\alpha+\rho_\beta}{{n+r-1/2}\choose{n}}}{\sqrt{\pi}(n+\rho_\alpha+\rho_\beta)^{r}{{2n+2\rho_\alpha+2\rho_\beta-1}\choose{n}}}\leq 2^{1-\rho_\alpha-\rho_\beta}
				\end{equation}
				and the rightmost inequality is strict unless $\rho_\alpha, \rho_\beta\in \{0,1/2\}$.
			\end{corollary}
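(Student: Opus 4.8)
The plan is to obtain the corollary as a direct repackaging of Lemma~\ref{lem:jacobi_estimate}, combined with the bound $\cW_n(\rho_\alpha,\rho_\beta)\le M_n(\alpha,\beta)$ already extracted from \eqref{eq:gautschi} and \eqref{eq:widom_upper_bound_lower_square}; no genuinely new estimate is needed, so the work is almost entirely bookkeeping.

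First I would rewrite the definition \eqref{eq:M_n} of $M_n(\alpha,\beta)$ in terms of the $L^\infty$ parameters. By the parameter relation \eqref{eq:parameter_relation}, $\alpha=2\rho_\alpha-\tfrac12$ and $\beta=2\rho_\beta-\tfrac12$, hence $\tfrac{\alpha+\beta+1}{2}=\rho_\alpha+\rho_\beta$, $2n+\alpha+\beta=2n+2\rho_\alpha+2\rho_\beta-1$, and $q=\max(\alpha,\beta)=2\max(\rho_\alpha,\rho_\beta)-\tfrac12=r-\tfrac12$. Substituting these identities into \eqref{eq:M_n} and cancelling the powers of $2$ gives
\[
M_n(\alpha,\beta)=\frac{\Gamma(r+1/2)\,2^{2n+\rho_\alpha+\rho_\beta}{{n+r-1/2}\choose{n}}}{\sqrt{\pi}\,(n+\rho_\alpha+\rho_\beta)^{r}{{2n+2\rho_\alpha+2\rho_\beta-1}\choose{n}}},
\]
which is exactly the middle member of the asserted chain of inequalities. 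Since $0\le\rho_\alpha,\rho_\beta\le1/2$ is equivalent to $-1/2\le\alpha,\beta\le1/2$, Lemma~\ref{lem:jacobi_estimate} applies. The first inequality of the corollary is then just $\cW_n(\rho_\alpha,\rho_\beta)\le M_n(\alpha,\beta)$ coming from \eqref{eq:gautschi}, and the second inequality follows because a sequence that increases monotonically to $2^{1-\rho_\alpha-\rho_\beta}$ (Lemma~\ref{lem:jacobi_estimate}) stays at or below that limit for every $n$.

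The only point requiring a little care is the strictness clause, and this is where I expect whatever (slight) difficulty there is. I would revisit the ratio $M_{n+1}(\alpha,\beta)/M_n(\alpha,\beta)=f(n)$ from the proof of Lemma~\ref{lem:jacobi_estimate}, where $f$ is the function in \eqref{eq:thefunctionf} (or its $\alpha=\beta=q$ specialisation). In both cases $f(x)\to1$ as $x\to\infty$, and by the coefficient analysis $c_0,c_1,c_2<0$ (Lemma~\ref{lem:coeff_analysis}) one has $f'(x)<0$ \emph{strictly} for $x>0$ unless $|\alpha|=|\beta|=1/2$; hence $f(n)>1$ and $\{M_n(\alpha,\beta)\}$ is strictly increasing, so $M_n(\alpha,\beta)<2^{1-\rho_\alpha-\rho_\beta}$ for all $n$. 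Translating back, $|\alpha|=|\beta|=1/2$ is precisely $\rho_\alpha,\rho_\beta\in\{0,1/2\}$, in which case the Chow et al. bound \eqref{eq:chow-gatteschi-wong} is saturated and $M_n(\alpha,\beta)\equiv 2^{1-\rho_\alpha-\rho_\beta}$; in every other case the rightmost inequality is strict. No real obstacle arises beyond this: all analytic content is contained in Lemmas~\ref{lem:jacobi_estimate} and~\ref{lem:coeff_analysis}, and what remains is the change of parameters together with the elementary observation that a monotone increasing sequence with limit $L$ satisfies $\le L$, strictly if the monotonicity is strict.
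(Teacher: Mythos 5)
Your proposal is correct and coincides with the paper's (implicit) argument: the corollary is exactly the chain $\cW_n(\rho_\alpha,\rho_\beta)\leq M_n(\alpha,\beta)\leq 2^{1-\rho_\alpha-\rho_\beta}$ from \eqref{eq:widom_upper_bound_lower_square} and Lemma \ref{lem:jacobi_estimate}, rewritten via $\alpha=2\rho_\alpha-\tfrac12$, $\beta=2\rho_\beta-\tfrac12$, $q=r-\tfrac12$, with strictness coming from the strict monotonicity of $M_n$ (i.e.\ $f(n)>1$ via Lemma \ref{lem:coeff_analysis}) unless $|\alpha|=|\beta|=1/2$, which is precisely $\rho_\alpha,\rho_\beta\in\{0,1/2\}$.
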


\section{The case of $\rho_\alpha,\rho_\beta\geq 1/2$}
	
	We are left to prove the third and final case of Theorem \ref{main:thm}. Our approach to proving this revolves around relating Jacobi weighted Chebyshev polynomials to weighted Chebyshev polynomials on the unit circle. These ideas have previously been investigated in \cite{lachance-saff-varga79, bergman-rubin24} for certain choices of the parameters. We will provide a general argument which works for any choice of parameters $\rho_\alpha,\rho_\beta\geq 1/2$. 
	
	The approach first studied in \cite{lachance-saff-varga79} hinges upon the Erd\H{o}s--Lax inequality, see \cite{lax44}. If $P$ is a polynomial of degree $n$ which does not vanish anywhere on $\D$, then
	\begin{equation}
		\max_{|z| = 1}|P'(z)| \leq \frac{n}{2}\max_{|z| = 1}|P(z)|.
		\label{eq:erdos_lax}
	\end{equation}
	This result relates the maximum modulus of a polynomial with its derivative. Furthermore, equality occurs in \eqref{eq:erdos_lax} if and only if all zeros of $P$ lie on the unit circle. A recent extension of \eqref{eq:erdos_lax} to the case of generalized polynomials having all zeros situated on the unit circle is the following.
	\begin{theorem}[\cite{bergman-rubin24}]
	\label{thm:bergman_rubin}
		Let $s_k\geq 1$ and $\theta_k\in [0,2\pi)$ for $k = 1,\dotsc,n$. Then
		\[\max_{|z| = 1}\left|\frac{d}{dz}\left\{\prod_{k=1}^{n}(z-e^{i\theta_k})^{s_k}\right\}\right| = \frac{\sum_{k=1}^{n}s_k}{2}\max_{|z| = 1}\left|\prod_{k=1}^{n}(z-e^{i\theta_k})^{s_k}\right|.\]
	\end{theorem}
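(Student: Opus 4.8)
The plan is to convert the equality into a one–variable problem on the unit circle and then handle the two inequalities separately. Set $G(z)=\prod_{k=1}^{n}(z-e^{i\theta_k})^{s_k}$ and $S=\sum_{k=1}^{n}s_k$; although $G$ is multivalued when some $s_k$ is not an integer, the logarithmic derivative $zG'(z)/G(z)=\sum_{k=1}^{n}s_kz/(z-e^{i\theta_k})$ is a single–valued rational function and $|G'|=|G|\cdot|G'/G|$ is unambiguous. First I would record the elementary computation that for $z=e^{i\phi}$ away from the points $e^{i\theta_k}$ one has $\Re\frac{e^{i\phi}}{e^{i\phi}-e^{i\theta_k}}=\tfrac12$ and $\Im\frac{e^{i\phi}}{e^{i\phi}-e^{i\theta_k}}=-\tfrac12\cot\tfrac{\phi-\theta_k}{2}$. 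Hence, with $F(\phi):=|G(e^{i\phi})|=\prod_k|e^{i\phi}-e^{i\theta_k}|^{s_k}=2^{S}\prod_k|\sin\tfrac{\phi-\theta_k}{2}|^{s_k}$,
\[
\frac{e^{i\phi}G'(e^{i\phi})}{G(e^{i\phi})}=\frac S2-\frac i2\sum_{k=1}^{n}s_k\cot\frac{\phi-\theta_k}{2}=\frac S2-i\,\frac{F'(\phi)}{F(\phi)},
\]
and since $|G'(e^{i\phi})|=|\tfrac{d}{d\phi}G(e^{i\phi})|$ we obtain the identity $|G'(e^{i\phi})|^{2}=F'(\phi)^{2}+\tfrac{S^{2}}{4}F(\phi)^{2}$. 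Writing $M:=\max_{|z|=1}|G(z)|=\max_\phi F(\phi)$, the theorem becomes the assertion $\max_\phi\bigl(F'(\phi)^{2}+\tfrac{S^{2}}{4}F(\phi)^{2}\bigr)=\tfrac{S^{2}}{4}M^{2}$.

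The inequality ``$\ge$'' is immediate: at a point $\phi_0$ with $F(\phi_0)=M$ the value $\phi_0$ is not one of the $\theta_k$, so $F$ is smooth there and $F'(\phi_0)=0$, giving $|G'(e^{i\phi_0})|^{2}=\tfrac{S^{2}}{4}M^{2}$; equivalently $|G'(e^{i\phi})|\ge F(\phi)\,\Re\frac{e^{i\phi}G'(e^{i\phi})}{G(e^{i\phi})}=\tfrac S2F(\phi)$ for every $\phi$. What remains is the upper bound $F'(\phi)^{2}\le\tfrac{S^{2}}{4}(M^{2}-F(\phi)^{2})$ for all $\phi$; equivalently, the function $\Theta(\phi):=\arccos(F(\phi)/M)\in[0,\pi/2]$ is Lipschitz with constant $S/2$, equivalently, writing $G(e^{i\phi})/M=\tfrac12(e^{i\gamma_+(\phi)}+e^{i\gamma_-(\phi)})$ with $\gamma_\pm(\phi)=\arg G(e^{i\phi})\pm\Theta(\phi)$, both ``arms'' $\gamma_\pm$ are nondecreasing. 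When all $s_k$ are integers this is already contained in the Erd\H{o}s--Lax inequality \eqref{eq:erdos_lax}: then $G$ is a polynomial of degree $S$ with no zeros in $\D$, so $\max|G'|\le\tfrac S2\max|G|$, and equality holds because all zeros lie on the circle.

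The hard part is to push this past integer exponents, that is, to establish the sharp Bernstein--Szeg\H{o}-type inequality $F'(\phi)^{2}\le\tfrac{S^{2}}{4}(M^{2}-F(\phi)^{2})$ for the generalized nonnegative trigonometric polynomial $F=2^{S}\prod_k|\sin\tfrac{\phi-\theta_k}{2}|^{s_k}$ of degree $S$ under the sole hypothesis $s_k\ge1$. I expect a naive bootstrap from the polynomial case to fail: approximating the exponents by rationals and clearing denominators writes $F=|R_q|^{1/q}$ with $R_q$ a genuine trigonometric polynomial all of whose zeros have multiplicity $\ge q$ (this is where $s_k\ge1$ enters), but the obvious substitution only returns Szeg\H{o}'s inequality for $R_q$, which after extraction of the $q$-th root is strictly weaker than what is needed. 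A direct argument therefore seems necessary, and the route I would pursue exploits that $\log F$ is \emph{strictly concave on each arc} between consecutive zeros $\theta_k$, since $(\log F)''=-\tfrac14\sum_k s_k\csc^{2}\tfrac{\phi-\theta_k}{2}<0$ (so $F$ is unimodal on each arc), together with the fact that $M$ is a \emph{global} maximum, to control how fast $F$ can decay away from that maximum; note also that $\varphi(z)=zG'(z)/G(z)$ maps $\D$ into the half-plane $\{\Re<S/2\}$ and fixes $0$, so $\varphi/(S-\varphi)$ is a finite Blaschke product vanishing at $0$, which I would try to combine with the concavity to bound $\Theta'$. The hypothesis $s_k\ge1$ is genuinely essential here: already for $n=1$ one computes $F'(\phi)^{2}+\tfrac{S^{2}}{4}F(\phi)^{2}=\tfrac{S^{2}}{4}2^{2S}|\sin\tfrac{\phi-\theta_1}{2}|^{2S-2}$, which exceeds $\tfrac{S^{2}}{4}M^{2}$ — in fact blows up near $\phi=\theta_1$ — as soon as $S=s_1<1$.
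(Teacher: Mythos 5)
Your reduction to the circle is correct and cleanly executed: the identity $|G'(e^{i\phi})|^{2}=F'(\phi)^{2}+\tfrac{S^{2}}{4}F(\phi)^{2}$ for $F=|G|$ on $|z|=1$, the observation that $F$ attains its maximum away from the $\theta_k$ so that $F'=0$ there (which gives the inequality ``$\geq$''), the remark that the integer-exponent case of ``$\leq$'' is exactly Erd\H{o}s--Lax, and your closing computation showing why $s_k\geq 1$ is essential are all fine. However, the entire content of the theorem beyond Erd\H{o}s--Lax is the upper bound for \emph{non-integer} exponents, i.e.\ the Szeg\H{o}-type inequality $F'(\phi)^{2}\leq\tfrac{S^{2}}{4}\bigl(M^{2}-F(\phi)^{2}\bigr)$ for the generalized trigonometric polynomial $F=2^{S}\prod_k|\sin\tfrac{\phi-\theta_k}{2}|^{s_k}$, and precisely at that point your text stops proving and starts planning (``the route I would pursue'', ``I would try to combine''). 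Neither the strict concavity of $\log F$ between consecutive zeros nor the fact that $\varphi(z)=zG'(z)/G(z)$ maps $\D$ into $\{\Re w<S/2\}$ (so that $\varphi/(S-\varphi)$ is a finite Blaschke product) is developed into an argument; note in particular that $|\varphi|$ blows up on the circle near each $\theta_k$, so the Blaschke-product observation by itself controls nothing about the product $F\,|\varphi|=|G'|$, which is exactly the quantity that must be bounded by $\tfrac{S}{2}M$. You correctly diagnosed that the naive rational-approximation bootstrap (applying Szeg\H{o}'s inequality to $F^{q}$) loses a factor, but you did not replace it with a complete argument, so the hard half of the stated equality remains a genuine gap rather than a routine verification.

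Two further remarks. First, the missing step is of a known type --- sharp Bernstein--Szeg\H{o} inequalities for generalized nonnegative trigonometric polynomials with all exponents at least $1$ (in the spirit of Erd\'elyi's work cited alongside \cite{erdelyi-magnus-nevai94}) --- so one honest way to close your argument would be to locate and invoke such a result explicitly; as written you neither cite nor prove it. Second, for context: the paper itself offers no proof to compare against, since Theorem \ref{thm:bergman_rubin} is quoted verbatim from \cite{bergman-rubin24}, where the proof must handle exactly the step you left open.
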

	Given a polynomial with zeros in the unit disk, there is a procedure of constructing a related polynomial whose zeros all lie on the unit circle.
	\begin{lemma}[P\'{o}lya and Szeg\H{o} \cite{polya-szego}]
		Let $|a_k|\leq 1$ for $k = 1,\dotsc,n$ and suppose that $m\in \mathbb{Z_+}$ and $\theta\in \bbR$. Then
		\[z^m\prod_{k=1}^{n}(z-a_k)+e^{i\theta}\prod_{k=1}^{n}(1-\overline{a_k} z)\]
		does not vanish away from $|z| = 1$.
		\label{thm:polya-szego}
	\end{lemma}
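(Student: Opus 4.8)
The plan is to prove the apparently stronger fact that the two summands have different moduli at every point off the unit circle, so they cannot cancel there. Write $p(z)=\prod_{k=1}^{n}(z-a_k)$ and $p^{*}(z)=\prod_{k=1}^{n}(1-\overline{a_k}z)$, so that the polynomial in question is $F(z):=z^{m}p(z)+e^{i\theta}p^{*}(z)$; note that $p^{*}$ is, up to a unimodular constant, the reciprocal polynomial of $p$, so its zeros are the points $1/\overline{a_k}$, each of modulus $\ge 1$. The single computation I would record first is the elementary identity
\[
|z-a|^{2}-|1-\overline{a}z|^{2}=(|z|^{2}-1)\bigl(1-|a|^{2}\bigr),\qquad z,a\in\bbC .
\]
Because $|a_k|\le 1$ for every $k$, taking products yields $|p(z)|\ge|p^{*}(z)|$ when $|z|\ge 1$ and $|p(z)|\le|p^{*}(z)|$ when $|z|\le 1$.

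With this in hand I would argue directly. Suppose $F(z_{0})=0$ with $|z_{0}|\ne 1$. If $|z_{0}|>1$, then $p(z_{0})\ne 0$ (its zeros all lie in $\overline{\D}$), and since $|z_{0}|^{m}>1$ (here I use $m\ge 1$) we get
\[
|z_{0}^{m}p(z_{0})|=|z_{0}|^{m}|p(z_{0})|>|p(z_{0})|\ge|p^{*}(z_{0})|=|e^{i\theta}p^{*}(z_{0})|,
\]
so the two summands cannot add to zero — a contradiction. If $|z_{0}|<1$, then $p^{*}(z_{0})\ne 0$ (indeed $|\overline{a_k}z_{0}|<1$ for all $k$), and since $|z_{0}|^{m}<1$,
\[
|z_{0}^{m}p(z_{0})|=|z_{0}|^{m}|p(z_{0})|\le|z_{0}|^{m}|p^{*}(z_{0})|<|p^{*}(z_{0})|=|e^{i\theta}p^{*}(z_{0})|,
\]
again forcing $F(z_{0})\ne 0$. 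Hence $F$ has no zeros off $\{|z|=1\}$.

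The calculation is routine; I expect the only subtle points to be organizational. One must keep track of the possible vanishing of $p(z_{0})$ or $p^{*}(z_{0})$, which is handled by the observation that the zeros of $p$ lie in $\overline{\D}$ while those of $p^{*}$ lie in $\bbC\setminus\D$. And the strict inequality $|z_{0}|^{m}\ne 1$ for $|z_{0}|\ne 1$ genuinely uses $m\ge 1$, i.e.\ $\mathbb{Z}_{+}$ must be read as the positive integers; for $m=0$ with all $|a_k|=1$ and a suitable $\theta$ the statement fails. An alternative I considered is to factor $F=p^{*}\,(z^{m}p/p^{*}+e^{i\theta})$ and note that $z^{m}p/p^{*}$ is a finite Blaschke-type product sending $\D$ into $\D$ and $\bbC\setminus\overline{\D}$ into $\bbC\setminus\overline{\D}$, hence of modulus $1$ only on $\{|z|=1\}$; this is conceptually slicker but needs extra care when some $|a_k|=1$ (where the relevant factor degenerates to a constant), so I would present the direct modulus estimate above.
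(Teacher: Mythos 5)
Your proof is correct. The paper itself offers no proof of this lemma --- it is quoted from P\'olya--Szeg\H{o} with a citation --- so there is nothing internal to compare against; your direct modulus comparison is in fact the standard argument for this classical exercise. The key identity $|z-a|^{2}-|1-\overline{a}z|^{2}=(|z|^{2}-1)(1-|a|^{2})$ is verified correctly, the product over $k$ gives $|p|\ge|p^{*}|$ for $|z|\ge 1$ and the reverse inside, and you handle the two delicate points properly: strictness of the inequality is secured by noting $p(z_{0})\neq 0$ for $|z_{0}|>1$ and $p^{*}(z_{0})\neq 0$ for $|z_{0}|<1$, and the factor $|z_{0}|^{m}$ supplies the strict gap, which indeed requires $m\ge 1$. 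Your caveat about $m=0$ is well taken but harmless here: in the paper's application (equation \eqref{eq:P}) one has $m=1$, so reading $\mathbb{Z}_{+}$ as the positive integers is consistent with how the lemma is used.
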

	
	Utilizing these two results, we obtain the following theorem which ``opens up'' the interval to the circle.
	
	\begin{theorem}
		Given $\rho_\alpha,\rho_\beta \geq 1/2$, let $\theta_1^\ast,\dotsc,\theta_{n}^\ast\in [0,2\pi)$ be the unique minimizing data of the expression
		\begin{equation}
			I_n:=\min_{\theta_1,\dotsc,\theta_n}\max_{x\in [-1,1]}\left|(1-x)^{\rho_\alpha}(1+x)^{\rho_\beta}\prod_{k=1}^{n}(x-\cos\theta_k)\right|.
			\label{eq:chebyshev_norm_jacobi_2}
		\end{equation}
		In that case, 
		\[\frac{1}{2\rho_\alpha+2\rho_\beta+2n}\frac{d}{dz}\left\{(z-1)^{2\rho_\alpha}(z+1)^{2\rho_\beta}\prod_{k=1}^{n}(z-e^{i\theta_k^\ast})(z-e^{-i\theta_k^\ast})\right\} \]
		is the unique minimizer of 
		\begin{equation}
			C_n:=\min_{a_1,\dotsc,a_{2n+1}}\max_{|z| = 1}\left|(z-1)^{2\rho_\alpha-1}(z+1)^{2\rho_\beta-1}\prod_{k=1}^{2n+1}(z-a_k)\right|.
			\label{eq:unit_circle}
		\end{equation}
		\label{thm:main_result}
		Furthermore, we have that
		\begin{equation}
		   C_n = 2^{n+\rho_\alpha+\rho_\beta-1} I_n.
		\end{equation}
	\end{theorem}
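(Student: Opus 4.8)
The plan is to pass between $[-1,1]$ and the unit circle via $x=\cos\theta$, $z=e^{i\theta}$, using $1\mp x=|z\mp 1|^{2}/2$ and, for $|z|=1$, $|z-e^{i\phi}|\,|z-e^{-i\phi}|=|z+z^{-1}-2\cos\phi|=2|x-\cos\phi|$. Together these give, for all $c,d\ge 0$, all $N\in\mathbb Z_{\ge0}$ and all $\phi_1,\dots,\phi_N\in[0,\pi]$,
\[
\max_{|z|=1}\Bigl|(z-1)^{2c}(z+1)^{2d}\prod_{i=1}^{N}(z-e^{i\phi_i})(z-e^{-i\phi_i})\Bigr|
=2^{c+d+N}\max_{x\in[-1,1]}(1-x)^{c}(1+x)^{d}\prod_{i=1}^{N}|x-\cos\phi_i|.
\]
Applied with $c=\rho_\alpha$, $d=\rho_\beta$, $N=n$, $\phi_i=\theta_i^\ast$, this identifies $2^{n+\rho_\alpha+\rho_\beta}I_n$ with $\max_{|z|=1}|G^\ast|$, where $G^\ast(z):=(z-1)^{2\rho_\alpha}(z+1)^{2\rho_\beta}\prod_{k}(z-e^{i\theta_k^\ast})(z-e^{-i\theta_k^\ast})$ is a generalized polynomial of degree $D:=2\rho_\alpha+2\rho_\beta+2n$ with all zeros on $|z|=1$ and every multiplicity $\ge1$ (here $\rho_\alpha,\rho_\beta\ge\tfrac12$ enters).

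For the bound $C_n\le 2^{n+\rho_\alpha+\rho_\beta-1}I_n$ I would take the logarithmic derivative of $G^\ast$: it shows $(G^\ast)'=(z-1)^{2\rho_\alpha-1}(z+1)^{2\rho_\beta-1}S^\ast(z)$ with $S^\ast$ a polynomial of degree $2n+1$ and leading coefficient $D$, so $g^\ast:=D^{-1}(G^\ast)'$ is admissible in \eqref{eq:unit_circle}. Theorem~\ref{thm:bergman_rubin} gives $\max_{|z|=1}|(G^\ast)'|=\tfrac{D}{2}\max_{|z|=1}|G^\ast|$, whence $C_n\le\max_{|z|=1}|g^\ast|=\tfrac12\max_{|z|=1}|G^\ast|=2^{n+\rho_\alpha+\rho_\beta-1}I_n$. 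Note $g^\ast$ is exactly the polynomial in the statement.

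For the reverse inequality I would work with the unique minimizer $g=(z-1)^{2\rho_\alpha-1}(z+1)^{2\rho_\beta-1}q$ of \eqref{eq:unit_circle}, $q$ monic of degree $2n+1$. Since the weight $|(z-1)^{2\rho_\alpha-1}(z+1)^{2\rho_\beta-1}|$ on $|z|=1$ is invariant under $z\mapsto\bar z$, uniqueness forces $q$ to have real coefficients; and since $|z-a|=|a|\,|z-1/\bar a|$ on $|z|=1$, replacing any zero of modulus $>1$ by its reflection strictly lowers the objective, so all zeros of $q$ lie in $\overline{\D}$. Put $q^\ast(z):=\prod_k(1-\bar a_kz)=z^{2n+1}q(1/z)$ and $R(z):=zq(z)-q^\ast(z)$. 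By Lemma~\ref{thm:polya-szego} (monomial factor $z$, phase $-1$) all zeros of $R$ lie on $|z|=1$; moreover $R$ is real, of exact degree $2n+2$, and anti-self-reciprocal, $z^{2n+2}R(1/z)=-R(z)$, which forces $R(\pm1)=0$ with odd multiplicities. Hence $\widehat R:=R/(z^{2}-1)$ is a monic real polynomial of degree $2n$ with all zeros on $|z|=1$ and even multiplicity at $\pm1$, say $\widehat R(z)=(z-1)^{2j}(z+1)^{2l}\prod_i(z-e^{i\phi_i})(z-e^{-i\phi_i})$. Then $\widetilde G:=(z-1)^{2\rho_\alpha-1}(z+1)^{2\rho_\beta-1}R=(z-1)^{2(\rho_\alpha+j)}(z+1)^{2(\rho_\beta+l)}\prod_i(z-e^{i\phi_i})(z-e^{-i\phi_i})$ satisfies $\max_{|z|=1}|\widetilde G|\le 2\max_{|z|=1}|g|=2C_n$, because $|R|\le|zq|+|q^\ast|=2|q|$ on $|z|=1$ (using $|q^\ast|=|q|$ there). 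Translating $\widetilde G$ back by the displayed identity and observing that $P(x):=(x-1)^{j}(x+1)^{l}\prod_i(x-\cos\phi_i)$ is a monic polynomial of degree $n$, I obtain $\max_{|z|=1}|\widetilde G|=2^{\rho_\alpha+\rho_\beta+n}\max_{x\in[-1,1]}(1-x)^{\rho_\alpha}(1+x)^{\rho_\beta}|P(x)|\ge 2^{\rho_\alpha+\rho_\beta+n}I_n$, so $2C_n\ge 2^{\rho_\alpha+\rho_\beta+n}I_n$, i.e. $C_n\ge 2^{n+\rho_\alpha+\rho_\beta-1}I_n$. Combining the two bounds yields $C_n=2^{n+\rho_\alpha+\rho_\beta-1}I_n$; since $g^\ast$ is admissible and attains this value, it is the unique minimizer.

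The hard part is the lower bound: the upper bound is a fairly mechanical use of Theorem~\ref{thm:bergman_rubin} once the substitution is set up, whereas the lower bound must control an arbitrary circle candidate against the more rigid interval problem $I_n$. The crucial choice is the Pólya–Szegő perturbation $R=zq-q^\ast$: its anti-self-reciprocity is precisely what forces $(z^2-1)\mid R$ with the quotient $\widehat R$ carrying even multiplicities at $\pm1$, so that $\widetilde G$ translates back to a monic degree-$n$ polynomial on $[-1,1]$ weighted by exactly $(1-x)^{\rho_\alpha}(1+x)^{\rho_\beta}$, closing the loop. I also expect to spend some care on degenerate cases — a zero of $q$ at the origin (so $\deg q^\ast<2n+1$), or $q$ already having all its zeros on $|z|=1$ — and on verifying that $g^\ast$ is a genuine polynomial multiple of $(z-1)^{2\rho_\alpha-1}(z+1)^{2\rho_\beta-1}$ of the correct degree, so that it is admissible in \eqref{eq:unit_circle} and the uniqueness statement recalled in the Introduction applies.
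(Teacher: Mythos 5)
Your proposal is correct, and it rests on exactly the same three ingredients as the paper's proof — the substitution $x=(z+z^{-1})/2$, the P\'olya--Szeg\H{o} Lemma \ref{thm:polya-szego}, and the Erd\H{o}s--Lax-type equality of Theorem \ref{thm:bergman_rubin} — but it is organized along a genuinely different route. The paper starts from the circle minimizer, forms $P=zq-q^\ast$, and then introduces the auxiliary extremal problem \eqref{eq:minimization_unit_circle} over $2n$ unimodular zeros; both the identity $C_n=2^{n+\rho_\alpha+\rho_\beta-1}I_n$ and the minimality of the normalized derivative are extracted by collapsing the chain \eqref{eq:first_ineq}--\eqref{eq:third_eq} into equalities, and the conjugate pairing of the unimodular zeros (needed for the final descent to $[-1,1]$) is obtained only ``incidentally'' from the uniqueness of that auxiliary problem. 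You instead prove the two inequalities separately: the upper bound $C_n\le 2^{n+\rho_\alpha+\rho_\beta-1}I_n$ by lifting the interval minimizer of \eqref{eq:chebyshev_norm_jacobi_2} and applying Theorem \ref{thm:bergman_rubin} to its normalized derivative (correctly noting that $\rho_\alpha,\rho_\beta\ge 1/2$ is what makes all exponents $\ge 1$), and the lower bound by the same P\'olya--Szeg\H{o} combination $R=zq-q^\ast$, but with an explicit anti-self-reciprocity argument $z^{2n+2}R(1/z)=-R(z)$ forcing odd multiplicities at $\pm1$, so that $R/(z^2-1)$ has even multiplicities there and descends to a monic degree-$n$ competitor for $I_n$. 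This bypasses the auxiliary problem entirely and makes the symmetry bookkeeping more transparent than the paper's treatment; what it does not deliver directly is the by-product recorded in the paper's first corollary (uniqueness of the minimizer of \eqref{eq:minimization_unit_circle}), though that follows a posteriori from the theorem. The degenerate cases you flag are non-issues: $q^\ast(z)=z^{2n+1}q(1/z)$ holds as a polynomial identity even when some $a_k=0$, so $R$ is always monic of exact degree $2n+2$, and the leading-coefficient computation shows $g^\ast$ is the weight times a monic polynomial of degree $2n+1$, hence admissible, so uniqueness of the weighted Chebyshev polynomial on the circle yields the claimed identification.
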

	
	\begin{proof}
		We will prove this result in the opposite direction as it is stated by first analyzing the minimizer of \eqref{eq:unit_circle}. Assume that the points $a_k^\ast$ are uniquely chosen to satisfy
		\[\max_{|z| = 1}\left|(z-1)^{2\rho_\alpha-1}(z+1)^{2\rho_\beta-1}\prod_{k=1}^{2n+1}(z-a_k^\ast)\right| = C_n.
\]
		A result of Fej\'{e}r \cite{fejer22} says that the zeros of a complex Chebyshev polynomial corresponding to a compact set $\E$ are always situated in the convex hull of $\E$. In our setting, a similar reasoning ensures us that $|a_k^\ast|\leq 1$ holds for $k = 1,\dotsc 2n+1$. It is actually possible to show that this inequality is strict, see \cite{lachance-saff-varga79} for details. We form the combination
		\begin{equation}
			P(z) = z\prod_{k=1}^{2n+1}(z-a_k^\ast)-\prod_{k=1}^{2n+1}(1-\overline{a_k^\ast} z).
			\label{eq:P}
		\end{equation}
		It is immediate that $P$ is a monic polynomial of degree $2n+2$ and by Lemma \ref{thm:polya-szego} we conclude that $P$ has all its zeros on the unit circle. Since the minimizer of \eqref{eq:unit_circle} is unique, all $a_k^\ast$ must come in conjugate pairs with one possible exception. Due to symmetry of the weight function, this exceptional zero must always be real.
		Therefore,
		\[P(z) = z\prod_{k=1}^{2n+1}(z-a_k^\ast)-\prod_{k=1}^{2n+1}(1-a_k^\ast z)\]
		and as a consequence,
		\begin{align*}
			P(1) & = \prod_{k=1}^{2n+1}(1-a_k^\ast)-\prod_{k=1}^{2n+1}(1-a_k^\ast) = 0, \\
			P(-1) & = -\prod_{k=1}^{2n+1}(-1-a_k^\ast)-\prod_{k=1}^{2n+1}(1+a_k^\ast) = 0.	
		\end{align*}
		We conclude that
		\begin{equation}
			P(z) = (z-1)(z+1)\prod_{k=1}^{2n}(z-e^{i\theta_k^\ast})
		\end{equation}
		for some values $\theta_k^\ast\in [0,2\pi)$. 
		
		Now let $\varphi_k\in [0,2\pi)$ be chosen such that
		\begin{equation}
			\max_{|z| = 1}\left|(z-1)^{2\rho_\alpha}(z+1)^{2\rho_\beta}\prod_{k=1}^{2n}(z-e^{i\varphi_k})\right| = \min_{\theta_1,\dotsc,\theta_{2n}}\max_{|z| = 1}\left|(z-1)^{2\rho_\alpha}(z+1)^{2\rho_\beta}\prod_{k=1}^{2n}(z-e^{i\theta_k})\right|. 
			\label{eq:minimization_unit_circle}
		\end{equation}
		From the fact that 
		\[\prod_{k=1}^{2n+1}\left|1-a_k^\ast z\right| = \prod_{k=1}^{2n+1}|z-a_k^\ast|\]
		for $|z| = 1$ together with the representation in \eqref{eq:P}, we conclude from the triangle inequality that
		\begin{align}
			\max_{|z| = 1}\left|(z-1)^{2\rho_\alpha}(z+1)^{2\rho_\beta}\prod_{k=1}^{2n}(z-e^{i\varphi_k})\right| \leq \max_{|z| = 1}\left|(z-1)^{2\rho_\alpha-1}(z+1)^{2\rho_\beta-1}P(z)\right|\notag\\
			\leq 2 \max_{|z| = 1}\left|(z-1)^{2\rho_\alpha-1}(z+1)^{2\rho_\beta-1}\prod_{k=1}^{2n+1}(z-a_k^\ast)\right| = 2 C_n.   \label{eq:first_ineq}
		\end{align}
		On the other hand, the expression
		\[\frac{1}{2\rho_\alpha+2\rho_\beta+2n}\frac{d}{dz}(z-1)^{2\rho_\alpha}(z+1)^{2\rho_\beta}\prod_{k=1}^{2n}(z-e^{i\varphi_k})\]
		is of the form
		\[(z-1)^{2\rho_\alpha-1}(z+1)^{2\rho_\beta-1}\prod_{k=1}^{2n+1}(z-b_k)\]
		for some values of $b_k\in \C$. From the Gauss--Lucas Theorem we can actually conclude that $|b_k|\leq 1$. As it turns out, this derivative is a candidate for a minimizer of \eqref{eq:unit_circle}. Therefore,
		\begin{align}
			\max_{|z|=1}\left|\frac{1}{2\rho_\alpha+2\rho_\beta+2n}\frac{d}{dz}\left\{(z-1)^{2\rho_\alpha}(z+1)^{2\rho_\beta}\prod_{k=1}^{2n}(z-e^{i\varphi_k})\right\}\right|\notag\\ \geq \max_{|z| = 1}\left|(z-1)^{2\rho_\alpha-1}(z+1)^{2\rho_\beta-1}\prod_{k=1}^{2n+1}(z-a_k^\ast)\right|
			\label{eq:second_ineq}
		\end{align}
		and Theorem \ref{thm:bergman_rubin} immediately gives us that 
		\begin{align}
			2\max_{|z|=1}\left|\frac{1}{2\rho_\alpha+2\rho_\beta+2n}\frac{d}{dz}\left\{(z-1)^{2\rho_\alpha}(z+1)^{2\rho_\beta}\prod_{k=1}^{2n}(z-e^{i\varphi_k})\right\}\right|\notag \\ = \max_{|z| = 1}\left|(z-1)^{2\rho_\alpha}(z+1)^{2\rho_\beta}\prod_{k=1}^{2n}(z-e^{i\varphi_k})\right|.
			\label{eq:third_eq}
		\end{align}
		By combining \eqref{eq:first_ineq}, \eqref{eq:second_ineq} and \eqref{eq:third_eq}, we conclude that equality must hold in all the inequalities. 
		In particular,
		\begin{align}
		\max_{|z|=1}\left|\frac{1}{2\rho_\alpha+2\rho_\beta+2n}\frac{d}{dz}\left\{(z-1)^{2\rho_\alpha}(z+1)^{2\rho_\beta}\prod_{k=1}^{2n}(z-e^{i\varphi_k})\right\}\right|\notag\\ = \max_{|z| = 1}\left|(z-1)^{2\rho_\alpha-1}(z+1)^{2\rho_\beta-1}\prod_{k=1}^{2n+1}(z-a_k^\ast)\right|	
		\label{eq:equality_holds_derivative}
		\end{align}
		and 
		\begin{align}
			\max_{|z| = 1}\left|(z-1)^{2\rho_\alpha}(z+1)^{2\rho_\beta}\prod_{k=1}^{2n}(z-e^{i\varphi_k})\right| = \max_{|z| = 1}\left|(z-1)^{2\rho_\alpha-1}(z+1)^{2\rho_\beta-1}P(z)\right|. \label{eq:equality_minimizer_on_circle}
		\end{align}
		As a consequence of \eqref{eq:equality_holds_derivative} together with uniqueness of the minimizer, we have that 
		\[\frac{1}{2\rho_\alpha+2\rho_\beta+2n}\frac{d}{dz}\left\{(z-1)^{2\rho_\alpha}(z+1)^{2\rho_\beta}\prod_{k=1}^{2n}(z-e^{i\varphi_k})\right\} =(z-1)^{2\rho_\alpha-1}(z+1)^{2\rho_\beta-1}\prod_{k=1}^{2n+1}(z-a_k^\ast) \]
		and so the minimizing points $\varphi_k$, which were chosen to satisfy \eqref{eq:minimization_unit_circle}, are determined from the point set $\{a_k^\ast\}$. Incidentally we have established that the solution to \eqref{eq:minimization_unit_circle} is unique and as a consequence all nodes $e^{i\varphi_k}$ come in conjugate pairs. Furthermore, \eqref{eq:equality_minimizer_on_circle} implies that after a possible rearrangement, it holds that $\theta_k^\ast = \varphi_k$ for $k = 1,\dotsc,2n$.
		
		We now let $x= (z+z^{-1})/2$ and recall that $x\in [-1,1]$ when $|z| = 1$. Under this transformation, 
		\[(z-e^{i\theta_k})(z-e^{-i\theta_k}) = 2z(x-\cos\theta_k)\]
		so if we arrange the angles $\theta_k^\ast$ in such a way that $\theta_{k+n}^\ast = \theta_k^\ast+\pi$ for $k=1,\dotsc,n$, then
		\begin{equation}(z-1)^{2\rho_\alpha}(z+1)^{2\rho_\beta}\prod_{k=1}^{2n}(z-e^{i\theta_k\ast}) = (2z)^{\rho_\alpha+\rho_\beta+n}(x-1)^{\rho_\alpha}(x+1)^{\rho_\beta}\prod_{k=1}^{n}(x-\cos\theta_k^\ast).\label{eq:relation_interval}\end{equation}
		In conclusion, to each expression of the form
		\[(x-1)^{\rho_\alpha}(x+1)^{\rho_\beta}\prod_{k=1}^{n}(x-\cos\theta_k),\]
		there corresponds a function
		\[(z-1)^{2\rho_\alpha}(z+1)^{2\rho_\beta}\prod_{k=1}^{n}(z-e^{i\theta_k})(z-e^{-i\theta_k})\]
		and their absolute values are related through multiplication with the constant $2^{\rho_\alpha+\rho_\beta+n}$. Since the left-hand side of \eqref{eq:relation_interval} is minimal in terms of maximal modulus on the unit circle, we conclude that 
		\[\max_{x\in [-1,1]}\left|(x-1)^{\rho_\alpha}(x+1)^{\rho_\beta}\prod_{k=1}^{n}(x-\cos\theta_k^\ast)\right| = I_n. 
		\]
		The result now follows from the uniqueness of the minimizer of \eqref{eq:chebyshev_norm_jacobi_2}.\hfill\qedhere
	\end{proof}
	
	We end this section by listing some consequences of Theorem \ref{thm:main_result}.
	\begin{corollary}
		When $\rho_\alpha,\rho_\beta\geq 1/2$, there is a unique configuration  $\theta_1^\ast,\dotsc,\theta_{2n}^\ast\in [0,2\pi)$ such that 
		\begin{equation}
			\max_{|z| = 1}\left|(z-1)^{2\rho_\alpha}(z+1)^{2\rho_\beta}\prod_{k=1}^{2n}(z-e^{i\theta_k^\ast})\right| = \min_{\theta_1,\dotsc,\theta_{2n}}\max_{|z| = 1}\left|(z-1)^{2\rho_\alpha}(z+1)^{2\rho_\beta}\prod_{k=1}^{2n}(z-e^{i\theta_k})\right|. 
		\end{equation}
	\end{corollary}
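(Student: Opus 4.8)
The plan is to extract this statement from the proof of Theorem~\ref{thm:main_result}, where the configuration in \eqref{eq:minimization_unit_circle} is used and its uniqueness is recorded in passing; I reconstruct the argument. Existence is immediate: the map $(\theta_1,\dots,\theta_{2n})\mapsto \max_{|z|=1}\big|(z-1)^{2\rho_\alpha}(z+1)^{2\rho_\beta}\prod_{k=1}^{2n}(z-e^{i\theta_k})\big|$ is jointly continuous and $2\pi$-periodic in each variable, hence attains its infimum on the torus $[0,2\pi]^{2n}$; write $J_n$ for this minimal value and recall that the proof of Theorem~\ref{thm:main_result} identifies $J_n=2C_n$.

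For uniqueness, let $\theta_1^\ast,\dots,\theta_{2n}^\ast$ be any minimizing configuration and set $Q(z)=(z-1)^{2\rho_\alpha}(z+1)^{2\rho_\beta}\prod_{k=1}^{2n}(z-e^{i\theta_k^\ast})$. Computing the logarithmic derivative $Q'/Q$ shows that $T(z):=\frac{1}{2\rho_\alpha+2\rho_\beta+2n}Q'(z)$ has the form $(z-1)^{2\rho_\alpha-1}(z+1)^{2\rho_\beta-1}\prod_{k=1}^{2n+1}(z-b_k)$ (and, by Gauss--Lucas, with all $|b_k|\le 1$), hence is one of the competitors in \eqref{eq:unit_circle}. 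Theorem~\ref{thm:bergman_rubin} gives $\max_{|z|=1}|Q'(z)|=(\rho_\alpha+\rho_\beta+n)\max_{|z|=1}|Q(z)|=(\rho_\alpha+\rho_\beta+n)J_n$, so $\max_{|z|=1}|T(z)|=\tfrac12 J_n=C_n$; thus $T$ is in fact a minimizer of \eqref{eq:unit_circle}. Since that minimizer is unique (the classical uniqueness theorem for complex Chebyshev problems applies, the weight $|z-1|^{2\rho_\alpha-1}|z+1|^{2\rho_\beta-1}$ being nonzero off the finite set $\{\pm1\}$), the function $T$, hence $Q'$, is the same for every minimizing configuration.

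It remains to recover the nodes from $Q'$. Writing $Q(z)=(z-1)^{2\rho_\alpha}(z+1)^{2\rho_\beta}S(z)$ with $S$ monic of degree $2n$, one finds $Q'(z)=(z-1)^{2\rho_\alpha-1}(z+1)^{2\rho_\beta-1}\big[(2\rho_\alpha(z+1)+2\rho_\beta(z-1))S(z)+(z^2-1)S'(z)\big]$, and the linear map $S\mapsto(2\rho_\alpha(z+1)+2\rho_\beta(z-1))S+(z^2-1)S'$ on polynomials of degree $\le 2n$ is injective because $\rho_\alpha,\rho_\beta\ge\tfrac12$ forces the leading coefficient $(2\rho_\alpha+2\rho_\beta+\deg S)\,\mathrm{lead}(S)$ to be nonzero whenever $S\neq 0$. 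Hence $S$, and with it the multiset $\{e^{i\theta_k^\ast}\}$, is uniquely determined by $Q'$, which completes the proof. The only genuinely delicate point is this recovery step: one must be certain that differentiating $Q$ destroys no information about the nodes, and this is exactly where the hypothesis $\rho_\alpha,\rho_\beta\ge\tfrac12$ enters (it keeps $(z-1)^{2\rho_\alpha-1}(z+1)^{2\rho_\beta-1}$ bounded near $\pm1$, makes $T$ an admissible competitor for \eqref{eq:unit_circle}, and gives the injectivity just used); the rest is a direct appeal to Theorems~\ref{thm:bergman_rubin} and~\ref{thm:main_result} and to classical Chebyshev uniqueness.
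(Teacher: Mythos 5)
Your proposal is correct and follows essentially the same route as the paper, where this corollary is extracted from the proof of Theorem~\ref{thm:main_result}: any minimizing configuration's normalized derivative is a competitor for \eqref{eq:unit_circle}, Theorem~\ref{thm:bergman_rubin} together with $J_n=2C_n$ forces it to attain the value $C_n$, and uniqueness of the circle minimizer pins it down. Your only addition is to make explicit the recovery of the nodes from $Q'$ via injectivity of $S\mapsto(2\rho_\alpha(z+1)+2\rho_\beta(z-1))S+(z^2-1)S'$, a step the paper asserts without detail.
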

	Our main conclusion is the following monotonicity result which constitutes the final puzzle piece to proving the third case in Theorem \ref{main:thm}.
	\begin{corollary}
		If $\rho_\alpha,\rho_\beta \geq 1/2$, then $\cW_n(\rho_\alpha,\rho_\beta)$ decays monotonically to $2^{1-\rho_\alpha-\rho_\beta}$ as $n\rightarrow \infty$ and for every $n$, we have that
		\[\cW_n(\rho_\alpha,\rho_\beta)\leq \left(\frac{2\rho_\alpha}{\rho_\alpha+\rho_\beta}\right)^{\rho_\alpha}\left(\frac{2\rho_\beta}{\rho_\alpha+\rho_\beta}\right)^{\rho_\beta}.\]
	\end{corollary}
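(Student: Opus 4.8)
The plan is to reduce everything to the unit-circle Chebyshev problem furnished by Theorem \ref{thm:main_result}. First recall that, by the classical existence/uniqueness result quoted in the introduction (our weight $(1-x)^{\rho_\alpha}(1+x)^{\rho_\beta}$ is bounded on $[-1,1]$ and strictly positive on the infinite set $(-1,1)$), the minimizing nodes in \eqref{eq:widom_factor} are real and lie in $[-1,1]$, so that $\cW_n(\rho_\alpha,\rho_\beta)=2^nI_n$ with $I_n$ as in \eqref{eq:chebyshev_norm_jacobi_2}. Combining this with the identity $C_n=2^{\,n+\rho_\alpha+\rho_\beta-1}I_n$ of Theorem \ref{thm:main_result} yields the clean relation
\[\cW_n(\rho_\alpha,\rho_\beta)=2^{\,1-\rho_\alpha-\rho_\beta}\,C_n,\qquad n\ge 0,\]
where $C_n$ is the degree-$(2n+1)$ weighted Chebyshev norm on $|z|=1$ for the weight $|z-1|^{2\rho_\alpha-1}|z+1|^{2\rho_\beta-1}$, see \eqref{eq:unit_circle}. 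For $n=0$ this reads $\cW_0(\rho_\alpha,\rho_\beta)=\max_{x\in[-1,1]}(1-x)^{\rho_\alpha}(1+x)^{\rho_\beta}$, which is also immediate from \eqref{eq:widom_factor} since the product there is empty. Thus the whole statement reduces to understanding the sequence $\{C_n\}_{n\ge0}$.

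Monotonicity is now transparent. If $Q$ is the monic polynomial of degree $2n+1$ realizing $C_n$, then $z^2Q(z)$ is monic of degree $2n+3$ and, because $|z^2|=1$ on $|z|=1$, it has the same weighted sup norm; hence $C_{n+1}\le C_n$, and therefore $\cW_{n+1}(\rho_\alpha,\rho_\beta)\le \cW_n(\rho_\alpha,\rho_\beta)$ for every $n\ge0$. (This device is exactly what is unavailable on $[-1,1]$, where multiplying the extremal polynomial by a linear factor inflates the maximum; it is the payoff of passing to the circle.) Being non-increasing, $\cW_n(\rho_\alpha,\rho_\beta)$ converges, and by the asymptotic part of Theorem \ref{main:thm} — a consequence of Bernstein's Theorem \ref{thm:bernstein} together with \eqref{eq:asymptotic_jacobi_cheybshev_norm} — the limit equals $2^{1-\rho_\alpha-\rho_\beta}$; monotonicity then also gives $\inf_n\cW_n(\rho_\alpha,\rho_\beta)=2^{1-\rho_\alpha-\rho_\beta}$, so the convergence is from above.

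For the uniform upper bound it now suffices to evaluate $\cW_0(\rho_\alpha,\rho_\beta)=\max_{x\in[-1,1]}(1-x)^{\rho_\alpha}(1+x)^{\rho_\beta}$. The function $\rho_\alpha\log(1-x)+\rho_\beta\log(1+x)$ is strictly concave on $(-1,1)$ with derivative vanishing at $x^\ast=\tfrac{\rho_\beta-\rho_\alpha}{\rho_\alpha+\rho_\beta}\in(-1,1)$, and since $1-x^\ast=\tfrac{2\rho_\alpha}{\rho_\alpha+\rho_\beta}$ and $1+x^\ast=\tfrac{2\rho_\beta}{\rho_\alpha+\rho_\beta}$ we get
\[\cW_0(\rho_\alpha,\rho_\beta)=\left(\frac{2\rho_\alpha}{\rho_\alpha+\rho_\beta}\right)^{\rho_\alpha}\left(\frac{2\rho_\beta}{\rho_\alpha+\rho_\beta}\right)^{\rho_\beta}.\]
Since $\{\cW_n(\rho_\alpha,\rho_\beta)\}_{n\ge0}$ is non-increasing, $\cW_n(\rho_\alpha,\rho_\beta)\le\cW_0(\rho_\alpha,\rho_\beta)$ for all $n$, which is precisely the asserted bound (with equality exactly at $n=0$).

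The only real input is Theorem \ref{thm:main_result}; granted that, the corollary is short, and the main thing to watch is that the relation $\cW_n=2^{1-\rho_\alpha-\rho_\beta}C_n$ and the $z^2$-argument are being used at $n=0$ as well — this is why I record the trivial value $\cW_0(\rho_\alpha,\rho_\beta)$ explicitly; the $n=0$ instance of Theorem \ref{thm:main_result} is itself fine, as its proof only requires $2\rho_\alpha,2\rho_\beta\ge1$ to invoke Theorem \ref{thm:bergman_rubin}, which holds on the whole range $\rho_\alpha,\rho_\beta\ge1/2$. One can moreover observe that $C_{n+1}<C_n$ strictly unless the $C_{n+1}$-minimizer produced by Theorem \ref{thm:main_result} coincides with $z^2$ times the $C_n$-minimizer, which happens only when $\rho_\alpha=\rho_\beta=1/2$; hence the decay of $\cW_n(\rho_\alpha,\rho_\beta)$ is strict except in that classical constant case.
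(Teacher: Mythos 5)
Your proposal is correct and follows essentially the same route as the paper: transfer to the unit-circle problem via Theorem \ref{thm:main_result}, get monotonicity from the $z^2$-multiplication trick, and obtain the uniform bound from the $n=0$ value $\max_{x\in[-1,1]}(1-x)^{\rho_\alpha}(1+x)^{\rho_\beta}=\left(\frac{2\rho_\alpha}{\rho_\alpha+\rho_\beta}\right)^{\rho_\alpha}\left(\frac{2\rho_\beta}{\rho_\alpha+\rho_\beta}\right)^{\rho_\beta}$ together with the decay (the paper phrases this through the $n=0$ instance of Theorem \ref{thm:main_result}, you read it off directly from the empty product — an immaterial difference). Your closing remark about strict decrease except when $\rho_\alpha=\rho_\beta=1/2$ is an unproven aside not needed for the corollary, but it does not affect the argument.
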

	\begin{proof}
		We begin by verifying the monotonicity of the sequence of Widom factors. From \eqref{eq:widom_factor} and Theorem \ref{thm:main_result} we know that
		\begin{align*}
		\cW_n(\rho_\alpha,\rho_\beta) = 
		2^{n}\min_{\theta_1,\dotsc,\theta_n}\max_{x\in [-1,1]}(1-x)^{\rho_\alpha}(1+x)^{\rho_\beta}\left|\prod_{k=1}^{n}(x-\cos\theta_k)\right| \\ =
		2^{1-\rho_\alpha-\rho_\beta}\min_{a_1,\dotsc,a_{2n+1}}\max_{|z| = 1}\left|(z-1)^{2\rho_\alpha-1}(z+1)^{2\rho_\beta-1}\prod_{k=1}^{2n+1}(z-a_k)\right|.		
		\end{align*}
Clearly the right-hand side decreases monotonically in $n$ since multiplying by $z^2$ does not change the absolute value and
		\begin{multline*}
			\max_{|z|=1}\left|z^2(z-1)^{2\rho_\alpha-1}(z+1)^{2\rho_\beta-1}\prod_{k=1}^{2n+1}(z-a_k^\ast)\right| \\
			\geq \min_{a_1,\dotsc,a_{2(n+1)+1}}\max_{|z| = 1}\left|(z-1)^{2\rho_\alpha-1}(z+1)^{2\rho_\beta-1}\prod_{k=1}^{2(n+1)+1}(z-a_k)\right|.
		\end{multline*}
Hence monotonicity of $\cW_n(\rho_\alpha,\rho_\beta)$ follows.
		
		To show the upper bound, we note that Theorem \ref{thm:main_result} is still valid if $n = 0$ meaning that the expression
	\begin{equation}
		\frac{1}{2\rho_\alpha+2\rho_\beta}\frac{d}{dz}\left\{(z-1)^{2\rho_\alpha}(z+1)^{2\rho_\beta}\right\} = (z-1)^{2\rho_\alpha-1}(z+1)^{2\rho_\beta-1}(z-a^\ast)
	\end{equation}
	minimizes \eqref{eq:unit_circle} in the case where $n=0$. Consequently,
	\begin{multline}
	2^{1-\rho_\alpha-\rho_\beta}\min_{a}\max_{|z|=1}\left|(z-1)^{2\rho_\alpha-1}(z+1)^{2\rho_\beta-1}(z-a)\right|  = \max_{x\in [-1,1]}\left|(1-x)^{\rho_\alpha}(1+x)^{\rho_\beta}\right| \\	
	 = \left(1+\frac{\rho_\alpha-\rho_\beta}{\rho_\alpha+\rho_\beta}\right)^{\rho_\alpha}\left(1-\frac{\rho_\alpha-\rho_\beta}{\rho_\alpha+\rho_\beta}\right)^{\rho_\beta} = \left(\frac{2\rho_\alpha}{\rho_\alpha+\rho_\beta}\right)^{\rho_\alpha}\left(\frac{2\rho_\beta}{\rho_\alpha+\rho_\beta}\right)^{\rho_\beta}
	\end{multline}
	and the upper bound now follows from the monotonicity of $\cW_n(\rho_\alpha,\rho_\beta)$. \hfill\qedhere
	\end{proof}
	
		\section{What about the remaining parameters $\rho_\alpha,\rho_\beta$?}
	To conclude our investigation we question what can be said concerning the monotonicity properties of $\cW_n(\rho_\alpha,\rho_\beta)$ for the remaining values of the parameters.  These are precisely those which satisfy 
	\[0<\rho_\alpha<1/2\,\, \text{ and }\,\,\rho_\beta\geq 1/2\quad \text{or}\quad 0<\rho_\beta<1/2\,\, \text{ and }\,\, \rho_\alpha\geq 1/2,\]
	as illustrated by the blank strips in Figure \ref{fig:speculative_domain}.
	
	\begin{figure}
\centering
\begin{tikzpicture}[scale=3.5]
		\draw[->] (-.5,0) -- (2,0) node[right] {$\rho_\alpha$}; 
	    \draw[->] (0,-.5) -- (0,2) node[above] {$\rho_\beta$};
	\node (r0) at ( 0,  0) {};
   \node (s0) at (0, 0.5) {}; 
   \node (s1) at ( 0.5, 0.5) {}; 
   \node (s2) at ( 0.5, 0) {}; 
   
   \node (s3) at ( 0.5, 2) {}; 
	   \node (s4) at ( 2, 2) {}; 
	   	   \node (s5) at ( 2, .5) {}; 
   \fill[fill=gray] (r0.center)--(s0.center)--(s1.center)--(s2.center);
   \path[draw] (r0.center)--(s0.center);
   \path[draw] (s0.center)--(s1.center);
   \path[draw] (s1.center)--(s2.center);
   \path[draw] (s2.center)--(r0.center);

	\fill[fill=gray] (s1.center)--(s3.center)--(s4.center)--(s5.center);
	   \path[draw] (s1.center)--(s5.center);
   \path[draw] (s1.center)--(s3.center);
		
	 \draw [red,thick,domain=-45:135] plot ({0.25+cos(\x)/sqrt(8)}, {0.25+sin(\x)/sqrt(8)});
    	\draw[color=black, fill=white] (r0) circle (.02);
   \draw[color=black, fill=white]  (s0) circle (.02);
   \draw[color=black, fill=white]  (s1) circle (.02);
      \draw[color=black, fill=white]  (s2) circle (.02);

	\end{tikzpicture}
				
				\caption{The determined cases including the circle $\Big\{(1/4+\rho_\alpha)^2+(1/4+\rho_\beta)^2 = 1/8\Big\}.$}
		\label{fig:speculative_domain}

\end{figure}
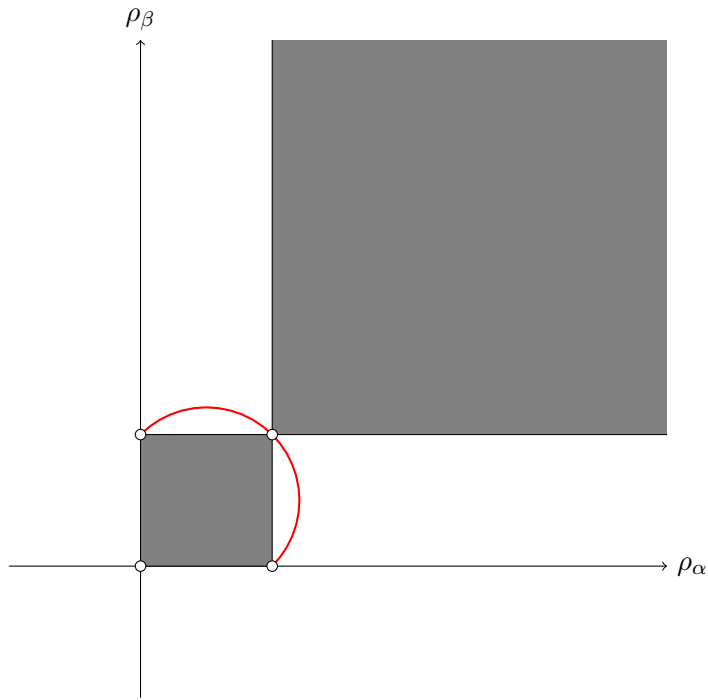

	First of all, we note the following continuity of the Widom factors. We mention in passing that a general result for the continuity of $L^p$ Widom factors with $p<\infty$ is provided in \cite[Theorem 2.1]{alpan-zinchenko20}.
	\begin{proposition}
		\label{prop:continuity_widom}
		For fixed $n\geq 0$, the Widom factor $\cW_n(\rho_\alpha,\rho_\beta)$ is continuous in $\rho_\alpha,\rho_\beta \geq 0$.
	\end{proposition}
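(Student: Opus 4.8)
The plan is to reduce the statement to continuity of the weighted Chebyshev constant and then fight the (only) source of trouble, the failure of joint continuity of the weight at the endpoints. Write $\cW_n(\rho_\alpha,\rho_\beta)=2^nE_n(\rho_\alpha,\rho_\beta)$ where
\[E_n(\rho_\alpha,\rho_\beta):=\min\Bigl\{\max_{x\in[-1,1]}(1-x)^{\rho_\alpha}(1+x)^{\rho_\beta}\,|P(x)|:\ P\ \text{monic of degree}\ n\Bigr\},\]
and set $w_{\rho_\alpha,\rho_\beta}(x):=(1-x)^{\rho_\alpha}(1+x)^{\rho_\beta}$, with the convention $0^0=1$ so that $w_{\rho_\alpha,\rho_\beta}$ is a well-defined bounded (but possibly discontinuous) function on $[-1,1]$. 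It suffices to prove that $E_n$ is continuous on $[0,\infty)^2$. The structural obstacle is that $(\rho_\alpha,\rho_\beta,x)\mapsto w_{\rho_\alpha,\rho_\beta}(x)$ is not jointly continuous: at $x=1$ its value drops from $2^{\rho_\beta}$ to $0$ as $\rho_\alpha$ leaves $0$, and symmetrically at $x=-1$. The key observation is that it is nonetheless \emph{upper semicontinuous} on $[0,\infty)^2\times[-1,1]$: for $x\in[0,1]$ one has $w_{\rho_\alpha,\rho_\beta}(x)\le(1+x)^{\rho_\beta}$ with the right side continuous, for $x\in[-1,0]$ one has $w_{\rho_\alpha,\rho_\beta}(x)\le(1-x)^{\rho_\alpha}$, and away from $x=\pm1$ the function is genuinely continuous. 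This semicontinuity is the hinge.

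For upper semicontinuity of $E_n$, fix $\rho^0=(\rho_\alpha^0,\rho_\beta^0)$ and let $P^\ast$ be a minimizing monic polynomial for it, so $E_n(\rho^0)=\max_{x\in[-1,1]}w_{\rho^0}(x)|P^\ast(x)|$. Since $|P^\ast|$ is continuous and nonnegative, $(\rho,x)\mapsto w_\rho(x)|P^\ast(x)|$ is again upper semicontinuous on $[0,\infty)^2\times[-1,1]$, and a compactness argument — for $\rho_j\to\rho^0$ pick maximizers $x_j\in[-1,1]$ and pass to a convergent subsequence $x_j\to x^\ast$ — shows that $\rho\mapsto\max_{x\in[-1,1]}w_\rho(x)|P^\ast(x)|$ is upper semicontinuous. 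Since $E_n(\rho)\le\max_x w_\rho(x)|P^\ast(x)|$ for every $\rho$, we conclude $\limsup_{\rho\to\rho^0}E_n(\rho)\le\max_x w_{\rho^0}(x)|P^\ast(x)|=E_n(\rho^0)$.

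For lower semicontinuity, fix $\rho^0$ and a sequence $\rho_j\to\rho^0$, and let $P_j$ be the minimizer for $\rho_j$. By a Fej\'er-type argument, the same one used in the proof of Theorem~\ref{thm:main_result}, all zeros of $P_j$ lie in $[-1,1]$; hence the monic polynomials $P_j$ have uniformly bounded coefficients and, after passing to a subsequence, $P_j\to Q$ uniformly on $[-1,1]$ with $Q$ monic of degree $n$. For each fixed $x$ in the \emph{open} interval $(-1,1)$ the exponents are harmless, so $w_{\rho_j}(x)\to w_{\rho^0}(x)$ and therefore $E_n(\rho_j)\ge w_{\rho_j}(x)|P_j(x)|\to w_{\rho^0}(x)|Q(x)|$. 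Taking the supremum over $x\in(-1,1)$ and using that $w_{\rho^0}|Q|$ is continuous on all of $[-1,1]$ (the exponents are now frozen at $\rho^0$), we obtain $\liminf_j E_n(\rho_j)\ge\max_{x\in[-1,1]}w_{\rho^0}(x)|Q(x)|\ge E_n(\rho^0)$, the last inequality because $Q$ is an admissible monic competitor for the parameters $\rho^0$. Combining the two bounds gives continuity of $E_n$, hence of $\cW_n=2^nE_n$.

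I expect the point needing the most care to be the boundary of the parameter domain, i.e.\ where $\rho_\alpha=0$ or $\rho_\beta=0$, since this is exactly where the weight loses joint continuity. The scheme above is arranged so that the semicontinuity direction that must confront the discontinuity at $x=\pm1$ uses only the one-sided estimates $w_{\rho_\alpha,\rho_\beta}(x)\le(1+x)^{\rho_\beta}$ on $[0,1]$ and $w_{\rho_\alpha,\rho_\beta}(x)\le(1-x)^{\rho_\alpha}$ on $[-1,0]$, whereas the opposite direction is tested exclusively at interior points of $(-1,1)$, where every function involved is continuous. The secondary ingredient — precompactness of the family of minimizers via localization of their zeros to $[-1,1]$ — is standard but should be stated explicitly; the case $n=0$ is included in this scheme with $P^\ast=Q=1$.
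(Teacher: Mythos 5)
Your proof is correct, but it takes a genuinely different route from the paper. The paper argues by a direct two-sided perturbation estimate: it compares the minimizing nodes for $(\rho_\alpha,\rho_\beta)$ and $(\tilde\rho_\alpha,\tilde\rho_\beta)$ against each other via minimality, and the whole weight of the argument rests on the claim that, for $(\tilde\rho_\alpha,\tilde\rho_\beta)$ close to $(\rho_\alpha,\rho_\beta)$, the two weighted sup-norms differ by at most $\varepsilon$ \emph{uniformly over all monic polynomials with zeros in $[-1,1]$}; chaining the resulting inequalities gives $|\cW_n(\tilde\rho_\alpha,\tilde\rho_\beta)-\cW_n(\rho_\alpha,\rho_\beta)|\le 2^n\varepsilon$. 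That uniform claim is stated without proof and is itself slightly delicate exactly where you locate the difficulty (at $\rho_\alpha=0$ or $\rho_\beta=0$ the weights do not converge uniformly near the endpoints, and one needs equicontinuity of the family $x\mapsto w(x)|\prod(x-a_k)|$ to see that the \emph{maxima} still converge uniformly in the nodes). Your argument sidesteps this uniformity issue by splitting continuity into two semicontinuity bounds: upper semicontinuity by testing the $\rho^0$-minimizer at nearby parameters together with joint upper semicontinuity of the weight, and lower semicontinuity by compactness (normal families) of the minimizers, whose zeros lie in $[-1,1]$, tested only at interior points where everything is continuous. What the paper's route buys is brevity and an explicit quantitative modulus ($2^n\varepsilon$); what yours buys is that the endpoint discontinuity of $(\rho,x)\mapsto w_\rho(x)$ is confronted only through one-sided bounds, at the cost of invoking precompactness of the minimizer family. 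One small point of care in your write-up: your one-sided bounds $w_{\rho_\alpha,\rho_\beta}(x)\le(1+x)^{\rho_\beta}$ on $[0,1]$ and $w_{\rho_\alpha,\rho_\beta}(x)\le(1-x)^{\rho_\alpha}$ on $[-1,0]$ establish joint upper semicontinuity at $(\rho^0,\pm 1)$ only when the corresponding exponent of $\rho^0$ vanishes; when it is positive you need (and easily have, since the relevant exponent stays bounded away from $0$ along the approach) genuine joint continuity at that endpoint, which your phrase ``away from $x=\pm1$'' does not literally cover, so this case should be stated explicitly.
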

	\begin{proof}
		Let $\tilde\theta_k^\ast$ and $\theta_k^\ast$ denote the minimizing nodes associated with the parameters $(\tilde\rho_\alpha,\tilde\rho_\beta)$ and $(\rho_\alpha,\rho_\beta)$, respectively. For any choice of parameters we have
		\begin{align*}
			\max_{x\in[-1,1]}\Big|(1-x)^{\tilde\rho_\alpha}(1+x)^{\tilde\rho_\beta}\prod_{k=1}^{n}(x-\cos\tilde\theta_k^\ast)\Big|&\leq \max_{x\in[-1,1]}\Big|(1-x)^{\tilde\rho_\alpha}(1+x)^{\tilde\rho_\beta}\prod_{k=1}^{n}(x-\cos\theta_k^\ast)\Big|
		\end{align*}
		and vice versa when the roles 
		of the parameters $(\rho_\alpha,\rho_\beta)$ and $(\tilde\rho_\alpha,\tilde\rho_\beta)$ are interchanged.
		
		Given $\varepsilon>0$ it is always possible to choose $(\tilde\rho_\alpha, \tilde\rho_\beta)$ close enough to $(\rho_\alpha,\rho_\beta)$ in a manner that, independently of the choice of $a_k\in [-1,1]$, it holds that
		\begin{align*}
		\left|\max_{x\in[-1,1]}\Big|(1-x)^{\rho_\alpha}(1+x)^{\rho_\beta}\prod_{k=1}^{n}(x-a_k)\Big|-\max_{x\in[-1,1]}\Big|(1-x)^{\tilde\rho_\alpha}(1+x)^{\tilde\rho_\beta}\prod_{k=1}^{n}(x-a_k)\Big|\right|\leq \varepsilon. \\
		\end{align*}
			We thus find that 
	\begin{align*}
			\max_{x\in[-1,1]}\Big|(1-x)^{\tilde\rho_\alpha}(1+x)^{\tilde\rho_\beta}\prod_{k=1}^{n}(x-\cos\tilde\theta_k^\ast)\Big|&\leq \max_{x\in[-1,1]}\Big|(1-x)^{\tilde\rho_\alpha}(1+x)^{\tilde\rho_\beta}\prod_{k=1}^{n}(x-\cos\theta_k^\ast)\Big| \\
			& \leq \max_{x\in[-1,1]}\Big|(1-x)^{\rho_\alpha}(1+x)^{\rho_\beta}\prod_{k=1}^{n}(x-\cos\theta_k^\ast)\Big|+\varepsilon \\
		\end{align*}
		and therefore
		\[\max_{x\in[-1,1]}\Big|(1-x)^{\tilde\rho_\alpha}(1+x)^{\tilde\rho_\beta}\prod_{k=1}^{n}(x-\cos\tilde\theta_k^\ast)\Big|-\max_{x\in[-1,1]}\Big|(1-x)^{\rho_\alpha}(1+x)^{\rho_\beta}\prod_{k=1}^{n}(x-\cos\theta_k^\ast)\Big|\leq \varepsilon.\]
		By symmetry, we may also deduce that the negative of the left-hand side is $\leq \varepsilon$
		when $(\tilde\rho_\alpha, \tilde\rho_\beta)$ is sufficiently close to $(\rho_\alpha,\rho_\beta)$. This implies the desired continuity of $\cW_n(\rho_\alpha,\rho_\beta)$ with respect to the parameters $\rho_\alpha,\rho_\beta$. 
		
		\hfill\qedhere
	\end{proof}

	When $\rho_\alpha,\rho_\beta\in [0,1/2]$, we know from Theorem \ref{main:thm} (part 2) that \[\cW_n(\rho_\alpha,\rho_\beta)\leq 2^{1-\rho_\alpha-\rho_\beta}.\] 
The same theorem (part 3) shows that	
	\[\cW_n(\rho_\alpha,\rho_\beta)\geq 2^{1-\rho_\alpha-\rho_\beta}\]
	for $\rho_\alpha,\rho_\beta \geq 1/2$. In particular, for a fixed $n$, Proposition \ref{prop:continuity_widom} implies the existence of parameters where the equality \[\cW_n(\rho_\alpha,\rho_\beta) = 2^{1-\rho_\alpha-\rho_\beta}\] is attained. In fact, such parameters exist on any arc (in the first quadrant) connecting a point of $[0,1/2]\times [0,1/2]$ with a point in $[1/2,\infty)\times[1/2,\infty)$. However, these parameters may very well be $n$ dependent. A natural question is for which parameters such an equality occurs.
	
	If we allow ourselves to assume that \eqref{eq:widom_upper_bound_lower_square} can be extended slightly outside the parameter domain $(\rho_\alpha, \rho_\beta)\in [0, 1/2] \times [0, 1/2]$, it is natural to consider parameters for which \eqref{eq:c_2} vanishes. This occurs precisely on the circular segment 
	\begin{equation}
		\left\{(\rho_\alpha,\rho_\beta) = \left(\frac{1}{4}+\frac{\cos\theta}{\sqrt{8}},\frac{1}{4} +\frac{\sin \theta}{\sqrt{8}}\right): \theta\in \left[-\frac{\pi}{4},\frac{3\pi}{4}\right] \right\}
		\label{eq:speculative_set}
	\end{equation}
	which is illustrated in Figure \ref{fig:speculative_domain}. To be specific, we recall that $M_n(\alpha,\beta)$ defined in \eqref{eq:M_n} provides an upper bound of $\cW_n(\rho_\alpha,\rho_\beta)$ and that
	\[\frac{M_{n+1}(\alpha,\beta)}{M_n(\alpha,\beta)} = \frac{(n+q+1)(n+\alpha+\beta+1)\left(n+\frac{\alpha+\beta+1}{2}\right)^{q-\frac{1}{2}}}{\left(n+\frac{\alpha+\beta+1}{2}+1\right)^{q+\frac{1}{2}}(n+\frac{\alpha+\beta}{2}+1)}.\]
	By replacing all occurrences of the natural number $n$ with the real variable $x$ on the right-hand side, we obtain the function $f(x)$ from \eqref{eq:thefunctionf}. We previously saw in \eqref{eq:logarithmic_derivative} that 
	\begin{equation*}
				\frac{f'(x)}{f(x)}=\frac{c_2(\alpha,\beta)x^2+c_1(\alpha,\beta)x+c_0(\alpha,\beta)}{(x+\alpha+1)(x+\alpha+\beta+1)(x+\frac{\alpha+\beta}{2})(x+\frac{\alpha+\beta}{2}+1)(x+\frac{\alpha+\beta}{2}+\frac{3}{2})}\end{equation*}
	and that the leading term $c_2(\alpha,\beta)x^2$ vanishes precisely when $|\alpha| = |\beta| = 1/2$. This indicates that the convergence of $f'(x)/f(x)\rightarrow 0$ as $x\rightarrow \infty$ is ``rapid'' in this case. If $\alpha^2+\beta^2>1/4$, then $f'(x)/f(x)$ will eventually be positive if $x$ is large enough. We therefore believe that the monotonicity properties of $M_{n}(\alpha,\beta)$ change somewhere in the vicinity of the circle 
	\begin{equation}
		\Big\{(\alpha,\beta):\alpha,\beta\geq 0,\, \alpha^2+\beta^2 = 1/4\Big\}.
		\label{eq:circle_first_param}
	\end{equation}
	Replacing the variables $(\alpha,\beta)$ by $(\rho_\alpha,\rho_\beta)$ transfers the circular segment \eqref{eq:circle_first_param} precisely to \eqref{eq:speculative_set}. 
	\begin{figure}[h!]
		\centering 
		\includegraphics[width = 0.8\textwidth]{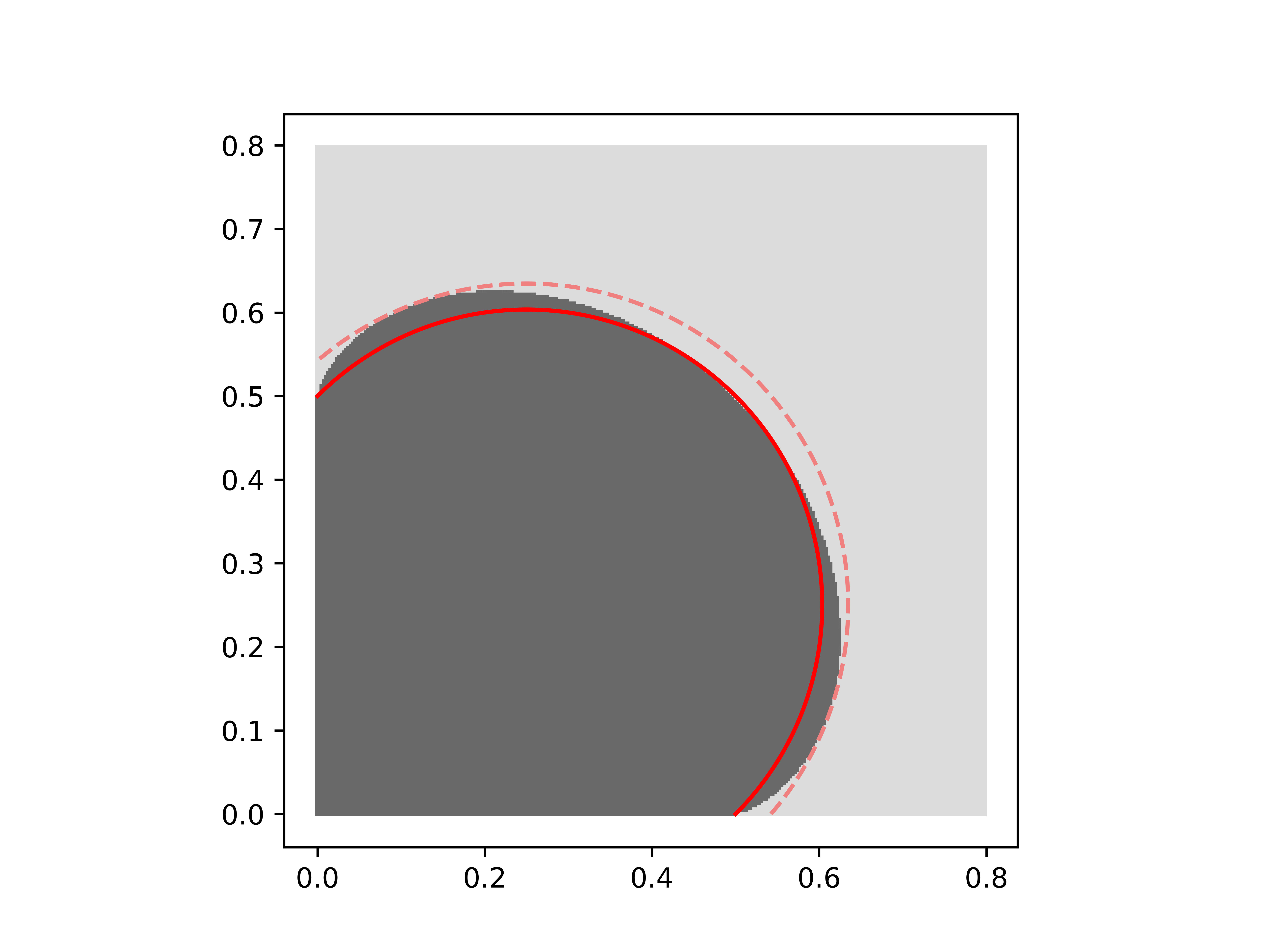}
		\caption{Numerical computations detailing the monotonicity properties of Widom factors for parameters $(\rho_\alpha,\rho_\beta)$ in the grid $[0,0.8]\times [0,0.8]$, discretized to contain 62 500 points. If monotonic increase is observed for the initial 10 Widom factors, then we color the square situated at $(\rho_\alpha,\rho_\beta)$ dark gray. If we observe monotonic decrease, the square situated at $(\rho_\alpha,\rho_\beta)$ is colored light gray. The circular segment \eqref{eq:speculative_set} is colored red, the set $\{(\rho_\alpha,\rho_\beta): (\rho_\alpha-1/4)^2+(\rho_\beta-1/4)^2 =1.1836088889/8\}$ is represented as a dotted line.} 
			\label{fig:spec}
	\end{figure}

	Since we do not know how far the inequality
	\[\cW_n(\rho_\alpha,\rho_\beta)\leq M_n(\alpha,\beta)\]
	can be extended outside of $-1/2\leq \alpha,\beta\leq 1/2$, our theoretical approach cannot be applied to study this case. We resort to a numerical investigation of the behavior of the corresponding Widom factors using a generalization of the Remez algorithm due to Tang \cite{tang87,tang88,fischer-modersitzki-93,komodromos-russell-tang95}. See also \cite{rubin24} for an overview on how this algorithm can be applied to the numerical study of Chebyshev polynomials. The numerical results are illustrated in Figure \ref{fig:spec}. It is clearly suggested by these plots that in the vicinity of the set specified in \eqref{eq:speculative_set}, the monotonicity properties of $\cW_n(\rho_\alpha,\rho_\beta)$ seem to shift from monotonically increasing to monotonically decreasing as we move from the inside of the disk to its exterior. Since these considerations are based on numerical experiments, we are left to speculate if this is indeed the true picture. We formulate the following conjecture.

	\begin{conjecture}
		Suppose that $\rho_\alpha, \rho_\beta\geq 0$.
		\begin{enumerate}
			\item If 
		\[\left(\rho_\alpha-\frac{1}{4}\right)^2+\left(\rho_\beta-\frac{1}{4}\right)^2< \frac{1}{8},\]
		then $\cW_n(\rho_\alpha,\rho_\beta)$ is monotonically increasing to $2^{1-\rho_\alpha-\rho_\beta}$ as $n\rightarrow \infty$.
		\item If
		\[\left(\rho_\alpha-\frac{1}{4}\right)^2+\left(\rho_\beta-\frac{1}{4}\right)^2> \frac{1.184}{8},\]
		then $\cW_n(\rho_\alpha,\rho_\beta)$ is monotonically decreasing to $2^{1-\rho_\alpha-\rho_\beta}$ as $n\rightarrow \infty$.
		\end{enumerate}
	\end{conjecture}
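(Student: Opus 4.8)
\emph{Approach.} Parts (1) and (2) sit immediately outside the two settled regions of Theorem~\ref{main:thm}, so the plan is to push each of the two proof techniques a little further. Part~(1) --- the disk, which via \eqref{eq:parameter_relation} is exactly the set $\{\alpha^2+\beta^2<1/2\}$, i.e.\ the region where the leading coefficient $c_2(\alpha,\beta)$ of \eqref{eq:c_2} is negative --- is the ``$M_n$-side'': one would extend the bound $\cW_n(\rho_\alpha,\rho_\beta)\le M_n(\alpha,\beta)$ and the monotone increase of $M_n(\alpha,\beta)$ beyond $[-1/2,1/2]^2$. Part~(2) --- the exterior of the enlarged circle, which in the remaining strips means $0<\rho_\alpha<1/2\le\rho_\beta$ (or vice versa) --- is the ``circle-side'': one would extend the identity of Theorem~\ref{thm:main_result} to these strips, after which the $z^2$-trick forces monotone decrease automatically.

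\emph{Part (1).} The first step is a Bernstein-type inequality of the shape \eqref{eq:chow-gatteschi-wong}, valid on $\{\alpha^2+\beta^2<1/2\}$ (only the portion with $\max(\alpha,\beta)>1/2$ is new) and --- crucially --- carrying the \emph{same} constant, since Lemma~\ref{lem:jacobi_estimate} shows it is only its asymptotic value that enters. One could try to re-run the uniform asymptotics of \cite{chow-gatteschi-wong94} on the enlarged range, or to splice that bound with the estimates of \cite{erdelyi-magnus-nevai94,koornwinder-kostenko-teschl18}. The second step is to extend the coefficient computation (Lemma~\ref{lem:coeff_analysis}) and show $c_1(\alpha,\beta)<0$ and $c_0(\alpha,\beta)<0$ throughout $\{\alpha^2+\beta^2<1/2,\ -1/2\le\beta\le\alpha\}$, so that the logarithmic derivative \eqref{eq:logarithmic_derivative} stays negative on $(0,\infty)$ and $M_n(\alpha,\beta)\nearrow 2^{1-\rho_\alpha-\rho_\beta}$; this is a finite, if boundary-delicate, polynomial argument now that $\alpha$ may exceed $1/2$. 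Together these give $\cW_n\le M_n\nearrow 2^{1-\rho_\alpha-\rho_\beta}$ together with $\cW_n\to 2^{1-\rho_\alpha-\rho_\beta}$, hence $\sup_n\cW_n=2^{1-\rho_\alpha-\rho_\beta}$ as in the second case of Theorem~\ref{main:thm} --- but not, by itself, monotonicity. For that one needs a matching monotone increasing \emph{lower} bound; a natural candidate is the elementary ``palindromic'' identity, valid for all $\rho_\alpha,\rho_\beta\ge0$ via $x=(z+z^{-1})/2$,
\[\cW_n(\rho_\alpha,\rho_\beta)=2^{-\rho_\alpha-\rho_\beta}\min_{\theta_1,\dots,\theta_n}\max_{|z|=1}\Bigl|(z-1)^{2\rho_\alpha}(z+1)^{2\rho_\beta}\prod_{k=1}^{n}(z-e^{i\theta_k})(z-e^{-i\theta_k})\Bigr|,\]
for which one would have to prove that the right-hand side increases with $n$ when $\rho_\alpha,\rho_\beta<1/2$. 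I expect this last point --- invisible to the $M_n$-method --- to be the main obstacle in part~(1).

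\emph{Part (2).} Here the plan is to establish Theorem~\ref{thm:main_result}, i.e.\ the identity $\cW_n(\rho_\alpha,\rho_\beta)=2^{1-\rho_\alpha-\rho_\beta}C_n$ with $C_n$ the unrestricted degree-$(2n+1)$ circle Chebyshev quantity, for $0<\rho_\alpha<1/2\le\rho_\beta$. Once it holds, multiplying the extremal degree-$(2n+1)$ polynomial by $z^2$ (which does not change the modulus on $|z|=1$) gives $C_{n+1}\le C_n$, hence $\cW_n$ monotone decreasing, with no further work. The proof of Theorem~\ref{thm:main_result} uses $\rho_\alpha,\rho_\beta\ge1/2$ only through the Erd\H{o}s--Lax/Bergman--Rubin equality \eqref{eq:third_eq}, which via Theorem~\ref{thm:bergman_rubin} requires every exponent of $(z-1)^{2\rho_\alpha}(z+1)^{2\rho_\beta}\prod(z-e^{i\varphi_k})$ to be $\ge1$; when $2\rho_\alpha<1$ one needs instead a refinement of Theorem~\ref{thm:bergman_rubin} admitting a single boundary exponent in $(0,1)$ while keeping the \emph{equality}. (The plain Bernstein inequality $\|g'\|_\infty\le\tfrac12(\deg g)\|g\|_\infty$ persists for such generalized polynomials, but only the equality makes the chain \eqref{eq:first_ineq}--\eqref{eq:third_eq} collapse.) A secondary wrinkle is that for $\rho_\alpha<1/2$ the factor $|z-1|^{2\rho_\alpha-1}$ occurring in $C_n$ has a mild, integrable pole at $z=1$, so the circle extremal is compelled to place a zero there; this forced zero must be carried through the argument, in the spirit of the remark following Theorem~\ref{main:thm} on negative parameters.

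\emph{Main obstacle and the remaining gap.} The two genuine difficulties are thus: (a) the monotonicity upgrade in part~(1), presumably through the palindromic circle identity above; and (b) the sharp Erd\H{o}s--Lax-type statement --- equality, not merely inequality --- for generalized polynomials with one sub-unit exponent in part~(2). Moreover, even granting both, the two methods leave untouched the thin annulus $1/8\le(\rho_\alpha-1/4)^2+(\rho_\beta-1/4)^2\le 1.184/8$: there $c_2(\alpha,\beta)>0$, so the $M_n$-bound is eventually \emph{decreasing}, while the pair $(\rho_\alpha,\rho_\beta)$ need not lie in $[1/2,\infty)^2$, so the circle identity of Theorem~\ref{thm:main_result} is unavailable. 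The numerics of Figure~\ref{fig:spec} place the true transition curve somewhere inside this annulus, and pinning it down appears to require a new idea.
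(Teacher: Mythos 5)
The statement you set out to prove is not a theorem of the paper but its concluding conjecture: the authors offer only numerical evidence (the Remez-type computations behind Figure \ref{fig:spec}) and the heuristic that the sign of $c_2(\alpha,\beta)$ in \eqref{eq:logarithmic_derivative} changes across $\alpha^2+\beta^2=1/2$, i.e.\ across the circle $(\rho_\alpha-1/4)^2+(\rho_\beta-1/4)^2=1/8$. So there is no proof in the paper to compare with, and your text is, as you yourself say, a programme rather than a proof. For part (1), even granting an extension of \eqref{eq:chow-gatteschi-wong} beyond $-1/2\le\alpha,\beta\le1/2$ with the same constant and an extension of Lemma \ref{lem:coeff_analysis} to $\{\alpha^2+\beta^2<1/2\}$, what you obtain is monotonicity of the upper bound $M_n(\alpha,\beta)$ and hence $\sup_n\cW_n(\rho_\alpha,\rho_\beta)=2^{1-\rho_\alpha-\rho_\beta}$; this says nothing about monotonicity of $\cW_n$ itself, which is the conjectured content and is open even for $\rho_\alpha,\rho_\beta\in(0,1/2)$, where Theorem \ref{main:thm}(2) already applies. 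Your proposed remedy, the symmetric circle identity, is correct as an identity but comes with no mechanism for monotone \emph{increase} analogous to the $z^2$ trick, and you rightly flag this as the main obstacle.

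Part (2) contains a step that would fail, not merely a gap. You propose to extend Theorem \ref{thm:main_result} to the whole strips $0<\rho_\alpha<1/2\le\rho_\beta$ (and symmetrically), after which multiplication by $z^2$ gives monotone decrease ``automatically''. But those strips meet the open disk of part (1): for instance $(\rho_\alpha,\rho_\beta)=(1/4,0.55)$ lies in the strip and satisfies $(\rho_\alpha-1/4)^2+(\rho_\beta-1/4)^2=0.09<1/8$, a parameter choice where the conjecture itself --- and the numerics of Figure \ref{fig:spec} --- assert monotone increase. Hence the identity of Theorem \ref{thm:main_result}, with its monotone-decrease consequence, cannot hold on the full strips, and the equality version of Theorem \ref{thm:bergman_rubin} for a single exponent in $(0,1)$ that your chain \eqref{eq:first_ineq}--\eqref{eq:third_eq} would require must fail on at least part of that range. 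Moreover, nothing in your route lets the threshold $1.184/8$ --- which is read off from the dotted circle in Figure \ref{fig:spec} and is not produced by any estimate in the paper --- enter the argument, so the plan cannot localize where decrease sets in. As it stands, both halves of the conjecture remain unproved, and the one concrete piece of the plan (a blanket extension of the circle identity to the strips) would prove too much.
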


	\section{Appendix}
	For the benefit of the reader we restate the definitions of the terms $c_0(\alpha,\beta)$ and $c_1(\alpha,\beta)$ as defined in \eqref{eq:c_0} and \eqref{eq:c_1}.
	
	\begin{align*}
			c_1(\alpha,\beta)&=\frac{3\alpha^3}{4}+\frac{(4\beta+8)\alpha^2}{8}+\frac{(\beta^2-1)\alpha}{4}+\frac{\beta^3}{2}+\beta^2-\frac{\beta}{4}-\frac{1}{2},\\
			c_0(\alpha,\beta)&=\frac{\alpha^4}{4}+\frac{(3\beta+6)\alpha^3}{8}+\frac{(\beta+2)^2\alpha^2}{8}+\frac{(\beta^2-1)(\beta+2)\alpha}{8}+\frac{\beta^4}{8}+\frac{\beta^3}{2}+\frac{3\beta^2}{8}-\frac{\beta}{4}-\frac{1}{4}.
	\end{align*}
	We aim to show that these are upper bounded by $0$ in the parameter domain $\{-1/2\leq \beta\leq \alpha\leq 1/2\}$. This is a necessary part of proving Lemma \ref{lem:jacobi_estimate}.
	\begin{lemma}
		If $-1/2\leq \beta \leq \alpha \leq 1/2$, then $c_k(\alpha,\beta)\leq 0$ for $k = 1,2$ with equality if and only if $|\alpha| = |\beta| = 1/2$.
		\label{lem:coeff_analysis}
	\end{lemma}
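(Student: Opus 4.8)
The plan is to freeze $\beta\in[-1/2,1/2]$ and to treat each of $c_0,c_1$ as a one-variable polynomial in $\alpha$ on the interval $\alpha\in[\beta,1/2]$; this is the natural range, since the explicit formulas for $c_0$ and $c_1$ already encode $\alpha=q=\max(\alpha,\beta)$. (The coefficient $c_2(\alpha,\beta)=\tfrac{\alpha^2}{2}+\tfrac{\beta^2}{2}-\tfrac14$ needs no work at all: $|\alpha|,|\beta|\le 1/2$ forces $c_2\le 0$, with equality precisely when $\alpha^2=\beta^2=1/4$.) For $c_0$ and $c_1$ the argument is identical in outline: (i) evaluate the polynomial at the two endpoints $\alpha=\beta$ and $\alpha=1/2$ and factor the result; (ii) compare the polynomial with the affine function $\ell$ interpolating its two endpoint values; (iii) show that the ``remaining factor'' of $8c_k-\ell$ has a fixed sign on $[\beta,1/2]$, which forces $8c_k\le\ell\le 0$.

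For (i), clearing denominators one verifies the factorizations
\[
8c_1(\beta,\beta)=4(\beta+1)(2\beta-1)(2\beta+1),\qquad 8c_1(\tfrac12,\beta)=\tfrac14(2\beta-1)(2\beta+1)(4\beta+9),
\]
\[
8c_0(\beta,\beta)=2(2\beta-1)(2\beta+1)(\beta+1)^2,\qquad 8c_0(\tfrac12,\beta)=\tfrac18(2\beta-1)(2\beta+1)(2\beta+3)(\beta+3).
\]
In each case the only factor that can change sign on $[-1/2,1/2]$ is $(2\beta-1)\le 0$, while $(\beta+1)$, $(2\beta+1)$, $(4\beta+9)$, $(2\beta+3)$, $(\beta+3)$ and $(\beta+1)^2$ are all $\ge 0$ there; hence all four endpoint values are $\le 0$, and each vanishes exactly when $\beta=\pm 1/2$.

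For (ii) and (iii), let $\ell(\alpha)$ be the affine interpolant of $8c_k$ at $\alpha=\beta$ and $\alpha=1/2$ (when $\beta=1/2$ the interval degenerates to a point and there is nothing to prove). Since $\ell$ has degree $\le 1$, the difference $8c_k-\ell$ shares with $8c_k$ all coefficients of $\alpha^j$ for $j\ge 2$, vanishes at $\alpha=\beta$ and $\alpha=1/2$, and therefore factors as $L\cdot(\alpha-\beta)(\alpha-\tfrac12)R(\alpha)$, where $L>0$ is the leading coefficient of $8c_k$ ($L=6$ for $k=1$, $L=2$ for $k=0$) and $R$ is monic of degree $\deg(8c_k)-2$. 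Vieta's formulas, using only those top coefficients, pin $R$ down: in the cubic case $R(\alpha)=\alpha-r$ with $r=-\tfrac{10\beta+11}{6}$, and on $[-1/2,1/2]$ one has $r\le -1<\alpha$, so $R(\alpha)>0$ on $[\beta,1/2]$; in the quartic case $R(\alpha)=\alpha^2+\tfrac{5\beta+7}{2}\alpha+3\beta^2+\tfrac{25}{4}\beta+\tfrac{15}{4}$, whose discriminant equals $\tfrac14(-23\beta^2-30\beta-11)$, and since $-23\beta^2-30\beta-11$ has negative discriminant and negative leading coefficient it is strictly negative, so $R$ is positive definite. In both cases $(\alpha-\beta)(\alpha-\tfrac12)\le 0$ on $[\beta,1/2]$, so $8c_k-\ell\le 0$ there, i.e. $8c_k(\alpha,\beta)\le\ell(\alpha)$; and $\ell$, being a convex combination of the two nonpositive endpoint values, is itself $\le 0$ on $[\beta,1/2]$. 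Thus $c_0,c_1\le 0$ on the closed triangle.

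Finally, the equality analysis drops out of the factored form: on $[\beta,1/2]$ the factor $R(\alpha)$ is strictly positive, so $8c_k(\alpha,\beta)=\ell(\alpha)$ only when $\alpha\in\{\beta,1/2\}$, and then $c_k(\alpha,\beta)=0$ forces the corresponding endpoint value from (i) to vanish, which by the displayed factorizations happens exactly when $\beta=\pm 1/2$ --- in which case $\alpha=\pm 1/2$ as well, so $|\alpha|=|\beta|=1/2$; conversely all such points are roots by the evaluations in (i). I expect the only genuine computation to be step (iii) for $c_0$: performing the division to identify the monic quadratic $R$ and checking that its discriminant is negative for every $\beta$. The remaining pieces --- the $c_1$ cubic, the four endpoint factorizations, and $c_2$ --- are short.
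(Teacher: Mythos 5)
Your proof is correct, and it takes a genuinely different route from the paper. The paper also reduces to the two endpoint values $c_k(\beta,\beta)$ and $c_k(1/2,\beta)$, but it gets there by calculus: for $c_0$ it shows $\partial^3_\alpha c_0\geq 3/8$, deduces convexity of $\partial_\alpha c_0$, proves $\alpha\mapsto c_0(\alpha,\beta)$ is strictly decreasing on $[\beta,0]$ and convex on $[0,1/2]$, and for $c_1$ it factors $\partial_\alpha c_1$ and argues via the sign of one factor that the maximum in $\alpha$ sits at an endpoint; the boundary values are then handled through a parametrization $t\in[0,1]$ of the edges of the triangle, equivalent to your factorizations. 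You instead compare $8c_k(\cdot,\beta)$ with its secant line $\ell$ over $[\beta,1/2]$ and factor $8c_k-\ell=L(\alpha-\beta)(\alpha-\tfrac12)R(\alpha)$, pinning $R$ down by Vieta and checking its positivity ($r=-\tfrac{10\beta+11}{6}\leq-1$ in the cubic case, discriminant $\tfrac14(-23\beta^2-30\beta-11)<0$ in the quartic case). I verified your endpoint factorizations and the identification of $R$ in both cases; they are correct, and the degenerate case $\beta=1/2$ and the equality analysis (equality forces $\alpha\in\{\beta,1/2\}$ and then $\beta=\pm1/2$, hence $|\alpha|=|\beta|=1/2$, with the converse read off from the endpoint factorizations) are handled properly. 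What your approach buys is a purely algebraic argument — no differentiation, no case split at $\alpha=0$, and a single uniform scheme for both $c_0$ and $c_1$ — at the cost of one polynomial division and a discriminant check; the paper's argument requires no division but needs a more ad hoc chain of convexity and monotonicity estimates, treated separately for the two coefficients. (One cosmetic point: the lemma's ``$k=1,2$'' is evidently intended as $k=0,1$ — $c_2\le 0$ is immediate, as you note — and your write-up covers all three coefficients, so nothing is lost.)
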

	\begin{proof}
		To show that this is indeed true, we perform a detailed analysis of the coefficients $c_0(\alpha,\beta),\, c_1(\alpha,\beta)$ on the set $\{-1/2\leq\beta\leq\alpha\leq1/2\}$ illustrated in Figure \ref{fig:domain}.
\begin{figure}
\centering
\begin{tikzpicture}[scale=5]
	 \node (r0) at ( -0.5,  -0.5) {}; 
   \node (s0) at (0.5, -0.5) {}; 
   \node (s1) at ( 0.5, 0.5) {}; 

   \fill[fill=gray] (r0.center)--(s0.center)--(s1.center);
   \path[draw] (r0.center)--(s0.center);
   \path[draw] (s0.center)--(s1.center);
   \path[draw] (s1.center)--(r0.center);
	\draw[color=black, fill=white] (r0) circle (.01);
   \draw[color=black, fill=white]  (s0) circle (.01);
   \draw[color=black, fill=white]  (s1) circle (.01);
   \draw (s1) node[right] {(1/2,1/2)};
   \draw (s0) node[below] {(1/2,-1/2)};
   \draw (r0) node[below] {(-1/2,-1/2)}; 

    	\draw[->] (-.75,0) -- (.75,0) node[right] {$\alpha$}; 
	    \draw[->] (0,-.75) -- (0,.75) node[above] {$\beta$};

	\end{tikzpicture}
					\caption{The parameter domain $\{-1/2\leq\beta\leq\alpha\leq1/2\}$.}
				\label{fig:domain}

\end{figure}
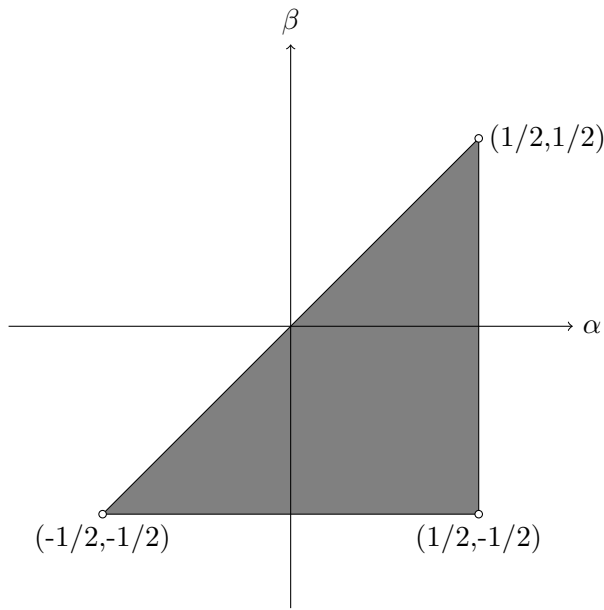
The boundary of $\{(\alpha,\beta):-1/2\leq \beta\leq \alpha\leq 1/2\}$ is parametrized with $t\in [0,1]$ through			
			\begin{align*}
				c_0(1/2,t-1/2)&=\frac{1}{8}\left(t+\frac{5}{2}\right),\left(t+1\right)t(t-1),\\
				c_0(t-1/2,-1/2)&=t(t-1)\left(\frac{1}{4}t^2+\frac{5}{16}t+\frac{1}{8}\right),\\
				c_0(t-1/2,t-1/2)&=\left(t+\frac{1}{2}\right)^2t(t-1),\\
				c_1(1/2,t-1/2)&=t(t-1)\left(\frac{t}{2}+\frac{7}{8}\right),\\
				c_1(t-1/2,-1/2)&=\frac{3}{4}t(t-1)\left(t+\frac{1}{2}\right),\\
				c_1(t-1/2,t-1/2)&=2\left(t+\frac{1}{2}\right)t(t-1).
			\end{align*}
		
			Based on these factorizations it becomes apparent that $c_0<0$ and $c_1<0$ holds for $t\in (0,1)$ but these values of $t$ are precisely those which correspond to the boundary of $\{(\alpha,\beta):-1/2\leq \beta\leq \alpha\leq 1/2\}$ with the vertices removed. 
			
			Differentiating $c_{k}(\alpha,\beta)$ for $k = 0,1$ with respect to $\alpha$ yields
			\begin{align}
				\frac{\partial}{\partial\alpha}c_0(\alpha,\beta)&= \alpha^3+\frac{9(\beta+2)}{8}\alpha^2+\frac{(\beta+2)^2}{4}\alpha-\frac{(1-\beta^2)(\beta+2)}{8},\label{eq:c_0_first}\\
				\frac{\partial^2}{\partial \alpha^2}c_0(\alpha,\beta) & = 3\alpha^2+\frac{9}{4}(\beta+2)\alpha+\frac{(\beta+2)^2}{4},\label{eq:c_0_second}\\
				\frac{\partial^3}{\partial \alpha^3}c_0(\alpha,\beta) & = 6\alpha+\frac{9}{4}(\beta+2),\label{eq:c_0_third}\\
				\frac{\partial}{\partial\alpha}c_1(\alpha,\beta)&= \frac{9}{4}\alpha^2+(\beta+2)\alpha-\frac{1-\beta^2}{4}.\label{eq:c_1_first}
			\end{align}
			
			We begin with showing that $c_0(\alpha,\beta)<0$ unless the parameters belong to the vertices of $\{(\alpha,\beta):-1/2\leq \beta\leq \alpha\leq 1/2\}$. Assuming that $\alpha,\beta\in [-1/2,1/2]$ we obtain from \eqref{eq:c_0_third} that
			\[\frac{\partial^3}{\partial \alpha^3}c_0(\alpha,\beta)=6\alpha+\frac{9}{4}(\beta+2)\geq-3-\frac{9}{8}+\frac{9}{2}=\frac{3}{8}.\]
			In particular, this implies that
			\[\alpha\mapsto\frac{\partial}{\partial\alpha}c_0(\alpha,\beta)\]
			is a convex function on $[-1/2,1/2]$ for any fixed $\beta\in [-1/2,1/2]$. Consequently, if $\beta\leq 0$ then
			\begin{equation}
				\max_{\alpha\in [\beta,0]}\frac{\partial}{\partial\alpha}c_0(\alpha,\beta)\leq \max\left\{\frac{\partial}{\partial \alpha}c_0(\beta,\beta),-\frac{(1-\beta^2)(\beta+2)}{8}\right\}.
				\label{eq:c_0_first_upper_bound}
			\end{equation}
			Clearly, the quantity
			\[-\frac{(1-\beta^2)(\beta+2)}{8}\]
			is negative if $-1/2\leq \beta\leq 1/2$. We claim that the same is true for $\frac{\partial}{\partial \alpha}c_0(\beta,\beta)$. To see this we introduce the function $h(\beta) = \frac{\partial}{\partial\alpha}c_0(\beta,\beta)$ which is given by
			\begin{align*}
			h(\beta)& = \beta^3+\frac{9}{8}(\beta+2)\beta^2+\frac{(\beta+2)^2}{4}\beta-\frac{(1-\beta^2)(\beta+2)}{8} .
			\end{align*}
			Differentiating two times we obtain
			\[h''(\beta) = 15\beta+7.\]
			It is evident that $h(\beta)$ is convex if $\beta\geq-7/15$. On the other hand, if $-1/2\leq \beta\leq -7/15$ then
			\[h'(\beta) = \frac{15}{2}\beta^2+7\beta+\frac{7}{8}\leq \frac{15}{8}-\frac{49}{15}+\frac{7}{8}<0\]
			which shows that $h$ is decreasing on $[-1/2,-7/15]$. Consequently, if $-1/2\leq \beta \leq 0$ we have
			\[\frac{\partial}{\partial\alpha}c_0(\beta,\beta) = h(\beta)\leq \max \{h(-1/2),h(0)\} =\max\left\{-\frac{1}{8},-\frac{(1-\beta^2)(\beta+2)}{8}\right\}<0.\]
			By refering to \eqref{eq:c_0_first_upper_bound} we obtain
			\[\max_{\alpha\in [\beta,0]}\frac{\partial}{\partial\alpha}c_0(\alpha,\beta)<0\]
			and therefore $c_0(\alpha,\beta)$ is strictly decreasing on $[\beta,0]$. From \eqref{eq:c_0_second} we gather that if $\beta\in [-1/2,1/2]$ then for any $\alpha\geq0$,
			\[\frac{\partial^2}{\partial \alpha^2}c_0(\alpha,\beta) = 3\alpha^2+\frac{9}{4}(\beta+2)\alpha+\frac{(\beta+2)^2}{4}\geq 0.\]
			This implies that $\alpha\mapsto c_0(\alpha,\beta)$ defines a convex function on $[0,1/2]$ and since we already concluded that $\alpha\mapsto c_0(\alpha,\beta)$ is strictly decreasing on $[\beta,0]$, we find that for any fixed $\beta\in [-1/2,1/2]$
			\[\max_{\alpha\in [\beta,1/2]}c_0(\alpha,\beta) = \max\{c_0(\beta,\beta),c_0(1/2,\beta)\}\leq 0\]
			and $c_0(\alpha,\beta) =0$ if and only if $\alpha,\beta \in \{-1/2,1/2\}$. This in particular shows that $c_0(\alpha,\beta)\leq 0$ with equality if and only if $\alpha = \beta = \pm 1/2$ or $\alpha = 1/2$ and $\beta = -1/2$.
			
			We now proceed to study the coefficient $c_1(\alpha,\beta)$ and show that $c_1(\alpha,\beta)\leq 0$ holds on the parameter domain $\{(\alpha,\beta):-1/2\leq \beta\leq \alpha\leq 1/2\}$. This turns out to be straightforward. From \eqref{eq:c_1_first} we find that
			\begin{align}&\frac{\partial}{\partial \alpha}c_1(\alpha,\beta)  = \frac{9}{4}\alpha^2+\frac{(4\beta+8)}{4}\alpha+\frac{(\beta^2-1)}{4}\\& = \frac{9}{4}\left(\alpha+\frac{2\beta+4}{9}+\sqrt{\left(\frac{2\beta+4}{9}\right)^2+\frac{1-\beta^2}{9}}\right)\left(\alpha-\frac{2\beta+4}{9}+\sqrt{\left(\frac{2\beta+4}{9}\right)^2+\frac{1-\beta^2}{9}}\right).\end{align}
			Since $-1/2\leq \beta\leq \alpha\leq 1/2$, we have
			\[\left(\alpha+\frac{2\beta+4}{9}+\sqrt{\left(\frac{2\beta+4}{9}\right)^2+\frac{1-\beta^2}{9}}\right)\geq -\frac{1}{2}+\frac{2\beta+4}{9}+\frac{2\beta+4}{9}\geq -\frac{1}{2}+2\cdot \frac{3}{9} = \frac{2}{3}-\frac{1}{2}>0\]
			and consequently the sign of $\frac{\partial}{\partial \alpha}c_1(\alpha,\beta)$ coincides with that of
			\[\left(\alpha-\frac{2\beta+4}{9}+\sqrt{\left(\frac{2\beta+4}{9}\right)^2+\frac{1-\beta^2}{9}}\right).\]
			This is sufficient to conclude that the maximum of $c_1(\alpha,\beta)$ on $[\beta,1/2]$ must be attained at an endpoint of $[\beta,1/2]$ and we gather that
			\[\max_{\alpha\in [\beta,1/2]}c_1(\alpha,\beta) = \max\{c_1(\beta,\beta),c_1(1/2,\beta)\}\leq 0\]
			with equality attained if and only if $\alpha = \beta = \pm 1/2$.\hfill\qedhere	
	\end{proof}

%

\section*{Acknowledgement}
This study originated from discussions held during a SQuaRE meeting at the American Institute of Mathematics (AIM) facility in Pasadena, CA. We extend our heartfelt gratitude to AIM for their generous hospitality and for creating such an inspiring scientific environment.

\bibliographystyle{plain}
\bibliography{references.bib}

\end{document}